\newtheorem{thm}{Theorem}[section]
\newtheorem{lem}[thm]{Lemma}
\newtheorem{prop}[thm]{Proposition}
\theoremstyle{definition}
\newtheorem{defn}[thm]{Definition}
\newtheorem{exam}[thm]{Example}
\theoremstyle{remark}
\newtheorem{rem}[thm]{Remark}
\numberwithin{equation}{section}
\newcommand{\RR}{\mathbb{R}}                
\newcommand{\TT}{\mathbb{T}}                
\newcommand{\EE}{\mathbb{E}}                
\newcommand{\espace}{\mathcal{E}}           
\newcommand{\body}{\mathcal{B}}             
\newcommand{\SO}{\mathrm{SO}}               
\newcommand{\Diff}{\mathrm{Diff}}           
\newcommand{\Isom}{\mathrm{Isom}}           
\newcommand{\Met}{\mathrm{Met}}             
\newcommand{\Vol}{\mathrm{Vol}}             
\newcommand{\Emb}{\mathrm{Emb}^{\infty}}    
\newcommand{\Vect}{\mathrm{Vect}}
\newcommand{\End}{\mathrm{End}}
\newcommand{\Cinf}{\mathrm{C}^{\infty}}
\newcommand{\bepsilon}{{\bm{\varepsilon}}}
\newcommand{\bbeta}{\bm{\eta}}
\newcommand{\bgamma}{{\bm{\gamma}}}
\newcommand{\bsigma}{{\bm{\sigma}}}
\newcommand{\btau}{{\bm{\tau}}}
\newcommand{\btheta}{{\bm{\theta}}}
\newcommand{\bomega}{{\bm{\omega}}}
\newcommand{\bOmega}{{\bm{\Omega}}}
\newcommand{\bUpsilon}{{\bm{\Upsilon}}}
\newcommand{\bLambda}{{\bm{\Lambda}}}
\newcommand{\set}[1]{\left\{#1\right\}}
\newcommand{\bq}{\mathbf{q}}                
\newcommand{\vol}{\mathrm{vol}}             
\newcommand{\bt}{\mathbf{t}}
\newcommand{\bU}{\mathbf{U}}                
\newcommand{\bV}{\mathbf{V}}                
\newcommand{\bA}{\mathbf{A}}                
\newcommand{\bM}{\mathbf{M}}                
\newcommand{\XX}{\mathbf{X}}                
\newcommand{\bC}{\mathbf{C}}
\newcommand{\bD}{\mathbf{D}}
\newcommand{\bE}{\mathbf{E}}
\newcommand{\bL}{\mathbf{L}}
\newcommand{\bR}{\mathbf{R}}
\newcommand{\bS}{\mathbf{S}}
\newcommand{\bT}{\mathbf{T}}
\newcommand{\msigma}{{\pmb{\mathfrak{S}}}}
\newcommand{\xx}{\mathbf{x}}                
\newcommand{\yy}{\mathbf{y}}
\newcommand{\vv}{\bm{v}}
\newcommand{\ww}{\bm{w}}                    
\newcommand{\ba}{\mathbf{a}}
\newcommand{\bb}{\mathbf{b}}
\newcommand{\bc}{\mathbf{c}}
\newcommand{\bd}{\mathbf{d}}
\newcommand{\bk}{\mathbf{k}}
\newcommand{\bs}{\mathbf{s}}
\newcommand{\bw}{\mathbf{w}}
\newcommand{\WW}{\bm{W}}                    
\newcommand{\VV}{\bm{V}}                    
\newcommand{\UU}{\bm{U}}                    
\newcommand{\uu}{\bm{u}}                    
\newcommand{\pp}{p}                         
\newcommand{\ee}{\bm{e}}                    
\newcommand{\bF}{\mathbf{F}}                
\newcommand{\Id}{\mathbf{Id}}
\newcommand{\dd}[2]{\frac{d{#1}}{d{#2}}}
\newcommand{\dcov}[2]{\frac{D_{#1}\,{#2}}{Dt}}
\newcommand{\dmat}[2]{\frac{d_{#1}\,{#2}}{dt}}
\DeclareMathOperator{\Lie}{L} %
\DeclareMathOperator{\tr}{tr} %
\DeclareMathOperator{\dive}{\mathbf{div}} %
\DeclareMathOperator{\Log}{Log} %
\DeclareMathOperator{\Exp}{\mathrm{Exp}} %
\begin{document}

\title[Objective rates as covariant derivatives]{Objective rates as covariant derivatives \protect\\ on the manifold of Riemannian metrics}%

\author{B. Kolev}
\address[Boris Kolev]{Université Paris-Saclay, ENS Paris-Saclay, CentraleSupélec, CNRS, LMPS - Laboratoire de Mécanique Paris-Saclay, 91190, Gif-sur-Yvette, France}
\email{boris.kolev@ens-paris-saclay.fr}

\author{R. Desmorat}
\address[Rodrigue Desmorat]{Université Paris-Saclay, ENS Paris-Saclay, CentraleSupélec, CNRS, LMPS - Laboratoire de Mécanique Paris-Saclay, 91190, Gif-sur-Yvette, France}
\email{rodrigue.desmorat@ens-paris-saclay.fr}

\date{\today}%
\subjclass[2020]{74B20; 58D17; 74A05; 74A20}
\keywords{Objective derivatives; Material time rate; Nonlinear elasticity; Manifold of Riemannian metrics; Geometric mechanics; Constitutive laws}%
\dedicatory{This work is dedicated to Professor Paul Rougée.}

\begin{abstract}
  The subject of so-called objective derivatives in Continuum Mechanics has a long history and has generated varying views concerning their true mathematical interpretation. Several attempts have been made to provide a mathematical definition that would at least partially unify the existing notions. In this paper, we demonstrate that, under natural assumptions, all objective derivatives correspond to covariant derivatives on the infinite-dimensional manifold $\mathrm{Met}(\mathcal{B})$ of Riemannian metrics on the body. Furthermore, a natural Leibniz rule enables canonical extensions from covariant to contravariant tensor fields and vice versa. This makes the sometimes-used distinction between objective derivatives of ``Lie type'' and ``co-rotational type'' unnecessary. For an exhaustive list of objective derivatives found in the literature, we exhibit the corresponding covariant derivative on $\mathrm{Met}(\mathcal{B})$.
\end{abstract}

\maketitle

\begin{small}
  \tableofcontents
\end{small}

\section{Introduction}

The re-foundation and geometrization of Continuum Mechanics was initiated by C. Truesdell and W. Noll at the beginning of the second half of the 20th century~\cite{TN1965,WT1973}. These authors have formulated a frame independent theory, using in a systematic way, intrinsic notations and defining general axioms -- among them the principle of objectivity -- from which soundly derives the finite strain theory. The starting point of this formulation is an abstract manifold of dimension 3 (with boundary) $\body$, called the \emph{body}, and \emph{equipped with a volume form} (the mass measure). The usual \emph{reference configuration} $\Omega_{0}$ is thus just the choice of a particular embedding of $\body$ into the Euclidean space $\espace$ (which usually represents the unloaded system), while the \emph{deformed/actual configuration} is an embedding of $\body$ into $\espace$ (which represents the configuration of the system, in equilibrium, after a certain loading has been applied). The geometrization of Continuum Mechanics was continued afterwards, under the impulse of Marsden and Hughes~\cite{MH1994}, and is still alive today
\cite{ES1980,SM1984,San1992,ST1994,Sve1995,GK1996,SH1997,Kad1999,NS2010,Dim2011,Fia2011,Ber2012,Ste2015,SE2020}.
The possibility to make mechanical calculations (and numerical discretizations in~\cite{Fia2015,Fia2016}) directly on the body $\body$ emerges with the formulation, by Noll~\cite{Nol1972} and later by Rougée~\cite{Rou1980}, of so-called \emph{intrinsic stresses}, and by the possibility to recast boundary conditions on the abstract manifold $\body$ (see Noll's formulation in~\cite{Nol1978}). In line with Eringen~\cite{Eri1962}, Green and Zerna~\cite{GZ1968}, Benzecri~\cite{Ben1967}, Noll~\cite{Nol1972,Nol1978}, and then Epstein and Segev~\cite{ES1980,Seg1986}, Rougée~\cite{Rou1980,Rou1991a,Rou1997,Rou2006} has furthermore rightly understood the fundamental role played in Continuum Mechanics by the \emph{manifold of Riemannian metrics} on the body $\body$. This infinite dimensional manifold, noted $\Met(\body)$, can itself be endowed with a Riemannian structure, in fact an $L^{2}$-metric. This is well-known in the framework of general relativity, where such a metric (on the manifold of Lorentzian metrics) called the \emph{Ebin metric}~\cite{Ebi1968} has been introduced. The geometry of the manifold of Riemannian metrics has been extensively studied for its own sake by many authors~\cite{Ebi1967,FG1989,GM1991,Cla2018,Cla2010,Cla2010a,Cla2013,BHM2013}. The subject is connected to General Relativity but also to the theory of the Ricci flow~\cite{Ham1982a,Ham1986} which has become famous following the work of Grigori Perelman~\cite{Per2002,Per2003,Per2003a}. It is perhaps less popular in Continuum Mechanics, and we must recognize Paul Rougée~\cite{Rou1980,Rou1991,Rou1991a,Rou1997,Rou2006} as a pioneer for having introduced such a concept in this field.

The \emph{objectivity principle} (\emph{i.e.} material frame indifference~\cite{Old1950,Nol1959,TN1965}) is nowadays a cornerstone for the formulation of rate-form constitutive equations for solids and fluids. So called objective time-derivatives (rates) of objective mechanical quantities have been proposed in the literature~\cite{Zar1903,Jau1911,Old1950,Tru1955,GN1965}, in order to introduce some kind of elasticity for viscous fluids or to derive computationally efficient formulations of finite strain elasto-(visco-)plasticity~\cite{Lad1980,Lad1999,Lub1990}. Some extensions to four-dimensional formalism can be found in~\cite{RPK2013,PRA2015}. Since a lot of objective derivatives are available in the literature, the natural question arose of how to unify them all~\cite{Hil1978}, as well as of better understanding their intrinsic nature~\cite{MH1994}. In other words, to clarify the mathematical concept underlined. A fuzzy classification into \emph{co-rotational types} (generalizing the Green--Naghdi objective rate), and \emph{non co-rotational types} (of Lie--Olroyd--Truesdell type) was sometimes made but is still unclear. Xiao and coworkers~\cite{XBM1998} have then proposed a quite general expression which includes all known co-rotational objective derivatives, but did not explain what kind of mathematical object they were. The remaining question was thus to find the underlying rigorous mathematical formulation of the concept that allows to unify them all (and eventually formulate new ones).

Marsden and Hughes did write that \emph{``All so-called objective rates of second-order tensors are in fact Lie derivatives''} \cite[Box 6.1 p. 99]{MH1994}, or more precisely, linear combinations of Lie derivatives of the (contravariant) stress tensor and of its different covariant and mixed forms (obtained by lowering subscripts thanks to the Euclidean metric). However, these ``Lie derivatives'' do not include Fiala's objective rate~\cite{Fia2004}, derived a few years later, and their statement is thus not a satisfying answer to the question.

Therefore, if objective rates are not Lie derivative, what are they ? The aim of this paper is to demonstrate that all of them are in fact covariant derivatives on $\Met(\body)$. This is how the role of the manifold of Riemannian metrics $\Met(\body)$ becomes important here. Rougée~\cite{Rou1991,Rou1991a,Rou2006} was the first to understand that the Jaumann objective rate was in fact a covariant derivative on $\Met(\body)$. Following this idea, Fiala proposed in~\cite{Fia2004} a new objective rate, which was formulated as the Riemannian covariant derivative on $\Met(\body)$ corresponding to Ebin's metric. However, in a recent paper~\cite{Fia2019}, he seems to differentiate two kinds of objective derivatives: one for strains of covariant derivative type, and one for stresses of Lie type. We don't follow this point of view. We first formulate objective derivatives for strains as covariant derivatives on the tangent bundle $T\Met(\body)$. Formally, such a covariant derivative induces a covariant derivative on the cotangent bundle $T^{\star}\Met(\body)$, which is a space of tensor-distributions. If this covariant derivative preserves tensor-distributions with density, then it induces an objective derivative for stresses. This observation seems to be new. In practice, this happens to be the case for all known objective derivatives.

The aim of the present work is twofold. First, it is to introduce in Continuum Mechanics the manifold $\Met(\body)$ of Riemannian metrics on the body $\body$ and to show that \emph{each covariant derivative on $\Met(\body)$ induces an objective derivative} on symmetric second-order covariant tensor fields (strains). Then, using Leibniz rule, we prove that, under certain conditions, this covariant derivative induces also an objective derivative on symmetric second-order contravariant tensor fields (stresses). These conditions are always satisfied when the objective derivative is \emph{local} (meaning that it depends only on the first jet of the involved variables). Moreover, in that case, we also prove the converse: each local objective derivative derives from a covariant derivative on $\Met(\body)$.

Finally, we illustrate our general statement by showing how the objective rates of the literature, including Green--Naghdi's rate, Hill's, Marsden--Hughes' and Xiao--Bruhns-Meyers' families, as well as Fiala's rate, interpret this way. Meanwhile, it was necessary to reconsider the very definition of objectivity and precise this concept from the mathematical point of view. Our discourse is focused on Continuum Mechanics but uses rather sophisticated notions from differential geometry (in infinite dimension). For the sake of self-completeness, the essential concepts are recalled in the appendices. We emphasise that, following Arnold~\cite{Arn1966,AK1998}, our goal is to concentrate on an intuitive geometric approach, mimicking what is known in finite dimensional differential geometry: by not getting into functional analysis too much but rather by working with smooth functions and formal calculations, which have to be verified on a case-by-case basis. Our goal is not to start by constructing a consistent theory of infinite-dimensional differential geometry, which encounters serious analytical difficulties. For a discussion of these questions, we redirect the readers to the following works~\cite{EM1970,Mic1980,Ham1982,Mil1984,FG1989,KM1997,IKT2013,SE2020}. Furthermore, we have chosen to work in the smooth category. It might however be possible to work in the category $C^{p}$, where $p \ge 1$, as in \cite{Seg1986,SE2020} for instance.

The outline of the paper is as follows. In~\autoref{sec:MMC}, we recall the formalism introduced by Noll to model a continuum medium. In this framework, a configuration is represented by an embedding of the body $\body$ into the Euclidean space $\espace$. In~\autoref{sec:strain-and-stress}, we recall basic materials and formulas concerning strains and stresses. The manifold of Riemannian metrics $\Met(\body)$ is introduced in~\autoref{sec:manifold-of-metrics}, where all the mathematical concepts associated with it are introduced: the Riemannian structure, covariant derivatives, and the Riemannian exponential mapping. In~\autoref{sec:objectivity}, we formulate a rigorous mathematical definition of objectivity and illustrate it by examples. Then, in~\autoref{sec:material-derivatives}, we extend this formulation to time derivatives and prove that every covariant derivative on $\Met(\body)$ defines an objective derivative. The converse is proved in~\autoref{sec:converse-theorem}, under local hypothesis. Finally in~\autoref{sec:objective-rates}, we show that all objective derivatives from the literature are special occurrences of this geometric formalism. Besides, for convenience of the reader, we have added four appendices to summarize the main mathematical concepts and formulas used in this paper, in order to fix the notations and to be as self-contained as possible.

\section{The configuration space in Continuum Mechanics}
\label{sec:MMC}

In Continuum Mechanics, the ambient space $\espace$ is represented by a three-dimensional Euclidean affine space. Designating by $\bq$ the Euclidean metric on $\espace$, it is better to consider this space as a Riemannian manifold $(\espace,\bq)$ and forget, at first, this affine structure of space. In a pictorial way, we can consider $(\espace,\bq)$\footnote{To be more accurate, the manifold $\espace$ is not ``the space'' but the \emph{typical fiber}, of dimension 3, of a fibered manifold of dimension $4$ with a Galilean structure~\cite{Kue1972,Kue1976,DK1977}.} such as an observer's three-dimensional ``laboratory notebook'' (or reference system) in which he records his observations, after choosing a unit length (the metric). The material medium is parameterized by a three-dimensional compact and orientable manifold with boundary, noted $\body$, and called the \emph{body}. This manifold $\body$ is equipped with a \emph{volume form} $\mu$, the \emph{mass measure}~\cite{TN1965}.

A \emph{configuration} of a material medium is represented by an orientation-preserving \emph{embedding}~\cite{Nol1958,Nol1972} (particles cannot occupy the same point in space) of class $C^{\infty}$
\begin{equation*}
  \pp : \body \to \espace, \qquad \XX \mapsto \xx,
\end{equation*}
sometimes referred to as a \emph{placement} in mechanics. The sub-manifold $\Omega_{\pp} = \pp(\body)$ of $\espace$
corresponds to a \emph{configuration system}. The linear tangent map $T\pp: T\body \to T\espace$ will be denoted by $\bF$.

To each embedding $\pp$ corresponds a volume form $\pp_{*}\mu$ on the configuration $\Omega_{\pp} = \pp(\body)$, which is necessarily proportional to the Riemannian volume form $\vol_{\bq}$ on $\Omega_{\pp}$. We have thus the equality
\begin{equation*}
  \pp_{*}\mu = \rho \, \vol_{\bq},
\end{equation*}
which allows to define the \emph{mass density} $\rho$ as a scalar function on the space domain $\Omega_{\pp}$.

\begin{rem}
  It is common to fix a \emph{reference configuration}
  \begin{equation*}
    \Omega_{0} = \pp_{0}(\body),
  \end{equation*}
  and thus to substitute the body $\body$ by a \emph{sub-manifold $\Omega_{0}$ of $\espace$}. In that case, one uses
  \begin{equation*}
    \varphi := \pp \circ \pp_{0}^{-1}, \qquad \Omega_{0} \to \Omega_{\pp},
  \end{equation*}
  called the \emph{deformation}~\cite{Eri1962,GZ1968,TN1965,WT1973,Lub1990,BCCF2001,Hau2002}, rather than $\pp$. Its linear tangent map $T\varphi: T\Omega_{0} \to T\Omega$ is traditionally referred to as the \emph{deformation gradient} and will be denoted by $\bF_{\varphi}$. The mass density $\rho_{0}$ on $\Omega_{0}$, defined by $(\pp_{0})_{*}\mu = \rho_{0} \, \vol_{\bq}$, is then related to $\rho$ by $\rho_{0} = (\rho \circ \varphi) \det \bF_{\varphi}$.
\end{rem}

The configuration space in Continuum Mechanics is thus the set, noted $\Emb(\body,\espace)$, of \emph{smooth embeddings} of $\body$ into $\espace$. This set can be endowed with a \emph{differential manifold structure of infinite dimension}, indeed, an open set of the \emph{Fréchet vector space} $\Cinf(\body,\espace)$~\cite{Mil1984}.

The tangent space to $\Emb(\body,\espace)$ at a point $\pp \in \Emb(\body,\espace)$ is described as follows. Let $\pp(t)$ be a curve in $\Emb(\body,\espace)$ such that $\pp(0) = \pp$, then $(\partial_{t}p)(0) = \VV$ is defined as a tangent vector at $\pp$. The tangent space at $\pp \in \Emb(\body,\espace)$ is thus the vector space
\begin{equation*}
  T_{\pp}\Emb(\body,\espace) = \set{\VV \in \Cinf(\body,T\espace);\; \pi \circ \VV = \pp},
\end{equation*}
where $\VV$ is described by the following diagram:
\begin{equation*}
  \xymatrix{
    T\body \ar[r]^{T\pp} \ar[d]_{\pi}   & T\espace \ar[d]^{\pi} \\
    \body \ar[ur]^{\VV} \ar[r]^{\pp}    & \espace }
\end{equation*}
We recognize $\VV$ as a the \emph{Lagrangian velocity} or a virtual displacement. $T\Emb(\body,\espace)$ is thus the set of \emph{virtual displacements}~\cite{ES1980}.

\begin{rem}
  Since $\Emb(\body,\espace)$ is an open set of a vector space (it is described by only one chart), its tangent bundle is trivial. Indeed, we have
  \begin{equation*}
    T\Emb(\body,\espace) = \Emb(\body,\espace) \times \Cinf(\body,\espace).
  \end{equation*}
\end{rem}

When we specify a path of embeddings $\pp(t)$, then the Lagrangian velocity at time $t$, $\VV(t) = \partial_{t} \pp (t) = \pp_{t}(t)$ belongs to the tangent space $T_{\pp(t)}\Emb(\body,\espace)$. Be careful, $\VV(t)$ \emph{is not, strictly speaking, a vector field}, neither on $\body$, nor on $\Omega_{\pp} = \pp(\body)$! However, vector fields can be constructed from a curve $\pp(t,\XX)$ and its Lagrangian velocity $\pp_{t}(t,\XX)$.
\begin{itemize}
  \item On $\body$, by setting $\UU(t,\XX) := (T\pp^{-1}. \VV)(t,\XX)$;
  \item On $\Omega_{\pp(t)} = \pp(t)(\body)$, by setting $\uu(t,\xx) := (\VV \circ \pp^{-1})(t,\xx)$.
\end{itemize}

These vector fields, respectively called \emph{left Eulerian velocity} and \emph{right Eulerian velocity} play fundamental roles in mechanics. The left Eulerian velocity (defined on the body) is involved in the Eulerian equations of the dynamics of the rigid body~\cite{Eul1765}, while the right Eulerian velocity (defined on space) is involved in fluid mechanics (and also in mechanics of deformable solids). A unified view of these concepts has been proposed by Arnold in an article dedicated to the bicentennial of the Euler equations of the rigid body~\cite{Arn1966}.

To each embedding $\pp \in \Emb(\body,\espace)$ corresponds a Riemannian metric
\begin{equation*}
  \bgamma = \pp^{*}\bq \in \Met(\body),
\end{equation*}
where $\Met(\body)$ designates the set of all the Riemannian metrics defined on $\body$. This mapping is however not injective, indeed, $ \bar{\pp} = g \circ \pp$, where $g \in \Isom(\espace,\bq)$ induces the same metric $ \bgamma = \bar{\pp}^{*}\bq$ on $\body$. Note that the Riemannian curvature of $\bgamma$ vanishes when $\body$ is of dimension 3 (this is obviously no longer true in shell theory where $\body$ is a manifold of dimension 2).

\section{Strains and stresses}
\label{sec:strain-and-stress}

A motion in Continuum Mechanics corresponds to a curve $\pp(t)$ in $\Emb(\body,\espace)$~\cite{Nol1958,Nol1972} (a \emph{path of embeddings}). To this motion is associated a Lagrangian velocity
\begin{equation*}
  \partial_{t}p(t,\XX) = \VV(t,\XX),
\end{equation*}
together with right and left Eulerian velocities
\begin{equation*}
  \uu(t,\xx) = \VV(t,\pp^{-1}(t,\xx)), \qquad \UU(t,\XX) = (T_{\XX}\pp)^{-1}.\VV(t,\XX),
\end{equation*}
and a \emph{strain rate}, a second-order symmetric covariant tensor field on $\Omega_{\pp}$
\begin{equation}\label{eq:strain-rate}
  \bd := \frac{1}{2} \Lie_{\uu} \bq ,
\end{equation}
where $\Lie_{\uu}$ is the Lie derivative with respect to the (right) Eulerian velocity $\uu$.

\begin{rem}
  Traditionally, the strain rate is introduced using its mixed form $\widehat{\bd} := \bq^{-1} \bd$, which writes as
  \begin{equation*}
    \widehat{\bd} = \frac{1}{2} \left( \nabla \uu + (\nabla \uu)^{t}\right),
  \end{equation*}
  where $\nabla \uu$ is the covariant derivative of the Eulerian velocity $\uu$ and $(\nabla \uu)^{t}$ is the transpose (relative to the metric $\bq$, see \autoref{sec:second-order-tensors}) of the linear operator $\ww \mapsto \nabla_{\ww} \uu$. The connection between these two expressions results from the formula
  \begin{equation*}
    2 \bd = \Lie_{\uu} \bq = \bq \nabla \uu + \bq(\nabla \uu)^{t} = 2\bq \widehat{\bd},
  \end{equation*}
  which can be deduced from proposition~\ref{prop:Lie-derivative-versus-covariant-derivative}. We have furthermore
  \begin{equation*}
    \bd = D \uu^\flat,
    \qquad
    \uu^\flat=\bq \uu,
  \end{equation*}
  where the operator
  \begin{equation}\label{eq:DZ}
    D\alpha  (X,Y) = \frac{1}{2} \left( (\nabla_X \alpha(Y) + (\nabla_Y \alpha)(X)\right)
  \end{equation}
  is the \emph{formal adjoint} of the divergence of a second-order symmetric tensor field.
\end{rem}

The following result, where $\bgamma_{t}=\partial_{t} \bgamma$, is a direct consequence of theorem~\ref{lem:fundamental-time-derivative-formula}.

\begin{thm}[Rougée, 1980, 1991]\label{theo:metric-strain-rate}
  Along a deformation path $\pp(t)$ of embeddings in $\Emb(\body,\espace)$, the Riemannian metric over $\body$, $\bgamma(t) = \pp(t)^{*}\bq$, satisfies the evolution equation
  \begin{equation*}
    \bgamma_{t} = 2 \pp^{*}\bd.
  \end{equation*}
\end{thm}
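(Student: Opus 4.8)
The plan is to differentiate the defining relation $\bgamma(t) = \pp(t)^{*}\bq$ with respect to $t$ and identify the result. The key observation is that the Euclidean metric $\bq$ is a fixed tensor on $\espace$ (it does not depend on $t$), so the only $t$-dependence in $\bgamma(t)$ comes through the embedding $\pp(t)$. Thus I expect $\bgamma_{t}$ to be the pull-back of the "infinitesimal change" of $\pp^{*}\bq$ induced by the Lagrangian velocity $\VV = \partial_{t}\pp$, and the heart of the matter is to recognize this infinitesimal change as $\Lie_{\uu}\bq$ pulled back to $\body$, where $\uu = \VV \circ \pp^{-1}$ is the right Eulerian velocity.

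First I would invoke theorem~\ref{lem:fundamental-time-derivative-formula} (the fundamental time-derivative formula), as the statement explicitly says the result is a direct consequence of it. Applied to the constant path $t \mapsto \bq$ on $\espace$ and the time-dependent pull-back by $\pp(t)$, that formula should yield
\begin{equation*}
  \partial_{t}\bigl(\pp(t)^{*}\bq\bigr) = \pp(t)^{*}\bigl(\partial_{t}\bq + \Lie_{\uu(t)}\bq\bigr) = \pp(t)^{*}\bigl(\Lie_{\uu(t)}\bq\bigr),
\end{equation*}
using $\partial_{t}\bq = 0$. Then I would substitute the definition of the strain rate $\bd = \tfrac{1}{2}\Lie_{\uu}\bq$ from~\eqref{eq:strain-rate}, obtaining
\begin{equation*}
  \bgamma_{t} = \pp^{*}\bigl(\Lie_{\uu}\bq\bigr) = \pp^{*}(2\bd) = 2\,\pp^{*}\bd,
\end{equation*}
which is exactly the claimed evolution equation.

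If instead one wants a self-contained argument not leaning on theorem~\ref{lem:fundamental-time-derivative-formula}, I would argue pointwise: fix $\XX \in \body$ and tangent vectors $\bU, \bV \in T_{\XX}\body$, write $\bgamma(t)(\bU,\bV) = \bq\bigl(T\pp(t)\cdot\bU,\, T\pp(t)\cdot\bV\bigr)$, and differentiate under $\bq$ (legitimate since $\bq$ is bilinear and $t$-independent). This produces terms involving $\partial_{t}\bigl(T\pp(t)\cdot\bU\bigr)$, which one rewrites using the commutation of $\partial_{t}$ with the spatial differential $T$ as $T\VV(t)\cdot\bU$, and then transports everything to $\Omega_{\pp}$ via $\pp^{-1}$ to recognize the symmetrized covariant derivative $\nabla\uu + (\nabla\uu)^{t}$, i.e.\ $\Lie_{\uu}\bq$, using proposition~\ref{prop:Lie-derivative-versus-covariant-derivative} exactly as in the remark following~\eqref{eq:strain-rate}.

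The main obstacle is purely bookkeeping: keeping straight the three velocity fields ($\VV$ Lagrangian, $\uu$ right Eulerian on $\Omega_{\pp}$, $\UU$ left Eulerian on $\body$) and being careful that $\VV(t)$ is \emph{not} a genuine vector field, so that the naive formula "$\partial_{t}$ of a pull-back $=$ pull-back of a Lie derivative" must be the version for time-dependent pull-backs along a path of embeddings — precisely what theorem~\ref{lem:fundamental-time-derivative-formula} is designed to provide. Once that formula is in hand, the proof is a one-line substitution; the only subtlety is confirming that the vector field entering the Lie derivative there is the right Eulerian velocity $\uu$ (defined on space) and not the left one, which is why $\bd$ — a tensor on $\Omega_{\pp}$ — appears pulled back by $\pp$ rather than $\bgamma_{t}$ being expressed directly through $\UU$ on $\body$.
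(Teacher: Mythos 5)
Your proposal is correct and follows exactly the route the paper intends: the paper gives no separate proof but states the theorem as ``a direct consequence of theorem~\ref{lem:fundamental-time-derivative-formula}'', which is precisely your one-line substitution $\partial_{t}(\pp^{*}\bq) = \pp^{*}(\partial_{t}\bq + \Lie_{\uu}\bq) = \pp^{*}(2\bd)$ using $\partial_{t}\bq = 0$ and definition~\eqref{eq:strain-rate}. Your additional pointwise argument and the remarks about distinguishing $\VV$, $\uu$ and $\UU$ are accurate but not needed beyond what the cited lemma already supplies.
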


\begin{rem}\label{rem:D-body}
  Setting $\bD := \pp^{*}\bd$ and $\bgamma:= \pp^{*}\bq$, we have
  \begin{equation*}
    \bD = \frac{1}{2} \bgamma\left(\nabla^{\bgamma} \UU + (\nabla^{\bgamma} \UU\right)^{t}) = D^\bgamma \UU^\flat ,
  \end{equation*}
  where $\UU$ is the left Eulerian velocity, $\UU^\flat=\bgamma \UU$, and where $D^\bgamma$ is the formal adjoint of the divergence operator relative to the metric $\bgamma$.
\end{rem}

Given a reference configuration $\pp_{0}: \body \to \Omega_{0}$, the \emph{deformation} $\varphi$ writes $\varphi = \pp\circ \pp_{0}^{-1}$. Then, the \emph{right Cauchy--Green tensor} is defined on the reference configuration $\Omega_{0} = p_{0}(\body)$ by
\begin{equation}\label{eq:right-Cauchy-Green-tensor}
  \bC := \varphi^{*}\bq=\bF_{\varphi}^{\star}\bq \,\bF_{\varphi} = \bq \bF_{\varphi}^{t} \bF_{\varphi},
\end{equation}
and the \emph{left Cauchy--Green tensor}, on the deformed configuration $\Omega_{p} = p(\body)$, by
\begin{equation}\label{eq:left-Cauchy-Green-tensor}
  \bb := \varphi_{*}\bq^{-1} = \bF_{\varphi}\bq^{-1}\bF_{\varphi}^{\star} = \bF_{\varphi} \bF_{\varphi}^{t} \bq^{-1}.
\end{equation}

\begin{rem}\label{rem:CG-pullbacks}
  The Cauchy--Green tensors are related to the metrics $\bgamma$ and $\bgamma_{0}$ by
  \begin{equation*}
    \pp_{0}^{*}\bC = \pp_{0}^{*}\varphi^{*}\bq = \pp^{*}\bq = \bgamma ,
  \end{equation*}
  and
  \begin{equation*}
    p^{*}\bb = p^{*}\varphi_{*}\bq^{-1} = \pp_{0}^{*}\bq^{-1} = \bgamma_{0}^{-1} .
  \end{equation*}
\end{rem}

The notion of stress is dual to that of deformation. The most common concept is the one of \emph{Cauchy stress tensor} $\bsigma$, defined on the configuration $\Omega_{\pp} = \pp(\body)$ (a symmetric contravariant tensor field). To this dual concept of deformations is associated a linear functional (which corresponds to the opposite of the \emph{virtual work of internal forces})
\begin{equation*}
  \mathcal{P}(\ww) = \int_{\Omega} (\bsigma : D \ww^\flat) \, \vol_{\bq} = \int_{\Omega} (\btau : D \ww^\flat )\, \rho \, \vol_{\bq},
  \qquad
  D \ww^\flat=\frac{1}{2} \Lie_{\ww} \bq,
\end{equation*}
where the double-dots means the double contraction ($\ba :\bb = a^{ij}b_{ij}$), $\ww\in T\Omega$ is an Eulerian virtual velocity, $\ww^\flat=\bq \ww$, and where $\btau := \bsigma /\rho$ is the \emph{Kirchhoff stress tensor}. A geometrical interpretation of the stresses in terms of virtual work can be found in~\cite{ES1980,SR1999}.

This leads us to introduce two \emph{stress tensors on the body}, the \emph{Noll stress tensor}~\cite{Nol1972,Nol1978}
\begin{equation*}
  \msigma := \pp^{*}\bsigma ,
\end{equation*}
and the \emph{Rougée stress tensor}~\cite{Rou1980,Rou1991}
\begin{equation}\label{eq:theta}
  \btheta := \pp^{*}\btau ,
\end{equation}
which are both contravariant and symmetric, and appear naturally when writing down the push-forward of the virtual work of internal forces on the body
\begin{equation*}
  \mathcal{P}(\WW) = \int_{\body} (\msigma : D^\bgamma \WW^\flat ) \, \vol_{\bgamma} = \int_{\body} (\btheta : D^\bgamma \WW^\flat ) \, \mu,
\end{equation*}
where $\WW=\pp^{*} \ww$ is a left Eulerian virtual velocity, $\WW^\flat=\bgamma \WW$, and the operator $D^\bgamma$ has been defined in remark~\ref{rem:D-body}.

\begin{rem}
  When one works with a reference configuration $\Omega_{0}$, rather than $\body$, and if we introduce the \emph{deformation} $\varphi = \pp\circ \pp_{0}^{-1}$, then, the \emph{second Piola--Kirchhoff tensor} $\bS$, symmetric and contravariant, is both the pull-back of Kirchhoff stress by $\varphi$ and the push-forward of Rougée's stress by $\pp_{0}$,
  \begin{equation*}
    \bS := \varphi^{*}\btau =\pp_{0*} \btheta
  \end{equation*}
  It is defined on the reference configuration $\Omega_{0}$.
\end{rem}

\section{The manifold of Riemannian metrics}
\label{sec:manifold-of-metrics}

The set $\Met(\body)$ of all Riemannian metrics defined on $\body$ thus acquires some importance in Continuum Mechanics. The reader may refer to the work of Rougée~\cite{Rou1997,Rou2006} (see also~\cite{KD2019,KD2021}) for a geometrical formulation of hyper-elasticity such as a vector field on $\Met(\body)$, in other words, as a section $F: \Met(\body) \to T\Met(\body)$ of the tangent vector bundle $T\Met(\body)$. It turns out that $\Met(\body)$ is itself a Fréchet manifold. It is in fact an \emph{open convex set} of the infinite dimensional vector space (a Fréchet space but not a Banach space)
\begin{equation*}
  \Gamma(S^{2}T^{\star}\body),
\end{equation*}
of smooth sections of the vector bundle of covariant symmetric tensors of order two. The tangent space $T_{\bgamma}\Met(\body)$ is canonically identified with the vector space $\Gamma(S^{2}T^{\star}\body)$ and the tangent vector bundle $T\Met(\body)$ is trivial,
\begin{equation*}
  T\Met(\body) = \Met(\body) \times \Gamma(S^{2}T^{\star}\body).
\end{equation*}

The cotangent bundle $T^{\star}\Met(\body)$ of $\Met(\body)$ writes
\begin{equation*}
  T^{\star}\Met(\body) = \Met(\body) \times \Gamma(S^{2}T^{\star}\body)^{\star},
\end{equation*}
where $(\Gamma(S^{2}T^{\star}\body))^{\star}$ is the space of \emph{tensor-distributions}, \textit{i.e.} continuous linear functionals on $\Gamma(S^{2}T^{\star}\body)$ (with compact support).

\begin{rem}
  The general concept of tensor-distributions seems to have been introduced by Lichnerowicz~\cite{Lic1994} and extends \emph{Schwartz distributions} from functions to tensor fields. When these tensor fields are covariant and alternate (\textit{i.e.} differential forms) they are called \emph{de Rham's currents}. Surprisingly, currents (whose origin comes from physics) have been discovered before Schwartz's distributions and have inspired him to formulate his theory~\cite{Luet1982}.
\end{rem}

From the mechanical point of view, an element $\mathcal{P}_{\bgamma}$ of $T_{\bgamma}^{\star}\Met(\body)$ can be interpreted as the virtual work of internal forces. Among this large space of tensor-distributions, there is an important subset of them which write as
\begin{equation*}
  \mathcal{P}_{\bgamma}(\bepsilon) = \int_{\body} (\btheta : \bepsilon) \mu,
\end{equation*}
where $\btheta$ is a contravariant second-order tensor field on $\body$ and the double-dots means the double contraction. These particular distributions will be called \emph{distributions with density}.

\begin{rem}
  The correct setting for studying these infinite dimensional manifolds of smooth mappings~\cite{Mic1980} seems to be what is now called \emph{convenient calculus}~\cite{KM1997,Bru2018}, and has been extensively studied these last four decades. Thanks to the concept of \emph{Frölicher spaces}~\cite{KM1997} (a more restrictive concept than \emph{Diffeology}~\cite{Igl2013}, but more adapted to our needs \cite{SE2020}), the definition of \emph{differentiability} on manifolds of smooth mappings has become, nowadays, less tricky. For instance, it is now possible to overcome some difficulties which were inherent in the theory of Fréchet spaces. This last category is not nice from the point of view of analysis, because in general the topological dual of a Fréchet vector space (like $T_{\bgamma}^{\star}\Met(\body)$) or the space of continuous linear mappings between two Fréchet vector spaces are not Fréchet spaces~\cite{Ham1982}. They are, however, still Frölicher spaces~\cite{FK1988,KM1997}.
\end{rem}

\subsection{Weak Riemannian structure on $\Met(\body)$}
\label{subsec:weak-Riemannian-structure}

The manifold $\Met(\body)$ of Riemannian metrics can be equipped itself with a natural Riemannian structure, by setting
\begin{equation}\label{eq:Rougee-metric}
  G^{\mu}_{\bgamma}(\bepsilon^{1}, \bepsilon^{2}) := \int_{\body} \tr(\bgamma^{-1}\bepsilon^{1} \bgamma^{-1}\bepsilon^{2}) \, \mu, \qquad \bepsilon^{1}, \bepsilon^{2} \in T_{\bgamma}\Met(\body),
\end{equation}
where $\tr(\bgamma^{-1}\bepsilon^{1} \bgamma^{-1}\bepsilon^{2}) = \gamma^{ij}\varepsilon^{1}_{ik} \gamma^{kl}\varepsilon^{2}_{jl}$, in a local coordinate system. This (meta-)metric~\eqref{eq:Rougee-metric} was introduced by Rougée~\cite{Rou1997,Rou2006} and seems well adapted for the geometrical formulation of Cauchy elasticity in finite strains. For this reason, we will call it the \emph{Rougée metric}.

\begin{rem}
  In~\eqref{eq:Rougee-metric}, a point $\XX \in \body$ being fixed, $\tr(\bgamma^{-1}(\XX)\bepsilon^{1}(\XX) \bgamma^{-1}(\XX)\bepsilon^{2}(\XX))$ corresponds to a scalar product on the finite dimensional space of symmetric matrices and \eqref{eq:Rougee-metric} can be interpreted as the ``mean value'' of these scalar products. In several papers in solid mechanics (including Rougée's papers) the integral is omitted, and as noted by Fiala~\cite{Fia2004}, this lead to some confusion regarding the problem of on which space the metric is defined. From the mathematical point of view, the geometry associated with this metric is nevertheless somehow \emph{pointwise}, as emphasized in~\cite{Bru2018}. The reason of this confusion (which is present in Rougée's paper~\cite{Rou2006}) is the fact that, besides more recent works on the manifold of Riemannian metrics, there is also an important literature on the geometry of positive definite matrices (see for instance the book~\cite{Bha2009}) and it is easy to confuse both concepts. The metrics discussed in our paper are obtained by a straightforward integration of the corresponding metric on the (finite dimensional) manifold of positive definite matrices.
\end{rem}

In general relativity, where no volume form is defined \textit{a priori}, one uses a variant of this metric, the \emph{Ebin metric}~\cite{Ebi1968,FG1989,Cla2010} defined as
\begin{equation}\label{eq:Ebin-metric}
  G_{\bgamma}(\bepsilon^{1}, \bepsilon^{2}) := \int_{\body} \tr(\bgamma^{-1}\bepsilon^{1} \bgamma^{-1}\bepsilon^{2}) \, \vol_{\bgamma}, \qquad \bepsilon^{1}, \bepsilon^{2} \in T_{\bgamma}\Met(\body).
\end{equation}
where $\vol_{\bgamma}$ is the Riemannian volume associated with the metric $\bgamma$.

\begin{rem}
  An important property of the Ebin metric is that it is invariant by the diffeomorphism group $\Diff(\body)$. More specifically:
  \begin{equation*}
    G_{\varphi^{*}\bgamma}(\varphi^{*}\bepsilon^{1}, \varphi^{*}\bepsilon^{2}) = G_{\bgamma}(\bepsilon^{1}, \bepsilon^{2}), \qquad \forall \varphi \in \Diff(\body).
  \end{equation*}
  Contrary to the Ebin metric $G$, the Rougée metric $G^{\mu}$ is not invariant by the diffeomorphism group $\Diff(\body)$ but only by its subgroup
  \begin{equation*}
    \Diff_{\mu}(\body) := \set{\varphi \in \Diff(\body); \; \varphi^{*}\mu = \mu},
  \end{equation*}
  of diffeomorphisms which preserve the volume form $\mu$ (the mass measure).
\end{rem}

These Riemannian structures on $\Met(\body)$ are relatively well understood and have been intensively studied~\cite{Ebi1967,Ebi1968,FG1989,GM1991,Cla2010}. There are, however, important differences between Riemannian geometry on a finite dimensional manifold and on an infinite dimensional manifold. A Riemannian metric $G$ defined on a manifold $M$ (of finite or infinite dimension) induces a mapping
\begin{equation*}
  G : TM \to T^{\star}M
\end{equation*}
which is injective (a metric is, at each point, a non-degenerate symmetric bilinear form). If $M$ is of finite dimension this mapping is necessarily bijective but this is no longer necessarily true in infinite dimension. We will then distinguish a \emph{strong} Riemannian metric (if $G$ is bijective) from a \emph{weak} Riemannian metric (if $G$ is only injective).

The Rougée metric induces a linear injective (but not surjective) mapping
\begin{equation*}
  T_{\bgamma}\Met(\body) \to T_{\bgamma}^{\star}\Met(\body), \qquad \bbeta \mapsto G^{\mu}_{\bgamma}(\bbeta,\cdot),
\end{equation*}
whose range corresponds to \emph{distributions with density}. In other words, an element $\mathcal{P}_{\bgamma}$ belongs to this range if it writes
\begin{equation*}
  \mathcal{P}_{\bgamma}(\bepsilon) = \int_{\body} (\btheta : \bepsilon) \mu, \quad \text{where} \quad \btheta = \bgamma^{-1} \bbeta \bgamma^{-1},
\end{equation*}
for some $\bbeta \in T_{\bgamma}\Met(\body)$, defining on the body the \emph{Rougée stress tensor} $\btheta$, according to \eqref{eq:theta}, as the density of the distribution $\mathcal{P}_{\bgamma}$. An elasticity law (in the Cauchy sense) writes thus as
\begin{equation}\label{eq:elastic-law-in-the-body}
  \btheta = \bgamma^{-1} F(\bgamma) \bgamma^{-1},
\end{equation}
where $F$ is a vector field on $\Met(\body)$. This formula is better understood using the following diagram
\begin{equation*}
  \xymatrix{
  T\Met(\body) \ar[r]^{G^{\mu}} \ar[d]^{\pi} & T^{\star}\Met(\body) \\
  \Met(\body) \ar@/^1pc/[u]^{F} \ar[ur]_{\quad \btheta = \bgamma^{-1} F(\bgamma) \bgamma^{-1}} &  }
\end{equation*}
By \emph{push-forward} on $\Omega=\pp(\body)$, we recover the Cauchy stress tensor
\begin{equation}\label{eq:elastic-law-in-space}
  \bsigma = \rho \, \pp_{*}\btheta = \rho \, \bq^{-1} \pp_{*}(F(\pp^{*}\bq))\bq^{-1}.
\end{equation}

\subsection{Riemannian product structure on $\Met(\body)$}
\label{subsec:Riemannian-product-structure}

Finally, we will describe a natural \emph{Riemannian product structure} on $(\Met(\body), G^{\mu})$ which has a strong mechanical signification. Note first that $\Met(\body)$ being an open set of $\Gamma(S^{2}T^{\star}\body)$, an element $\bgamma \in \Met(\body)$ can also be seen as an element of $T_{\bgamma}\Met(\body)$ and we can thus consider the mapping
\begin{equation*}
  \bgamma \mapsto \bgamma, \qquad \Met(\body) \to T\Met(\body)
\end{equation*}
as a section of the tangent vector bundle, \textit{i.e.} a vector field on $\Met(\body)$. Therefore, we can write
\begin{equation}\label{eq:dec-tangent-space}
  T_{\bgamma}\Met(\body) = \Cinf(\body) \bgamma \oplus (\Cinf(\body) \bgamma)^{\bot},
\end{equation}
where orthogonality refers either to Ebin's metric~\eqref{eq:Ebin-metric}, or to Rougée's metric~\eqref{eq:Rougee-metric}. In both cases, the space $(\Cinf(\body) \bgamma)^{\bot}$ writes
\begin{equation*}
  (\Cinf(\body) \bgamma)^{\bot} = \set{\bepsilon \in \Gamma(S^{2}T^{\star}\body);\; \tr(\bgamma^{-1}\bepsilon) = 0},
\end{equation*}
and we have the following result.

\begin{lem}
  Let $\pp \in \Emb(\body,\espace)$, $\bk$ a second-order covariant tensor field on $\Omega_{\pp} = \pp(\body)$ and
  \begin{equation*}
    \bk =  \bk^{H} + \bk^{D},
  \end{equation*}
  its decomposition into a \emph{spherical part} (or hydrostatic part) and a {deviator} (with respect to the metric $\bq$ on $\espace$). So, the pull-back of this decomposition
  \begin{equation*}
    \pp^{*}\bk =  \pp^{*}\bk^{H} + \pp^{*}\bk^{D}
  \end{equation*}
  corresponds to the orthogonal decomposition
  \begin{equation*}
    T_{\bgamma}\Met(\body) = \Cinf(\body) \bgamma \oplus (\Cinf(\body) \bgamma)^{\bot}.
  \end{equation*}
\end{lem}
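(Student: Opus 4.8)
The plan is to reduce the claim to a pointwise statement about symmetric bilinear forms, then transport it through the pull-back by $\pp$. Fix $\pp \in \Emb(\body,\espace)$ and write $\bgamma = \pp^{*}\bq = \mathcal{N}(\pp)$. At a point $\XX \in \body$, the linear tangent map $\bF = T_{\XX}\pp \colon T_{\XX}\body \to T_{\xx}\espace$ is an isomorphism intertwining $\bgamma(\XX)$ and $\bq(\xx)$, in the sense that $\bgamma(\XX)(U,V) = \bq(\xx)(\bF U, \bF V)$. Consequently the pull-back $\pp^{*}$ is, fiberwise, a linear isometry from $(S^{2}T^{\star}_{\xx}\espace, \bq\text{-inner product})$ onto $(S^{2}T^{\star}_{\XX}\body, \bgamma\text{-inner product})$, where by the $\bq$-inner product on symmetric $2$-forms I mean $\bk_{1} : \bk_{2} \mapsto \tr(\bq^{-1}\bk_{1}\bq^{-1}\bk_{2})$ evaluated at $\xx$, and similarly with $\bgamma$. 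The key algebraic fact is that under such an isometry the one-dimensional line spanned by the metric is preserved: $\pp^{*}\bq = \bgamma$, so $\pp^{*}(f \bq) = (f\circ\pp)\,\bgamma$ for $f \in \Cinf(\Omega_{\pp})$, and the $\bq$-orthogonal complement of $\RR\,\bq(\xx)$ inside $S^{2}T^{\star}_{\xx}\espace$ is carried onto the $\bgamma$-orthogonal complement of $\RR\,\bgamma(\XX)$.

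Now recall that the spherical (hydrostatic) part of $\bk$ with respect to $\bq$ is, at each point, the $\bq$-orthogonal projection of $\bk(\xx)$ onto $\RR\,\bq(\xx)$, namely $\bk^{H} = \tfrac{1}{n}\tr(\bq^{-1}\bk)\,\bq$ with $n = 3$, and the deviator $\bk^{D} = \bk - \bk^{H}$ is characterized by $\tr(\bq^{-1}\bk^{D}) = 0$, equivalently by being $\bq$-orthogonal to $\bq$. Applying $\pp^{*}$ and using that it intertwines the metrics fiberwise: $\pp^{*}\bk^{H} = \tfrac{1}{n}(\tr(\bq^{-1}\bk)\circ\pp)\,\bgamma$, which lies in $\Cinf(\body)\,\bgamma$, while $\tr(\bgamma^{-1}\,\pp^{*}\bk^{D}) = (\tr(\bq^{-1}\bk^{D}))\circ\pp = 0$ (pull-back commutes with the trace pairing since $\pp^{*}(\bq^{-1}) = \bgamma^{-1}$ and $\pp^{*}$ is a ring homomorphism on tensors at each point), so $\pp^{*}\bk^{D} \in (\Cinf(\body)\,\bgamma)^{\bot}$. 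Since $\pp^{*}\bk = \pp^{*}\bk^{H} + \pp^{*}\bk^{D}$ and the decomposition \eqref{eq:dec-tangent-space} is a direct sum, uniqueness forces $\pp^{*}\bk^{H}$ and $\pp^{*}\bk^{D}$ to be exactly the two components of $\pp^{*}\bk$ in that decomposition. This is what had to be shown.

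I expect no serious obstacle here; the statement is essentially the observation that pull-back by an embedding is a fiberwise isometry of symmetric-$2$-tensor bundles and therefore respects any metric-defined orthogonal splitting. The only points requiring a little care are: (i) checking that $\pp^{*}$ really does carry the $\bq$-pointwise inner product to the $\bgamma$-pointwise inner product — this follows from $\pp^{*}\bq = \bgamma$, $\pp^{*}(\bq^{-1}) = \bgamma^{-1}$, and naturality of $\tr(\,\cdot\,)$ under pull-back; and (ii) noting that the orthogonality in \eqref{eq:dec-tangent-space}, whether taken with respect to Rougée's metric $G^{\mu}$ or Ebin's metric $G$, reduces at each point to $\bgamma$-orthogonality of symmetric matrices, so the integral (with whichever volume density) is irrelevant to the splitting — exactly as recorded just before the lemma, where $(\Cinf(\body)\bgamma)^{\bot}$ is described by the pointwise condition $\tr(\bgamma^{-1}\bepsilon) = 0$.
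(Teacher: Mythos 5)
Your proof is correct; the paper itself states this lemma without proof, and your argument supplies exactly the computation the authors take for granted. The two facts you isolate — that $\pp^{*}$ sends $f\bq$ to $(f\circ\pp)\,\bgamma$ because $\pp^{*}\bq=\bgamma$, and that $\tr(\bgamma^{-1}\pp^{*}\bk^{D})=\pp^{*}\bigl(\tr(\bq^{-1}\bk^{D})\bigr)=0$ because pull-back commutes with contraction — together with uniqueness of the direct-sum decomposition, are precisely what is needed, and your remark that the orthogonality in the decomposition is pointwise (so the choice of volume density is irrelevant) correctly matches the description of $(\Cinf(\body)\bgamma)^{\bot}$ given just before the lemma.
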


To this decomposition of the tangent bundle~\eqref{eq:dec-tangent-space} corresponds a Riemannian manifold product structure of $(\Met(\body),G^{\mu})$. Indeed, let $\Vol(\body)$ be the set of volume forms on $\body$, which is an open positive cone in $\Omega^{3}(\body)$. Given a volume form $\mu$ on $\body$, we define the sub-manifold
\begin{equation*}
  \Met_{\mu}(\body) := \set{\bgamma \in \Met(\body);\; \vol_{\bgamma} = \mu}
\end{equation*}
of $\Met(\body)$ and the mapping
\begin{equation*}
  \Psi_{\mu} : \Vol(\body) \times \Met_{\mu}(\body) \to \Met(\body), \qquad (\nu, \bgamma) \mapsto (\nu/\mu)^{2/3}\bgamma.
\end{equation*}
Then, the mapping $\Psi_{\mu}$ is a Riemannian isometry if we endow $\Met(\body)$ and its submanifold $\Met_{\mu}(\body)$ with the Rougée metric $G^{\mu}$, and the manifold $\Vol(\body)$ with the following Riemannian metric
\begin{equation*}
  G^{\mu}_{\nu}(\omega_{1}, \omega_{2}) := \frac{4}{3}\int_{\body} \left(\frac{\omega_{1}}{\nu}\right)\left(\frac{\omega_{2}}{\nu}\right)\, \mu.
\end{equation*}

\begin{rem}
  A similar decomposition exists for the Riemannian manifold $(\Met(\body),G)$, where $G$ is the Ebin metric~\cite{Cla2010}, but then, one needs to endow $\Vol(\body)$ with the following Riemannian metric
  \begin{equation*}
    G_{\nu}(\omega_{1}, \omega_{2}) := \frac{4}{3}\int_{\body} \left(\frac{\omega_{1}}{\nu}\right)\left(\frac{\omega_{2}}{\nu}\right)\, \nu.
  \end{equation*}
\end{rem}

\subsection{Covariant derivatives on $T\Met(\body)$}
\label{subsec:covariant-derivative-on-TMet}

On a Fréchet vector space, only the notion of directional derivative (or derivative along a curve) makes sense. On $\Gamma(S^{2}T^{\star}\body)$, this one is written
\begin{equation}\label{eq:canonical-covariant-derivative}
  \partial_{t} \bepsilon := \partial_{t}\bepsilon(t,\XX),
\end{equation}
where $\bepsilon$ is a time-dependent tensor field and the partial derivative with respect to $t$ occurs in each (finite dimensional) vector space $T_{\XX}^{\star}\body$.

More generally, a covariant derivative on $T\Met(\body)$ is a linear operator $D$ that associates to each vector field $\bepsilon(t)$ defined along a curve $\bgamma(t) \in \Met(\body)$ (\textit{i.e.}, $\pi \circ \bepsilon(t) = \bgamma(t)$), a vector field $D_{t}\bepsilon$ defined along $\bgamma(t)$ and which satisfies the Leibniz rule
\begin{equation*}
  D_{t}(f\bepsilon) = (\partial_{t}f) \bepsilon + f D_{t}(\bepsilon),
\end{equation*}
for any numeric function $f:t \mapsto f(t)$. Of course, without further assumption, this definition is tricky. However, in this paper, and more generally when dealing with Riemannian geometry on infinite dimensional manifolds, one usually restricts to \emph{local operators}. This means that the considered definition involves only the pointwise value of the derivatives of the involved fields~\cite{Bru2018}, up to a given order. In other words it is assumed that these operators depend only on a finite number of \emph{jets} or \emph{gradients} (as it is common to call them in mechanics) of these fields.

In particular, $\partial_{t}$ defines a covariant derivative on the vector space $\Gamma(S^{2}T^{\star}\body)$ which, by restriction, induces on the open set $\Met(\body)$ a covariant derivative that can be considered as canonical. Any other covariant derivative on $T\Met(\body)$ can therefore be written as
\begin{equation*}
  D_{t}\bepsilon = \partial_{t}\bepsilon + \Gamma_{\bgamma}(\bgamma_{t},\bepsilon),
  \qquad
  \bgamma_{t}=\partial_{t}\bgamma,
\end{equation*}
where
\begin{equation*}
  \Gamma_{\bgamma}: T_{\bgamma}\Met(\body) \times T_{\bgamma}\Met(\body) \to T_{\bgamma}\Met(\body)
\end{equation*}
is a continuous bilinear operator called the \emph{Christoffel operator}. It is the analogous, in infinite dimension, of the Christoffel symbols $\Gamma_{ij}^{k}$ in finite dimension. This formula is true in finite dimension, but could be false in infinite dimension even if apparently no counter-example is known~\cite{Bru2018}. We will restrict the definition of covariant derivatives to those which can be written this way, where $\Gamma$ is a local operator.

The Riemannian covariant derivative associated with a metric $G$ on $\Met(\body)$ is characterized, on one hand, by being \emph{compatible with the metric $G$ }, i.e.
\begin{equation*}
  \dd{}{t}G_{\bgamma(t)}(\bepsilon^{1}(t), \bepsilon^{2}(t)) = G_{\bgamma(t)}(D_{t}\bepsilon^{1}(t), \bepsilon^{2}(t)) + G_{\bgamma(t)}(\bepsilon^{1}(t), D_{t}\bepsilon^{2}(t)),
\end{equation*}
for any one-parameter family $\bgamma(t)\in \Met(\body)$ and all vector fields $\bepsilon^{1}(t), \bepsilon^{2}(t)$ defined along $\bgamma(t)$ and, on the other hand, by the fact that it is \emph{symmetric}, i.e.
\begin{equation*}
  D_{s}\partial_{t}\bgamma(t,s) = D_{t}\partial_{s}\bgamma(t,s).
\end{equation*}
for any two-parameters family $\bgamma(t,s) \in \Met(\body)$, meaning here that $\Gamma_{\bgamma}(\bepsilon^{2},\bepsilon^{1}) = \Gamma_{\bgamma}(\bepsilon^{1},\bepsilon^{2})$. Note, however, that for a weak Riemannian metric, as it is the case with the Riemannian structure on $\Met(\body)$, only the uniqueness of a symmetric covariant derivative that preserves the metric (Levi-Civita connection) is ensured, but not its existence, \textit{a priori}.

\begin{thm}
  The Rougée metric $G^{\mu}$ defined by~\eqref{eq:Rougee-metric} over $\Met(\body)$ admits the following unique symmetric covariant derivative compatible with it
  \begin{equation}\label{eq:Rougee-covariant-derivative}
    D_{t}\bepsilon := \partial_{t} \bepsilon - \frac{1}{2} \left(\bgamma_{t}\bgamma^{-1}\bepsilon + \bepsilon\bgamma^{-1}\bgamma_{t}\right).
  \end{equation}
  Its curvature is non-null and writes
  \begin{equation*}
    R(\partial_s, \partial_{t})\bepsilon = \frac{1}{4}\bgamma\left[[\bgamma^{-1}\bgamma_{t},\bgamma^{-1}\bgamma_s],\bgamma^{-1}\bepsilon\right],
  \end{equation*}
  where the notation $[\cdot, \cdot]$ means the commutator of two mixed tensors of order two.
\end{thm}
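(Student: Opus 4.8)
The plan is to establish the two assertions — the formula \eqref{eq:Rougee-covariant-derivative} for the Levi-Civita connection, and the curvature formula — in turn. For the first, since uniqueness of a symmetric metric-compatible covariant derivative on a (possibly weak) Riemannian manifold follows from the usual Koszul argument (which only needs the metric to be injective on tangent vectors, not surjective), it suffices to exhibit a candidate Christoffel operator and verify it works. So first I would guess, from the structure of $G^\mu$, that $\Gamma_{\bgamma}(\bepsilon^1,\bepsilon^2) = -\tfrac12(\bepsilon^1\bgamma^{-1}\bepsilon^2 + \bepsilon^2\bgamma^{-1}\bepsilon^1)$; this is manifestly symmetric and bilinear, which handles the symmetry requirement (torsion-free) immediately and yields \eqref{eq:Rougee-covariant-derivative} upon setting $\bepsilon^1=\bgamma_t$, $\bepsilon^2=\bepsilon$. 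Then I would verify metric compatibility by a direct computation: differentiate $G^\mu_{\bgamma(t)}(\bepsilon^1(t),\bepsilon^2(t)) = \int_\body \tr(\bgamma^{-1}\bepsilon^1\bgamma^{-1}\bepsilon^2)\,\mu$ in $t$, using $\partial_t(\bgamma^{-1}) = -\bgamma^{-1}\bgamma_t\bgamma^{-1}$ and the cyclicity of the trace, and check that the terms involving $\bgamma_t$ reorganize exactly into $-\tfrac12$ times the trace expressions coming from $\Gamma_{\bgamma}(\bgamma_t,\bepsilon^1)$ and $\Gamma_{\bgamma}(\bgamma_t,\bepsilon^2)$ inserted into $G^\mu$. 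The volume form $\mu$ is inert under $\partial_t$ (unlike $\vol_\bgamma$ for the Ebin metric), so no extra terms arise; this is the reason the Rougée connection is slightly simpler than its Ebin analogue.

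For the curvature, I would compute $R(\partial_s,\partial_t)\bepsilon = D_s D_t\bepsilon - D_t D_s\bepsilon$ directly from the explicit formula \eqref{eq:Rougee-covariant-derivative}, applied to a two-parameter family $\bgamma(s,t)$ with a vector field $\bepsilon(s,t)$ along it. Expanding $D_t\bepsilon = \partial_t\bepsilon + \Gamma_\bgamma(\bgamma_t,\bepsilon)$ and then applying $D_s$, one gets terms of three types: those with two $\partial$-derivatives on $\bepsilon$ (these cancel by equality of mixed partials), those with one derivative of $\bgamma$ appearing as $\bgamma_{st} = \bgamma_{ts}$ (these cancel by symmetry of the second derivative), and those quadratic in the first derivatives $\bgamma_s,\bgamma_t$ acting on $\bepsilon$. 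The surviving quadratic terms, after using $\partial_t(\bgamma^{-1})=-\bgamma^{-1}\bgamma_t\bgamma^{-1}$ wherever a $\bgamma^{-1}$ sits between two tensors, should collapse — upon antisymmetrization in $s \leftrightarrow t$ and careful bookkeeping of which factors are to the left or right of the various $\bgamma^{-1}$'s — into the nested commutator $\tfrac14\,\bgamma\big[[\bgamma^{-1}\bgamma_t,\bgamma^{-1}\bgamma_s],\bgamma^{-1}\bepsilon\big]$. It is useful here to work with the mixed (one-up-one-down) tensors $\bgamma^{-1}\bgamma_t$, $\bgamma^{-1}\bgamma_s$, $\bgamma^{-1}\bepsilon$, in which matrix multiplication is associative and commutators make sense, and to conjugate back by $\bgamma$ at the end to land in $T_\bgamma\Met(\body)$.

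The main obstacle I expect is purely combinatorial rather than conceptual: keeping track, in the curvature computation, of the precise ordering of the (non-commuting) factors $\bepsilon$, $\bgamma_s$, $\bgamma_t$ around the three occurrences of $\bgamma^{-1}$, since the Christoffel operator is symmetric in its two slots but the products $\bepsilon^1\bgamma^{-1}\bepsilon^2$ are not symmetric under transposition. A clean way to control this is to pass systematically to mixed tensors $\bA := \bgamma^{-1}\bepsilon$, $\bU := \bgamma^{-1}\bgamma_t$, $\bV := \bgamma^{-1}\bgamma_s$, rewrite \eqref{eq:Rougee-covariant-derivative} as $\bgamma^{-1}D_t\bepsilon = \partial_t\bA + \tfrac12[\bA,\bU]$ after simplification (noting $\partial_t\bA = \bgamma^{-1}\partial_t\bepsilon - \bU\bA$), and then the second covariant derivative becomes a bracket computation in the associative algebra of mixed tensors, at which point the Jacobi identity delivers the nested commutator almost automatically. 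Verifying that this rewriting of \eqref{eq:Rougee-covariant-derivative} in terms of brackets is correct — i.e. that the symmetric Christoffel term really does produce $+\tfrac12[\bA,\bU]$ once the $-\bU\bA$ from $\partial_t\bA$ is absorbed — is the one slightly delicate algebraic check, and I would do it carefully before proceeding to the double-derivative expansion.
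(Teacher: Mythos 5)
The paper itself gives no proof of this theorem (it is stated as known, in parallel with the Ebin-metric computation of Gil-Medrano--Michor), so there is nothing to compare against; your plan is the standard and correct one, and it works: uniqueness from the Koszul identity (which only needs weak nondegeneracy), then direct verification of symmetry and metric compatibility for the candidate Christoffel operator $\Gamma_{\bgamma}(\bepsilon^{1},\bepsilon^{2})=-\tfrac12(\bepsilon^{1}\bgamma^{-1}\bepsilon^{2}+\bepsilon^{2}\bgamma^{-1}\bepsilon^{1})$ (the cyclic-trace cancellation you describe does go through, and $\mu$ being $\bgamma$-independent is indeed why no extra trace terms appear, unlike for the Ebin metric), then the curvature via mixed tensors.

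One concrete correction in the step you yourself flag as delicate: with $\bA=\bgamma^{-1}\bepsilon$ and $\bU=\bgamma^{-1}\bgamma_{t}$, one has $\partial_{t}\bA=\bgamma^{-1}\partial_{t}\bepsilon-\bU\bA$, hence
\begin{equation*}
  \bgamma^{-1}D_{t}\bepsilon=\partial_{t}\bA+\bU\bA-\tfrac12(\bU\bA+\bA\bU)=\partial_{t}\bA+\tfrac12[\bU,\bA],
\end{equation*}
i.e.\ the bracket is $[\bU,\bA]$, not $[\bA,\bU]$ as written in your sketch. This is not a harmless sign: carrying $+\tfrac12[\bA,\bU]$ through the double-derivative expansion yields $\tfrac34[\bA,[\bU,\bV]]$ rather than the stated result. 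With the correct sign, using $\partial_{s}\bU-\partial_{t}\bV=[\bU,\bV]$ (the $\bgamma^{-1}\bgamma_{st}$ terms cancel) and the Jacobi identity, the antisymmetrized second derivative collapses to $\tfrac12[[\bU,\bV],\bA]-\tfrac14[[\bU,\bV],\bA]=\tfrac14[[\bU,\bV],\bA]$, which after conjugation by $\bgamma$ is exactly $\tfrac14\bgamma\left[[\bgamma^{-1}\bgamma_{t},\bgamma^{-1}\bgamma_{s}],\bgamma^{-1}\bepsilon\right]$. So the plan succeeds once that check is done.
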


The following notable fact can be observed: although the Rougée metric explicitly depends on the volume form $\mu$, the associated covariant derivative does not depend on it. This covariant derivative is moreover invariant by the action of the whole diffeomorphism group of $\Met(\body)$ (whereas the metric is invariant by the diffeomorphisms which preserve the volume form $\mu$). In other words, given a diffeomorphism of the body $\psi \in \Diff(\body)$, a curve $\widetilde{\bgamma}: t \mapsto \bgamma(t)$ on $\Met(\body)$ and a curve $\bepsilon(t)$ on $T\Met(\body)$ defined along $\widetilde{\bgamma}$, and using the notation
$\dcov{\widetilde{\bgamma}}{}$ rather than $D_{t}$, to insist on the dependence on the path $\widetilde{\bgamma}$, this means that
\begin{equation*}
  \dcov{\psi^{*}\widetilde{\bgamma}}{(\psi^{*}\bepsilon)} = \psi^{*}\left(\dcov{\widetilde{\bgamma}}{\bepsilon}\right)
\end{equation*}
and thus
\begin{equation*}
  \Gamma_{\psi^{*}\bgamma}(\psi^{*}\bgamma_{t},\psi^{*}\bepsilon) = \psi^{*}\left(\Gamma_{\bgamma}(\bgamma_{t},\bepsilon)\right).
\end{equation*}

\begin{rem}
  The covariant derivative associated with the Ebin metric~\eqref{eq:Ebin-metric} on $\Met(\body)$ has been calculated in~\cite{GM1991}. It is slightly more complicated and writes
  \begin{equation}\label{eq:Ebin-covariant-derivative}
    D_{t}\bepsilon := \partial_{t} \bepsilon - \frac{1}{2} \left( \bgamma_{t}\bgamma^{-1}\bepsilon + \bepsilon\bgamma^{-1}\bgamma_{t} + \frac{1}{2} \tr(\bgamma^{-1}\bgamma_{t}\bgamma^{-1}\bepsilon)\bgamma - \frac{1}{2}\tr(\bgamma^{-1}\bgamma_{t})\bepsilon - \frac{1}{2}\tr(\bgamma^{-1}\bepsilon)\bgamma_{t} \right).
  \end{equation}
  It is also invariant by $\Diff(\body)$.
\end{rem}

\subsection{Geodesics and the Riemannian exponential mapping}
\label{subsec:geodesics}

In Riemannian geometry, geodesics are defined as the extremals of the \emph{energy functional}
\begin{equation*}
  E(\bgamma) := \frac{1}{2} \int_{0}^{1} \langle \bgamma_{t}, \bgamma_{t} \rangle \, dt.
\end{equation*}
For the Rougée metric, they are therefore solutions of the associated Euler-Lagrange equation
\begin{equation*}
  D_{t}\bgamma_{t} = \bgamma_{tt} - \bgamma_{t}\bgamma^{-1}\bgamma_{t} = 0,
\end{equation*}
where $\bgamma_{tt}=\partial_{t}^{2}\bgamma$. The crucial observation is that the resolution of this equation is pointwise in nature, partial derivatives (relative to $t$) are to be understood as derivatives in the finite dimensional vector space $S^{2}T_{\XX}^{\star}\body$, where $\XX$ is fixed. This second-order differential equation recast as
\begin{equation*}
  \partial_{t}(\bgamma^{-1}\bgamma_{t}) = 0.
\end{equation*}
Given initial values $(\bgamma_{0},\bepsilon_{0}) \in \Met(\body) \times \Gamma(S^{2}T^{\star}\body)$, and introducing the mixed tensor $\widehat{\bepsilon}_{0} := \bgamma_{0}^{-1}\bepsilon_{0}$, we, obtain thus
\begin{equation*}
  \bgamma(t) = \bgamma_{0}\exp\left(t\widehat{\bepsilon}_{0}\right),
\end{equation*}
where here, $\exp(t\widehat{\bepsilon}_{0})(\XX)$ means the exponential of the linear endomorphism $t\widehat{\bepsilon}_{0}(\XX)$ of the finite dimensional vector space $T_{\XX}\body$.

We deduce then the expression of the \emph{Riemannian exponential mapping} associated with the Rougée metric $G^{\mu}$. It is defined as the time $1$ of the geodesic flow $\Phi(t,\bgamma_{0},\bepsilon)$, where $(\bgamma_{0},\bepsilon)$ are the initial data (position--velocity) at time $t=0$ of the geodesic $\bgamma(t) = \Phi(t,\bgamma_{0},\bepsilon)$. So we have
\begin{equation*}
  \Exp_{\bgamma_{0}}(\bepsilon) = \bgamma_{0}\exp\left(\bgamma_{0}^{-1}\bepsilon\right).
\end{equation*}

This exponential mapping is a global diffeomorphism from $T_{\bgamma_{0}}\Met(\body)$ onto $\Met(\body)$ for any $\bgamma_{0}\in \Met(\body)$ and provides a \emph{global chart} for $\Met(\body)$ by second-order symmetric covariant tensor fields. The proof is similar to that of~\cite[Proposition 2]{Cla2010} (see also~\cite{FG1989,GM1991}). It results essentially from the fact that, given a finite dimensional Euclidean space $(V,\bq)$, the exponential mapping (understood as the exponential of an endomorphism of a finite dimensional vector space)
\begin{equation*}
  \exp : S^{2}V \to S_{+}^{2}V
\end{equation*}
is a global diffeomorphism~\cite{Sou1997}.

\begin{rem}
  The logarithm of the metric $\bC=\varphi^{*}\bq$ on a reference configuration $\Omega_{0}$ (right Cauchy--Green tensor) has been initially introduced by Becker~\cite{Bec1893} and Hencky~\cite{Hen1928} (see also~\cite{MMEN2018}) to define the \emph{true strain tensor},
  \begin{equation*}
    \bE = \frac{1}{2} \bq \ln (\bq^{-1} \bC),
  \end{equation*}
  on the reference configuration $\Omega_{0}$. It was later successfully used to formulate finite strain elasto-plasticity of metallic materials in~\cite{MAL2002}, where the authors have extended to finite strains the additive decomposition of the total strain $\bE$ into an elastic strain $\bE^{e}$ and a traceless plastic strain $\bE^{p}$. This extended decomposition recasts, in this more general geometric framework\footnote{This formulation does not requires the introduction of the decomposition $RU$.} on $\Met(\body)$ introduced in~\cite{Rou1991a}, thanks to the Riemannian logarithm $\Log_{\bgamma_{0}}$ (the inverse of the Riemannian exponential mapping
  $\Exp_{\bgamma_{0}}$), as
  \begin{equation*}
    \pmb{\mathfrak E}=\pp_{0}^{\, *} \bE=\frac{1}{2} \Log_{\bgamma_{0}} \bgamma,
  \end{equation*}
  and thus
  \begin{equation*}
    \pmb{\mathfrak E}= \pmb{\mathfrak E}^{e}+ \pmb{\mathfrak E}^{p},
    \qquad
    \begin{cases}
      \pmb{\mathfrak E}^{e}=\pp_{0}^{\, *} \bE^{e} ,
      \\
      \pmb{\mathfrak E}^{p}=\pp_{0}^{\, *} \bE^{p},
    \end{cases}
  \end{equation*}
  where $\tr(\bgamma_{0}^{-1} \pmb{\mathfrak E}^{p})= \pp_{0}^{\, *} \tr(\bq^{-1} \bE^{p})=0$.
\end{rem}

\subsection{Covariant derivatives on $T^{\star}\Met(\body)$}
\label{subsec:covariant-derivative-on-TstarMet}

Recall that the cotangent vector space $T_{\bgamma}^{\star}\Met(\body)$ is a space of \emph{tensor-distributions}, \emph{i.e.} continuous linear functionals over the space of the virtual deformation fields $T_{\bgamma}\Met(\body)$. Any covariant derivative on $T\Met(\body)$ formally induces a covariant derivative on the cotangent bundle $T^{\star}\Met(\body)$, thanks to the \emph{Leibniz rule}, defined by
\begin{equation}\label{eq:nabla-P}
  (D_{t}\mathcal{P})(\bepsilon) = \partial_{t}(\mathcal{P}(\bepsilon)) - \mathcal{P} \left(D_{t}\bepsilon\right),
\end{equation}
for any \emph{covector field} $\mathcal{P}$ and any vector field $\bepsilon$ defined along a curve $\bgamma(t) \in \Met(\body)$. When $\mathcal{P}$ is a distribution with density, that is when
\begin{equation*}
  \mathcal{P}(\bepsilon) = \int_{\body} (\btheta:\bepsilon) \mu,
\end{equation*}
where $\btheta$ is a symmetric second-order contravariant tensor field on $\body$, we get
\begin{equation*}
  (D_{t}\mathcal{P})(\bepsilon) = \int_{\body} \left( \partial_{t}\btheta : \bepsilon - \btheta : \Gamma_{\bgamma}(\bgamma_{t},\bepsilon) \right) \mu.
\end{equation*}
Note, however, that $D_{t}\mathcal{P}$ may not be a distribution with density, because it is not at all obvious that the expression
\begin{equation*}
  \partial_{t}\btheta : \bepsilon - \btheta : \Gamma_{\bgamma}(\bgamma_{t},\bepsilon)
\end{equation*}
can be recast as the contraction of some contravariant tensor field with $\bepsilon$. If this is the case (for all $\bgamma$, $\btheta$ and $\bepsilon$), we then denote this contravariant tensor field (density) by $D_{t}\btheta$, and write
\begin{equation*}
  D_{t}\mathcal{P}(\bepsilon) = \int_{\body} \left( D_{t}\btheta : \bepsilon \right) \mu .
\end{equation*}
In that case, we say that the covariant derivative $D$ \emph{preserves distributions with densities}, and we have the Leibniz rule
\begin{equation*}
  D_{t}\btheta : \bepsilon + \btheta : D_{t}\bepsilon=\partial_{t}\left(\btheta : \bepsilon\right).
\end{equation*}

\begin{rem}\label{rem:local-covariant-derivatives}
  The covariant derivative $D$ always preserves distributions with densities when $\Gamma_{\bgamma}(\bgamma_{t},\bepsilon)$ depends only on $\bepsilon$ through its $0$-jets, which means that $\Gamma_{\bgamma}(\bgamma_{t},\bepsilon)(\XX)$ depends only on $\bepsilon(\XX)$, and we will write then
  \begin{equation*}
    \Gamma_{\bgamma}(\bgamma_{t},\bepsilon)(\XX) = \Gamma_{\bgamma}(\bgamma_{t},\bepsilon(\XX)).
  \end{equation*}
  Indeed, there is a natural local duality pairing between tensors $\btheta(\XX)\in S^{2}T_{\XX}\body$ and $\bepsilon(\XX)\in S^{2}T^{\star}_{\XX}\body$, which writes
  \begin{equation*}
    \btheta(\XX) : \bepsilon(\XX).
  \end{equation*}
  Using this duality, the adjoint of the linear operator
  \begin{equation*}
    S^{2}T^{\star}_{\XX}\body \to S^{2}T^{\star}_{\XX}\body, \qquad \bepsilon(\XX) \mapsto \Gamma_{\bgamma}(\bgamma_{t},\bepsilon)(\XX) = \Gamma_{\bgamma}(\bgamma_{t},\bepsilon(\XX)),
  \end{equation*}
  denoted by $\Gamma^{\star}_{\bgamma}(\bgamma_{t},\btheta)(\XX)$, is defined implicitly by
  \begin{equation}\label{eq:GammaStar}
    \btheta(\XX) : \Gamma_{\bgamma}(\bgamma_{t},\bepsilon(\XX)) = \Gamma_{\bgamma}^{\star}(\bgamma_{t},\btheta(\XX)): \bepsilon(\XX).
  \end{equation}
  In that case, we get immediately
  \begin{equation*}
    D_{t}\btheta = \partial_{t}\btheta - \Gamma_{\bgamma}^{\star}(\bgamma_{t},\btheta).
  \end{equation*}
\end{rem}

\section{General covariance -- Material frame indifference -- Objectivity}
\label{sec:objectivity}

In Classical Mechanics, the ``physical space'' is described by a \emph{three-dimensional oriented Euclidean affine space}, whereas ``time'' is assumed to be ``absolute'' and described by a \emph{one-dimensional oriented Euclidean affine space}. These assumptions seem to be confirmed by experience in a good approximation, at least on a daily scale. Paraphrasing Jean-Marie Souriau~\cite{Sou1997}, our clocks are theoretically \emph{synchronous} and we expect that, whatever the fate of each of us, they will indicate the same time at each of our meetings and, by extension, that time will be the same everywhere. Likewise, spatial length seems to make absolute sense. The choice of a time's unit and of an orthonormal space frame thus makes it possible to locate any \emph{event} of the universe by a quadruplet of real numbers $(t,x,y,z)$ which are the coordinates of this event in this \emph{frame} and which will be written in a more condensed way in the form $(t, \xx)$. Therefore, it is traditionally assumed that a change of observer leads to a transformation
\begin{equation*}
  (\bar{t},\bar{\xx}) = (t + t_{0},g(t)\xx),
\end{equation*}
where
\begin{equation*}
  g(t)\xx = Q(t)\xx + \bc(t)
\end{equation*}
is a path of Euclidean isometries of space $\espace$, with $Q(t)\in \SO(3)$ and $\bc(t) \in \RR^{3}$.

The notion of \emph{objectivity} or \emph{material indifference} in modern language, although often confused and controversial in the mechanical literature, seems to go back to the work of Oldroyd~\cite{Old1950} and the famous treatise of Truesdell and Noll~\cite{TN1965}, which sought to formulate principles of covariance that had to be respected by constitutive laws. We will not enter into the debate on the merits of these hypotheses here~\cite{YMO2006,YM2012}, but we will seek to clarify the mathematical definition of the concept of objectivity. To do so, we will introduce the following notation. Given a path of embeddings $\widetilde{\pp} := (\pp(t))$ and a path of space's diffeomorphisms $\widetilde{\varphi} := (\varphi(t))$, we set
\begin{equation*}
  (\widetilde{\varphi} \star \widetilde{\pp})(t) := \varphi(t)\circ\pp(t),
\end{equation*}
where $\varphi(t) \in \Diff(\espace)$ and $\pp(t) \in \Emb(\espace)$. In order to define rigorously objectivity, we are lead to formulate the following definition.

\begin{defn}
  A \emph{material tensor field} is a mapping $\mathcal{F} : \widetilde{\pp} \mapsto \bt_{\widetilde{\pp}}$ which, to any path of embeddings $\widetilde{\pp} := (\pp(t))$, associates a tensor field $\bt_{\widetilde{\pp}} = (\bt_{\widetilde{\pp}}(t))$, depending on time $t$ and defined, at each time $t$, on $\Omega_{p(t)} := \pp(t)(\body)$.
\end{defn}

\begin{rem}\label{rem:embeddings-bundle}
  In more rigorous mathematical language, the mapping $\mathcal{F}$ corresponds to a smooth section along a path $\widetilde{\pp}$ (see~\autoref{sec:covariant-derivatives}) of the vector bundle
  \begin{equation*}
    \EE = \bigsqcup_{\pp \in \Emb(\body,\espace)} \EE_{\pp},
  \end{equation*}
  where
  \begin{equation*}
    \EE_{\pp} := \Cinf(\Omega_{\pp},\TT)
  \end{equation*}
  is the vector space of tensor fields of a given type $\mathbb{T}$ (a vector space of tensors on $\RR^{3}$) on $\espace$ but defined \textit{a priori} only on $\Omega_{\pp}$. This (infinite dimensional) vector bundle has two natural trivializations. The first one writes
  \begin{equation}\label{eq:E-first-trivialization}
    \Psi_{1} : \EE \to \Emb(\body,\espace) \times \Cinf(\body,\TT), \qquad \bt_{\pp} \mapsto (\pp, \bt_{\pp}\circ \pp)
  \end{equation}
  and the second one is given by
  \begin{equation}\label{eq:E-second-trivialization}
    \Psi_{2} : \EE \to \Emb(\body,\espace) \times \Gamma(\TT(\body)), \qquad \bt_{\pp} \mapsto (\pp, \pp^{*}\bt_{\pp}),
  \end{equation}
  where $\TT(\body)$ is the (finite dimensional) vector bundle of tensors of type $\TT$ over $\body$.
\end{rem}

\begin{defn}\label{def:F-is-obj-cov}
  Let $\mathcal{F} : \widetilde{\pp} \mapsto \bt_{\widetilde{\pp}}$ be a material tensor field. Then, we say that
  \begin{enumerate}
    \item $\mathcal{F}$ is \emph{objective} if
          \begin{equation*}
            \bt_{\widetilde{g} \star \widetilde{\pp}}(t) = g(t)_{*}\,\bt_{\widetilde{\pp}}(t),
          \end{equation*}
          for any path of embeddings $\widetilde{\pp}$ and any path of \emph{Euclidean isometries} $\widetilde{g} := (g(t))$ of $\espace$;
    \item $\mathcal{F}$ is \emph{general covariant} if
          \begin{equation*}
            \bt_{\widetilde{\varphi} \star \widetilde{\pp}}(t) = \varphi(t)_{*}\,\bt_{\widetilde{\pp}}(t),
          \end{equation*}
          for any path of embeddings $\widetilde{\pp}$ and any path of \emph{diffeomorphisms} $\widetilde{\varphi} := (\varphi(t))$ of $\espace$.
  \end{enumerate}
  Here, $g(t)_{*}$ and $\varphi(t)_{*}$ mean the \emph{push-forward} by $g(t)$ or $\varphi(t)$ (see~\autoref{sec:covariant-derivatives}).
\end{defn}

It is obvious that any general covariant mapping $\mathcal{F}$ is also objective, but the converse is not true.

\begin{rem}
  Objectivity (or general covariance) translates into equivariant properties of sections along a path of the vector bundle $\EE$ (see~\autoref{sec:pullback-pushforward}).
\end{rem}

Let us illustrate this concept with two classical examples~\cite{MH1994,Hau2002,Ber2012}. Let $\mathcal{F} : \widetilde{\pp} \mapsto \uu_{\widetilde{\pp}}$ be the mapping which, to any path of embeddings $\widetilde{\pp}$, associates its (right) Eulerian velocity
\begin{equation*}
  \uu_{\widetilde{\pp}}(t) := (\partial_{t}\pp)\circ \pp(t)^{-1},
\end{equation*}
which is a vector field on $\Omega_{p(t)}$. We have then
\begin{equation*}
  \uu_{\widetilde{g} \star \widetilde{\pp}}(t) = g(t)_{*}\uu_{\widetilde{\pp}}(t) + \ww(t),
\end{equation*}
where $\ww(t) := (\partial_{t}g)\circ g(t)^{-1}$ is the \emph{drive velocity} and $g(t)_{*}\uu_{\widetilde{\pp}}(t)$ is the push-forward of the Eulerian velocity by $g(t)$. Thus, the Eulerian velocity is not objective.

Similarly, the Eulerian velocity gradient transforms as
\begin{equation*}
  \nabla \uu_{\widetilde{g} \star \widetilde{\pp}}(t) = Q(t) (\nabla \uu_{\widetilde{\pp}}(t)) Q(t)^{t} + \bOmega(t),
\end{equation*}
where $g(t)\xx = Q(t)\xx + \bc(t)$ and $\bOmega(t) := Q_{t}(t)Q(t)^{-1}$ is skew symmetric. The mapping $\widetilde{\pp} \mapsto \nabla \uu_{\widetilde{\pp}}$
is therefore not objective, but the rate of deformation
\begin{equation*}
  \widehat{\bd}_{\widetilde{\pp}} := \frac{1}{2}\left( \nabla \uu_{\widetilde{\pp}} + (\nabla \uu_{\widetilde{\pp}})^{t}\right),
\end{equation*}
is objective, because
\begin{equation*}
  \widehat{\bd}_{\widetilde{g} \star \widetilde{\pp}}(t) = \frac{1}{2} \left(\nabla \uu_{\widetilde{g} \star \widetilde{\pp}}(t) + (\nabla \uu_{\widetilde{g} \star \widetilde{\pp}}(t))^{t}\right) =  \frac{1}{2} Q(t)\left(\nabla \uu_{\widetilde{\pp}}(t) + (\nabla \uu_{\widetilde{\pp}}(t))^{t}\right)Q(t)^{t} = g(t)_{*}\widehat{\bd}_{\widetilde{\pp}}(t),
\end{equation*}
since $\bOmega(t)^{t} = - \bOmega(t)$. It is nevertheless not general covariant.

\begin{exam}
  Let $\bt$ be a tensor field defined on $\espace$ and $\mathcal{F}(\widetilde{\pp}) = \bt_{\widetilde{\pp}}$, be the restriction of $\bt$ to the deformed configuration $\Omega_{p(t)} = \pp(t)(\body)$ at time $t$. Then, $\mathcal{F}$ is objective, if and only if, $g_{*}\bt = \bt$ for each isometry $g$. When $\bt$ is a scalar function, this implies that it is constant. If $\bt$ is a vector field, then $\bt =0$. If $\bt$ is a field of covariant symmetric second-order tensors, then $\bt = \lambda \bq$, where $\lambda$ is a constant and $\bq$ is the Euclidean metric. If $\bt$ is a field of alternate covariant tensors of order $3$, then $\bt = \lambda \vol_{\bq}$ is proportional to the canonical Euclidean volume form by a constant.
\end{exam}

\begin{exam}
  Let $\bT$ be a tensor field defined on $\body$ and $\mathcal{F}(\widetilde{\pp}) = (\pp(t)_{*}\bT)$. Then $\mathcal{F}$ is general covariant and therefore objective. This is the case, for instance, of the push-forward of the mass measure $\mu$ on space. We deduce from this fact, and from the objectivity of $\vol_{\bq}$, the objectivity of the mass density $\rho$. The mass density is however not general covariant but more than just objective. It is covariant under paths of volume-preserving diffeomorphisms of $\espace$, that is diffeomorphisms such that $\varphi_{*}\vol_{\bq}=\vol_{\bq}$.
\end{exam}

The notion of objectivity (and of general covariance) extends, without difficulty, from tensor fields to \emph{tensor-distributions}, \emph{i.e.} to continuous linear functionals on tensor fields. More precisely, let $\mathcal{P}_{\widetilde{\pp}}$ be a path of tensor-distributions over the space of symmetric second-order covariant tensor fields and $\widetilde{g}$ be a path of Euclidean isometries. We will say that $\mathcal{P}_{\widetilde{\pp}}$ is objective if
\begin{equation*}
  \mathcal{P}_{\widetilde{g} \star \widetilde{\pp}}(t) = g(t)_{*} \mathcal{P}_{\widetilde{\pp}}(t),
\end{equation*}
where
\begin{equation*}
  (g(t)_{*} \mathcal{P}_{\widetilde{\pp}}(t))(\bk) := \mathcal{P}_{\widetilde{\pp}}(t)(g(t)^{*}\bk),
\end{equation*}
for any symmetric second-order covariant tensor fields $\bk$ defined on $\Omega_{\pp(t)}$. This extended definition allows us to reformulate the following result known as \emph{Noll's theorem}~\cite{Nol1974,TN1965}.

\begin{thm}
  The tensor-distribution with density
  \begin{equation*}
    \mathcal{P}_{\widetilde{\pp}}(\bk) := \int_{\Omega_{\pp}} \bsigma_{\widetilde{\pp}} : \bk \, \vol_{\bq}
  \end{equation*}
  is objective if and only if the tensor field $\bsigma_{\widetilde{\pp}}$ is.
\end{thm}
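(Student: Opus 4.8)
The plan is to unwind both notions of objectivity and to reduce the equivalence to the non-degeneracy of the $L^{2}$-pairing between symmetric second-order contravariant tensor fields and symmetric second-order covariant test fields. Fix a time $t$ and, to lighten the notation, write $\pp = \pp(t)$, $g = g(t)$, $\bsigma = \bsigma_{\widetilde{\pp}}(t)$ (a field on $\Omega_{\pp}$) and $\bar{\bsigma} = \bsigma_{\widetilde{g}\star\widetilde{\pp}}(t)$ (a field on $\Omega_{g\circ\pp} = g(\Omega_{\pp})$). Unwinding the definition of objectivity for the tensor-distribution $\mathcal{P}_{\widetilde{\pp}}$, it is objective if and only if, for every symmetric covariant field $\bk$ on $\Omega_{g\circ\pp}$,
\[
  \int_{\Omega_{g\circ\pp}} \bar{\bsigma} : \bk \, \vol_{\bq} \;=\; \big( g_{*} \mathcal{P}_{\widetilde{\pp}} \big)(\bk) \;=\; \mathcal{P}_{\widetilde{\pp}}(g^{*}\bk) \;=\; \int_{\Omega_{\pp}} \bsigma : (g^{*}\bk)\, \vol_{\bq},
\]
and this for every path of embeddings $\widetilde{\pp}$ and every path of Euclidean isometries $\widetilde{g}$.

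The core step is to rewrite the right-hand side as an integral over $\Omega_{g\circ\pp}$. I would use the change of variables $y = g(x)$ together with two elementary facts: first, $g$ being a Euclidean isometry, it preserves the canonical Riemannian volume form, $g_{*}\vol_{\bq} = \vol_{\bq}$ on the relevant domains; second, the double contraction is a natural operation, so that $g_{*}\big(\bsigma : g^{*}\bk\big) = (g_{*}\bsigma) : (g_{*}g^{*}\bk) = (g_{*}\bsigma) : \bk$ as scalar functions on $\Omega_{g\circ\pp}$. Combining both, $\int_{\Omega_{\pp}} \bsigma : (g^{*}\bk)\,\vol_{\bq} = \int_{\Omega_{g\circ\pp}} (g_{*}\bsigma) : \bk \, \vol_{\bq}$. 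Hence $\mathcal{P}_{\widetilde{\pp}}$ is objective if and only if
\[
  \int_{\Omega_{g\circ\pp}} \big( \bar{\bsigma} - g_{*}\bsigma \big) : \bk \, \vol_{\bq} = 0
\]
for all covariant test fields $\bk$, for every $\widetilde{\pp}$ and every $\widetilde{g}$.

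From here the two implications split. If $\bsigma_{\widetilde{\pp}}$ is objective, then $\bar{\bsigma} = g_{*}\bsigma$ by definition and the displayed identity holds trivially, so $\mathcal{P}_{\widetilde{\pp}}$ is objective. Conversely, if $\mathcal{P}_{\widetilde{\pp}}$ is objective, we apply the vanishing identity to covariant fields $\bk$ supported in the open interior of $\Omega_{g\circ\pp}$ and invoke the fundamental lemma of the calculus of variations — the pointwise pairing $(\bar{\bsigma} - g_{*}\bsigma)(x) : \bk(x)$ on symmetric matrices being non-degenerate — to conclude $\bar{\bsigma} = g_{*}\bsigma$ on the interior, hence everywhere on $\Omega_{g\circ\pp}$ by continuity of $\bsigma$. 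That is, $\bsigma_{\widetilde{\pp}}$ is objective.

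All the computations are elementary; the points that require care are purely bookkeeping — correctly tracking push-forwards against pull-backs in the change of variables, and the fact that $\Omega_{\pp}$ is a manifold \emph{with boundary}, so that the fundamental lemma must be applied with test fields supported away from $\partial\Omega_{\pp}$ and the conclusion then extended up to the boundary by continuity. I would regard this last density step as the only (minor) obstacle.
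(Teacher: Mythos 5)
Your proposal is correct and follows essentially the same route as the paper: unwind both definitions, use the fact that a Euclidean isometry preserves $\vol_{\bq}$ together with the naturality of the double contraction to perform the change of variables, and then conclude by the non-degeneracy of the $L^{2}$-pairing against arbitrary test fields $\bk$. The only difference is that you spell out the converse direction (fundamental lemma with test fields supported away from $\partial\Omega_{\pp}$, then extension by continuity) where the paper simply invokes that the identity holds for all $\bar{\bk}$; this is a welcome clarification rather than a deviation.
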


\begin{proof}
  The tensor-distribution $\mathcal{P}_{\widetilde{\pp}}$ is objective if and only if
  \begin{equation*}
    \mathcal{P}_{\widetilde{g} \star \widetilde{\pp}}(t) = g(t)_{*} \mathcal{P}_{\widetilde{\pp}}(t),
  \end{equation*}
  which writes
  \begin{equation*}
    \int_{\Omega_{\bar{\pp}(t)}} \bsigma_{\widetilde{g} \star\widetilde{\pp}}(t) : \bar{\bk} \, \vol_{\bq} = \int_{\Omega_{p(t)}} \bsigma_{\widetilde{\pp}}(t) : (g(t)^{*}\bar{\bk}) \, \vol_{\bq} ,
  \end{equation*}
  for any field $\bar{\bk}$ defined on $\Omega_{\bar{\pp}(t)}$, where $\bar{\pp}(t) = g(t) \circ \pp(t)$. But
  \begin{equation*}
    \bsigma_{\widetilde{\pp}}(t) : (g(t)^{*}\bar{\bk}) \, \vol_{\bq} = g(t)^{*}\left(g(t)_{*}\bsigma_{\widetilde{\pp}}(t) : \bar{\bk} \, \vol_{\bq}\right)
  \end{equation*}
  since $g(t)^{*}\vol_{\bq} = \vol_{\bq}$. The objectivity of $\mathcal{P}_{\widetilde{\pp}}$ therefore translates, after using the change of variables formula, into
  \begin{equation*}
    \int_{\Omega_{\bar{\pp}(t)}} \bsigma_{\widetilde{g} \star \widetilde{\pp}}(t) : \bar{\bk} \, \vol_{\bq} = \int_{\Omega_{\bar{\pp}(t)}} g(t)_{*}\bsigma_{\widetilde{\pp}}(t) : \bar{\bk} \, \vol_{\bq}.
  \end{equation*}
  This being true for any field $\bar{\bk}$ defined on $\Omega_{\bar{\pp}}$, we have the equivalence between
  \begin{equation*}
    \mathcal{P}_{\widetilde{g} \star \widetilde{\pp}}(t) = g(t)_{*} \mathcal{P}_{\widetilde{\pp}}(t)
  \end{equation*}
  and
  \begin{equation*}
    \bsigma_{\widetilde{g} \star \widetilde{\pp}}(t) = g(t)_{*} \bsigma_{\widetilde{\pp}}(t),
  \end{equation*}
  which completes the proof.
\end{proof}

The next result states that each Cauchy elastic constitutive law as defined by Rougée is necessarily \emph{objective}.

\begin{thm}\label{thm:obj-elas}
  Consider a vector field $F:\Met(\body)\to T\Met(\body)$ and the corresponding elastic constitutive law in the sense of Rougée: $\btheta = \bgamma^{-1}F(\bgamma)\bgamma^{-1}$. Then the resulting Cauchy elastic law $\bsigma = \bsigma_{\pp}$, $\rho=\rho_{\pp}=(\pp_*\mu)/ \vol_\bq$, where
  \begin{equation*}
    \bsigma_{\pp} = \rho_{\pp}\, \bq^{-1}\pp_{*}F(\pp^{*}\bq)\bq^{-1},
  \end{equation*}
  is objective.
\end{thm}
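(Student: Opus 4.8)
The plan is to follow each factor of the closed-form expression $\bsigma_{\pp} = \rho_{\pp}\, \bq^{-1}\pp_{*}F(\pp^{*}\bq)\bq^{-1}$ through the substitution of the path $\widetilde{\pp}$ by $\widetilde{g}\star\widetilde{\pp}$, that is $\pp(t)\mapsto g(t)\circ\pp(t)$ with $g(t)\in\Isom(\espace,\bq)$, and to check that the whole expression is then transported by the push-forward $g(t)_{*}$. The decisive input is the invariance already exploited in \autoref{sec:MMC}: since $g^{*}\bq=\bq$ for an isometry $g$, one has $(g\circ\pp)^{*}\bq = \pp^{*}(g^{*}\bq) = \pp^{*}\bq = \bgamma$. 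Hence $F(\bgamma)$ --- a tensor field living on $\body$, which does not depend on the embedding at all --- is left \emph{unchanged}; everything that moves is carried by the operations that transport tensors from $\body$ to the deformed configuration in $\espace$.

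First I would record how the mass density transforms. By definition $\pp_{*}\mu=\rho_{\pp}\,\vol_{\bq}$, and because an isometry preserves the Riemannian volume form, $g_{*}\vol_{\bq}=\vol_{\bq}$; therefore $(g\circ\pp)_{*}\mu = g_{*}(\pp_{*}\mu) = g_{*}\!\left(\rho_{\pp}\,\vol_{\bq}\right) = (g_{*}\rho_{\pp})\,\vol_{\bq}$, so that $\rho_{g\circ\pp}=g_{*}\rho_{\pp}$ (push-forward of a scalar, i.e. $\rho_{\pp}\circ g^{-1}$). Next, $g^{*}\bq=\bq$ gives at once $g_{*}\bq^{-1}=\bq^{-1}$, so the inverse metric is also transported by $g_{*}$ (to itself). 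Finally, functoriality of the push-forward under composition, $(g\circ\pp)_{*}=g_{*}\circ\pp_{*}$, gives $(g\circ\pp)_{*}F(\bgamma)=g_{*}\!\left(\pp_{*}F(\bgamma)\right)$.

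It then remains to assemble these facts, using that the push-forward by a diffeomorphism is an algebra morphism on tensor fields, hence commutes with the products and contractions appearing in the formula:
\begin{align*}
  \bsigma_{\widetilde{g}\star\widetilde{\pp}}
   &= \rho_{g\circ\pp}\,\bq^{-1}\,(g\circ\pp)_{*}F(\bgamma)\,\bq^{-1} \\
   &= (g_{*}\rho_{\pp})\,(g_{*}\bq^{-1})\,\bigl(g_{*}\pp_{*}F(\bgamma)\bigr)\,(g_{*}\bq^{-1})
    = g_{*}\!\left(\rho_{\pp}\,\bq^{-1}\,\pp_{*}F(\bgamma)\,\bq^{-1}\right)
    = g_{*}\,\bsigma_{\widetilde{\pp}}.
\end{align*}
Evaluating every term at the same instant $t$ along the paths is automatic, since no time derivative is involved, so this is precisely objectivity in the sense of \autoref{def:F-is-obj-cov}.

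I do not expect a genuine obstacle: once the invariance $(g\circ\pp)^{*}\bq=\bgamma$ is isolated, the argument is pure bookkeeping. The only point deserving care is to check how $\rho_{\pp}$ and $\bq^{-1}$ behave --- that each is transported by $g_{*}$ (with $\bq^{-1}$ in fact fixed by it) --- so that all four factors can be pulled out under a single $g_{*}$; this uses exactly that $g$ is an \emph{isometry} of $(\espace,\bq)$, and it is also the reason why such a constitutive law is in general \emph{not} general covariant while remaining objective.
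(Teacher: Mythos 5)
Your proposal is correct and follows essentially the same route as the paper's own proof: the invariance $(g\circ\pp)^{*}\bq=\pp^{*}\bq$ for isometries, the transformation $\rho_{g\circ\pp}=g_{*}\rho_{\pp}$ via $g_{*}\vol_{\bq}=\vol_{\bq}$, and the functoriality of push-forward to pull everything under a single $g_{*}$. Your explicit remark that $g_{*}\bq^{-1}=\bq^{-1}$ is a small clarification the paper leaves implicit, but the argument is the same.
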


\begin{proof}
  Given a path of embeddings $\widetilde{\pp} := (\pp(t))$, we have
  \begin{equation*}
    \bsigma_{\widetilde{\pp}} = \rho_{\widetilde{\pp}} \, \bq^{-1}\bepsilon_{\widetilde{\pp}}\bq^{-1},
  \end{equation*}
  where $\bepsilon_{\widetilde{\pp}}(t) = \pp(t)_{*}F(\pp(t)^{*}\bq)$. But
  \begin{equation*}
    (g(t) \circ \pp(t))^{*}\bq = \pp(t)^{*}(g(t)^{*}\bq) = \pp(t)^{*}\bq,
  \end{equation*}
  since $g(t)$ is an isometry of $\bq$, and hence $\bepsilon_{\widetilde{g}\star\widetilde{\pp}}(t) =  g(t)_{*}\bepsilon_{\widetilde{\pp}}(t)$. Moreover
  \begin{equation*}
    \rho_{\widetilde{g}\star\widetilde{\pp}}(t)\vol_{\bq} = (g(t) \circ \pp(t))_{*}\mu = g(t)_{*}(\pp(t)_{*}\mu) = (g(t)_{*}\rho_{\widetilde{\pp}}(t))\vol_{\bq},
  \end{equation*}
  since $g(t)_{*}\vol_{\bq} = \vol_{\bq}$, and thus $\rho_{\widetilde{g}\star\widetilde{\pp}}(t) = g(t)_{*}\rho_{\widetilde{\pp}}(t)$. We get finally
  \begin{equation*}
    \bsigma_{\widetilde{\pp}}(t) = (g(t)_{*}\rho_{\widetilde{\pp}}(t))\bq^{-1}(g(t)_{*}\bepsilon_{\widetilde{\pp}}(t))\bq^{-1} = g(t)_{*}\bsigma_{\widetilde{\pp}}(t).
  \end{equation*}
\end{proof}

\section{Material and objective derivatives}
\label{sec:material-derivatives}

Objectivity has been extended to time derivatives. These \emph{objective time derivatives}, noted $d_{\widetilde{\pp}}/dt$ here, are principally used to formulate hypo-elasticity laws on the deformed configuration~\cite{Jau1911,Tru1955}. For instance, in solid mechanics, one often writes
\begin{equation}\label{eq:hypo}
  \dmat{\widetilde{\pp}}{\btau} = \mathbf{H} : \bd^{e} = \mathbf{H} : ( \bd - \bd^{p}),
  \qquad
  \left(\dmat{\widetilde{\pp}}{\tau}^{ij} = H^{ijkl} d_{kl} = H^{ijkl} (d_{ij} - d_{ij}^{p}) \right),
\end{equation}
where $\btau$ is the Kirchhoff stress tensor, $\bd$ is the total strain rate, and where $\bd^{e} = \bd - \bd^{p}$ and $\bd^{p}$ are respectively the elastic and the plastic strain rates. The fourth-order tensor $\mathbf{H}$, called the \emph{hypo-elasticity tensor}, depends \emph{a priori} on the state of deformation~\cite{Ber1960,SP1984}. In fluid mechanics, a relative elasticity of the fluid medium is introduced by writing for instance~\cite{Old1950,Hau2002}:
\begin{equation}\label{eq:hypofluides}
  \bsigma + \lambda \dmat{\widetilde{\pp}}{\bsigma} = 2\eta\, \bq^{-1} \bd\,\bq^{-1},
\end{equation}
where $\eta$ is the dynamic viscosity, and $\lambda$ is the relaxation time.

Although the concept of \emph{objective time derivative} is the subject of an abundant literature in Continuum Mechanics, it rarely seems to be defined with enough mathematical rigour. In this section, we aim to fill this gap. First, we will formulate the concept of \emph{material time derivative} on material tensor fields (which has not to be confused with the \emph{particle derivative} as defined in~example~\ref{exam:particle-derivative}). In mathematical terms, a material time derivative is nothing else than a \emph{covariant derivative} (along a path) on the vector bundle (of infinite dimension) $\EE$ defined in remark~\ref{rem:embeddings-bundle}. This point of view will be detailed in \autoref{sec:converse-theorem}. However, in order to be more readable by the mechanical community we will introduce the following definition.

\begin{defn}
  A \emph{material derivative} is a \emph{linear operator} $d_{\widetilde{\pp}}/dt$ acting on the space of material tensor fields $\bt_{\widetilde{p}}$, defined along each path of embeddings $\widetilde{\pp}: t \mapsto \pp(t)$, and furthermore satisfying the Leibniz rule
  \begin{equation*}
    \dmat{\widetilde{\pp}}{}(f\bt_{\widetilde{p}}) = (\partial_{t}f)\bt_{\widetilde{p}} + f \dmat{\widetilde{\pp}}{}(\bt_{\widetilde{p}}),
  \end{equation*}
  for each function $f : t \mapsto f(t)$.
\end{defn}

\begin{exam}[Particle derivative]\label{exam:particle-derivative}
  Perhaps, the best known example of a material derivative is the \emph{particle derivative}, defined as follows. Let $\widetilde{\pp}: t \mapsto \pp(t)$ be a path of embeddings and $\bt_{\widetilde{\pp}}$ be a tensor field defined along $\widetilde{\pp}$. For each time $t$, $\bt_{\widetilde{\pp}}(t)$ is a tensor field on $\Omega_{p(t)}$, and we will write
  \begin{equation*}
    \bt(t,\xx) := \bt_{\widetilde{\pp}}(t)(\xx), \qquad \xx \in \Omega_{\pp(t)}.
  \end{equation*}
  Now, for each particle indexed by $\XX \in \body$, its ``history'' is described by the curve $\tilde{\xx}: t \mapsto \pp(t,\XX)$ on $\espace$ and $\dot{\tilde{\xx}}(t) = \uu(t,\tilde{\xx}(t))$, where $\uu$ is the Eulerian velocity. Then,
  \begin{equation*}
    \bt(t) := \bt(t,\tilde{\xx}(t))
  \end{equation*}
  is a tensor field in $\espace$, defined along the path $\tilde{\xx}$. The \emph{particle derivative} of $\bt$ is defined as the pointwise derivative
  \begin{equation}\label{eq:particular-derivative}
    \dot{\bt} := \partial_{t}(\bt \circ \pp)\circ \pp^{-1} = \partial_{t}\bt + \nabla_{\uu}\bt,
  \end{equation}
  where $\nabla$ is the canonical derivative on the Euclidean space $\espace$.
\end{exam}

\begin{rem}
  The particle derivative corresponds to the canonical covariant derivative (see remark~\ref{rem:canonical-covariant-derivative}) on the vector bundle $\EE$ associated with the first trivialization~\eqref{eq:E-first-trivialization} $\Emb(\body,\espace) \times \Cinf(\body,\TT)$ described in remark~\ref{rem:embeddings-bundle}.
\end{rem}

The extension of the concept of objectivity for material derivatives is naturally obtained by requiring that they transform objective quantities into objective quantities. This leads us to formulate the following definitions. We recall that $\widetilde{g} \star \widetilde{\pp}= (\varphi(t) \circ \pp(t))$.

\begin{defn}\label{def:mat-deriv-obj}
  (1) A \emph{material derivative} $d_{\widetilde{\pp}}/dt$ is \emph{objective} if
  \begin{equation}\label{eq:objectivity}
    \left(\dmat{\widetilde{g} \star \widetilde{\pp}}{}\bt_{\widetilde{g} \star \widetilde{\pp}}\right)(t) = g(t)_{*}\left(\dmat{\widetilde{\pp}}{}\bt_{\widetilde{\pp}}\right)(t),
  \end{equation}
  for each path of embeddings $\widetilde{\pp}=(\pp(t))$, each path of Euclidean isometries $\widetilde{g}=(g(t))$, and each material tensor field $\bt_{\widetilde{\pp}}$ defined along $\widetilde{\pp}$.

  (2) A \emph{material derivative} $d_{\widetilde{\pp}}/dt$ is \emph{general covariant} if
  \begin{equation}\label{eq:general-covariance}
    \left(\dmat{\widetilde{\varphi} \star \widetilde{\pp}}{}\bt_{\widetilde{\varphi} \star \widetilde{\pp}}\right)(t) = \varphi(t)_{*}\left(\dmat{\widetilde{\pp}}{}\bt_{\widetilde{\pp}}\right)(t),
  \end{equation}
  for each path of embeddings $\widetilde{\pp}=(\pp(t))$, each path of diffeomorphisms $\widetilde{\varphi} = (\varphi(t))$ of space, and each material tensor field $\bt_{\widetilde{\pp}}$ defined along $\widetilde{\pp}$.
\end{defn}

\begin{exam}[Non-objectivity of the particle derivative]
  The particle derivative defined by~\eqref{eq:particular-derivative} is not objective. Indeed, if $\widetilde{g} \star \widetilde{\pp} = (g(t) \circ \pp(t))$, its Eulerian velocity writes $\bar \uu= g_{*} \uu + \ww$, where $\uu$ is the Eulerian velocity of the path $\widetilde{\pp} = (\pp(t))$ and $\ww := \partial_{t}g \circ g^{-1}$ is the drive velocity. We get thus, thanks to remark~\ref{rem:Lie-path},
  \begin{equation*}
    \dmat{\widetilde{g} \star \widetilde{\pp}}{}(\widetilde{g}_{*}\bt) = \widetilde{g}_{*}\left( \partial_{t}\bt + \nabla_{\uu}\bt + \nabla_{\widetilde{g}^{*}\ww}\bt- \Lie_{\widetilde{g}^{*}\ww}\bt \right),
  \end{equation*}
  which is not equal to
  \begin{equation*}
    \widetilde{g}_{*}\left(\dmat{\widetilde{\pp}}{\bt}\right) = \widetilde{g}_{*}\left( \partial_{t}\bt + \nabla_{\uu}\bt\right).
  \end{equation*}
\end{exam}

\begin{exam}[General covariance of the Lie derivative]\label{exam:Lie-derivative-general-covariance}
  Another example of material time derivative is given by
  \begin{equation}\label{eq:Oldroyd-derivative}
    \dmat{\widetilde{\pp}}{\bt} = \overset{\triangledown}{\bt} := \partial_{t}\bt + \Lie_{\uu}\bt.
  \end{equation}
  This one is not only objective but also general covariant~\cite{MH1994}. Indeed
  \begin{equation*}
    \dmat{\widetilde{\varphi} \star \widetilde{\pp}}{}(\widetilde{\varphi}_{*}\bt) = \partial_{t}(\widetilde{\varphi}_{*}\bt) + \Lie_{\bar{\uu}}(\widetilde{\varphi}_{*}\bt).
  \end{equation*}
  where $\bar{\uu}$ is the Eulerian velocity of the path of embeddings $\widetilde{\varphi} \star \widetilde{\pp} = (\varphi(t) \circ \pp(t))$. But $\bar{\uu} = \widetilde{\varphi}_{*}\uu + \ww$, where $\ww := \partial_{t} \varphi \circ \varphi^{-1}$ is the drive velocity. We get thus (by remark~\ref{rem:Lie-path} and~\eqref{eq:pullback-and-Lie-derivative})
  \begin{equation*}
    \dmat{\widetilde{\varphi} \star \widetilde{\pp}}{}(\widetilde{\varphi}_{*}\bt) = \widetilde{\varphi}_{*}(\partial_{t}\bt) - \Lie_{\ww}(\widetilde{\varphi}_{*}\bt) + \widetilde{\varphi}_{*}(\Lie_{\uu}\bt) + \Lie_{\ww}(\widetilde{\varphi}_{*}\bt) = \widetilde{\varphi}_{*}\left( \partial_{t}\bt + \Lie_{\uu}\bt\right) = \widetilde{\varphi}_{*}\left(\dmat{\widetilde{\pp}}{\bt}\right).
  \end{equation*}
\end{exam}

The material derivative $\overset{\triangledown}{\bt}$ defined by~\eqref{eq:Oldroyd-derivative} corresponds to the canonical covariant derivative (see remark~\ref{rem:canonical-covariant-derivative}) on the vector bundle $\EE$ associated with the second trivialization~\eqref{eq:E-second-trivialization} $\Emb(\body,\espace) \times \Gamma(\TT(\body))$ described in remark~\ref{rem:embeddings-bundle}. It can thus be recast using lemma~\ref{lem:fundamental-time-derivative-formula} as
\begin{equation}\label{eq:canonical-material-derivative}
  \dmat{\widetilde{\pp}}{\bt} := \widetilde{\pp}_{*}\left(\partial_{t}(\widetilde{\pp}^{*}\bt)\right),
\end{equation}
which allows us for the following fundamental observation: \emph{When symmetric second-order covariant tensor fields are involved, formula~\eqref{eq:canonical-material-derivative} can be interpreted as the push-forward on the deformed configuration of the canonical covariant derivative~\eqref{eq:canonical-covariant-derivative} on $T\Met(\body)$, the manifold of Riemannian metrics}. As there is no reason to limit this interpretation to the canonical covariant derivative on $T\Met(\body)$, this gives us the possibility to produce this way, an infinity of material time derivatives for second-order covariant tensor fields. As detailed in~\autoref{subsec:covariant-derivative-on-TMet}, any covariant derivative on
\begin{equation*}
  T\Met(\body) = \Met(\body) \times \Gamma(S^{2}T^{\star}\body)
\end{equation*}
writes
\begin{equation*}
  D_{t}\bepsilon = \partial_{t} \bepsilon + \Gamma_{\bgamma}(\bgamma_{t},\bepsilon),
\end{equation*}
where $\Gamma_{\bgamma}$ is a bilinear operator that depends on $\bgamma$. We then define a material time derivative on \emph{second-order covariant tensor fields}, by setting
\begin{equation}\label{eq:objective-derivatives-cov}
  \dmat{\widetilde{\pp}}{\bk} := \widetilde{\pp}_{*}\left(D_{t}(\widetilde{\pp}^{*}\bk)\right) = \widetilde{\pp}_{*}\left(\partial_{t}(\widetilde{\pp}^{*}\bk)\right) + \widetilde{\pp}_{*}\left(\Gamma_{\widetilde{\pp}^{*}\bq}(\partial_{t}(\widetilde{\pp}^{*}\bq),\widetilde{\pp}^{*}\bk)\right),
\end{equation}
where $\widetilde{\pp} = (\pp(t))$ is a path of embeddings, $\uu$ its Eulerian velocity, and $\bk$ is a tensor field defined along $\widetilde{\pp}$.

\begin{thm}\label{thm:objective-derivatives}
  Let $D$ be a covariant derivative on $T\Met(\body)$. Then, the material time derivative on symmetric second-order covariant tensor fields $\bk$ induced by $D$ and given by~\eqref{eq:objective-derivatives-cov} is \emph{objective}.
\end{thm}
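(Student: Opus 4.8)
The plan is to deduce the objectivity identity~\eqref{eq:objectivity} from just two elementary facts: a Euclidean isometry $g$ of $(\espace,\bq)$ satisfies $g^{*}\bq=\bq$, and pull-back and push-forward compose functorially, $(\psi\circ\pp)^{*}=\pp^{*}\circ\psi^{*}$, $(\psi\circ\pp)_{*}=\psi_{*}\circ\pp_{*}$, and are mutually inverse, $\psi^{*}\circ\psi_{*}=\id$, for a diffeomorphism $\psi$ of $\espace$ (see~\autoref{sec:pullback-pushforward}). So first I would fix a path of embeddings $\widetilde{\pp}=(\pp(t))$, a path of Euclidean isometries $\widetilde{g}=(g(t))$ of $\espace$, a symmetric second-order covariant tensor field $\bk=\bk(t)$ defined along $\widetilde{\pp}$, and set $\bgamma(t):=\widetilde{\pp}(t)^{*}\bq\in\Met(\body)$, which is a curve in $\Met(\body)$ with velocity $\bgamma_{t}=\partial_{t}(\widetilde{\pp}^{*}\bq)\in T_{\bgamma}\Met(\body)$. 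Recalling that the transported value of $\bk$ along $\widetilde{g}\star\widetilde{\pp}=(g(t)\circ\pp(t))$ is $g_{*}\bk$, the identity~\eqref{eq:objectivity} to be proved reads $\dmat{\widetilde{g}\star\widetilde{\pp}}{}(g_{*}\bk)=g_{*}\bigl(\dmat{\widetilde{\pp}}{}\bk\bigr)$.

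The computation I would then carry out is short. First, the curve in $\Met(\body)$ underlying the transformed path is unchanged,
\[
  (\widetilde{g}\star\widetilde{\pp})(t)^{*}\bq=\pp(t)^{*}\bigl(g(t)^{*}\bq\bigr)=\pp(t)^{*}\bq=\bgamma(t),
\]
since $g(t)^{*}\bq=\bq$; in particular $\partial_{t}\bigl((\widetilde{g}\star\widetilde{\pp})^{*}\bq\bigr)=\bgamma_{t}$. Second, the pull-back to $\body$ of the transported field is also unchanged,
\[
  (\widetilde{g}\star\widetilde{\pp})^{*}(g_{*}\bk)=\widetilde{\pp}^{*}\bigl(g^{*}g_{*}\bk\bigr)=\widetilde{\pp}^{*}\bk.
\]
Since every covariant derivative on $T\Met(\body)$ has the form $D_{t}\bepsilon=\partial_{t}\bepsilon+\Gamma_{\bgamma}(\bgamma_{t},\bepsilon)$ and hence depends on the data only through the curve $\bgamma(t)$ and the tensor field carried along it, the two displays give $D_{t}\bigl((\widetilde{g}\star\widetilde{\pp})^{*}(g_{*}\bk)\bigr)=D_{t}\bigl(\widetilde{\pp}^{*}\bk\bigr)$. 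Finally, applying the definition~\eqref{eq:objective-derivatives-cov} and $(\widetilde{g}\star\widetilde{\pp})_{*}=g_{*}\circ\pp_{*}$,
\[
  \dmat{\widetilde{g}\star\widetilde{\pp}}{}(g_{*}\bk)=(\widetilde{g}\star\widetilde{\pp})_{*}D_{t}\bigl(\widetilde{\pp}^{*}\bk\bigr)=g_{*}\Bigl(\widetilde{\pp}_{*}D_{t}(\widetilde{\pp}^{*}\bk)\Bigr)=g_{*}\Bigl(\dmat{\widetilde{\pp}}{}\bk\Bigr),
\]
which is~\eqref{eq:objectivity}.

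I do not anticipate a genuine obstacle here: once formula~\eqref{eq:objective-derivatives-cov} is in place, the statement is purely formal. The only substantive ingredient is the invariance $g(t)^{*}\bq=\bq$ of the Euclidean metric under isometries, which is precisely what makes the curve $\bgamma(t)$ in $\Met(\body)$, and therefore the Christoffel term $\Gamma_{\bgamma}(\bgamma_{t},\cdot)$, insensitive to the change of observer. It is worth stressing that this is exactly the step that fails for general covariance: for an arbitrary path of diffeomorphisms $\widetilde{\varphi}$ one has $\varphi(t)^{*}\bq\neq\bq$, so $(\widetilde{\varphi}\star\widetilde{\pp})^{*}\bq\neq\widetilde{\pp}^{*}\bq$, the underlying curve in $\Met(\body)$ changes, and the argument breaks down — consistent with the fact that Jaumann-type rates are objective but not general covariant. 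The remaining checks are routine bookkeeping: that $\widetilde{\pp}^{*}\bk\in\Gamma(S^{2}T^{\star}\body)$ when $\bk$ is symmetric covariant of order two, so that $D_{t}$ may legitimately be applied, and that $(\cdot)^{*}$, $(\cdot)_{*}$ and $D_{t}$ are linear, so that $\dmat{\widetilde{\pp}}{}$ defined by~\eqref{eq:objective-derivatives-cov} is indeed a material derivative in the sense of the preceding definition.
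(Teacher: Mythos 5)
Your proof is correct and rests on exactly the same two facts as the paper's: $(g\circ\pp)^{*}\bq=\pp^{*}\bq$ for isometries and the functoriality of pull-back/push-forward. The only (cosmetic) difference is that the paper first splits off $\widetilde{\pp}_{*}\partial_{t}(\widetilde{\pp}^{*}\bk)$ as the already-established general covariant Oldroyd rate and then checks the Christoffel term separately, whereas you treat the whole $D_{t}$ at once by observing that all the pulled-back data on the body are unchanged — a slightly more unified packaging of the same argument.
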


\begin{rem}
  It should be noted, however, that these objective material derivatives have no reason to be, in general, covariant with respect to a non-rigid motion of space.
\end{rem}

\begin{proof}
  We already know that
  \begin{equation*}
    \overset{\triangledown}{\bk} = \widetilde{\pp}_{*}\left(\partial_{t}(\widetilde{\pp}^{*}\bk)\right) = \partial_{t}\bt + \Lie_{\uu}\bk
  \end{equation*}
  is an objective time derivative (and is even general covariant, see example~\ref{exam:Lie-derivative-general-covariance}), so we only need to show that
  \begin{equation*}
    (\widetilde{g} \star \widetilde{\pp})_{*} \left(\Gamma_{(\widetilde{g} \star \widetilde{\pp})^{*}\bq}(\partial_{t}((\widetilde{g} \star \widetilde{\pp})^{*}\bq),(\widetilde{g} \star \widetilde{\pp})^{*}(\widetilde{g}_{*}\bk))\right) = \widetilde{g}_{*} \left(\widetilde{\pp}_{*}\left(\Gamma_{\widetilde{\pp}^{*}\bq}(\partial_{t}(\widetilde{\pp}^{*}\bq),\widetilde{\pp}^{*}\bk)\right)\right),
  \end{equation*}
  but this is true since
  \begin{equation*}
    (\widetilde{g} \star \widetilde{\pp})_{*} = \widetilde{g}_{*} \widetilde{\pp}_{*}, \qquad (\widetilde{g} \star \widetilde{\pp})^{*} = \widetilde{\pp}^{*} \widetilde{g}^{*},
  \end{equation*}
  and $(\widetilde{g} \star \widetilde{\pp})^{*}\bq = \widetilde{\pp}^{*}\bq$, regardless of the path $\widetilde{g}$ of Euclidean isometries in $\espace$.
\end{proof}

Theorem~\ref{thm:objective-derivatives} extends to \emph{second-order contravariant tensor fields}, when the covariant derivative $D$ induced on $T^{\star}\Met(\body)$ (by Leibniz rule) preserves distributions with density. Indeed, in that case, an operator $D_{t}\btheta$ has been introduced for symmetric second-order contravariant tensor fields $\btheta$ on $\body$ (see~\autoref{subsec:covariant-derivative-on-TMet}), which is defined implicitly by the relation
\begin{equation*}
  D_{t}\btheta : \bepsilon+\btheta : D_{t}\bepsilon=\partial_{t}\left(\btheta : \bepsilon\right).
\end{equation*}
This allows to define a material time derivative on \emph{second-order contravariant tensor fields}, by setting
\begin{equation}\label{eq:objective-derivatives-contra}
  \dmat{\widetilde{\pp}}{\btau} := \widetilde{\pp}_{*}\left(D_{t}(\widetilde{\pp}^{*}\btau)\right),
\end{equation}
where $\widetilde{\pp} = (\pp(t))$ is a path of embeddings, $\uu$ is its Eulerian velocity, and $\btau$ is a tensor field defined along $\widetilde{\pp}$. In that case, we have the following \emph{pseudo-Leibniz rule} between objective derivatives of covariant and contravariant symmetric second-order tensor fields, induced by the same covariant derivative $D$ on $\Met(\body)$,
\begin{equation}\label{eq:pseudo-Leibniz-rule}
  \dmat{\widetilde{\pp}}{\btau} : \bk + \btau : \dmat{\widetilde{\pp}}{\bk} = \partial_{t}(\btau :\bk) + \Lie_{\uu}(\btau :\bk) = \widetilde{\pp}_{*}\left(\partial_{t}(\widetilde{\pp}^{*}(\btau :\bk))\right).
\end{equation}

\begin{thm}\label{thm:objectivity-covectors}
  A covariant derivative $D$ on $T\Met(\body)$, which preserves distributions with density, induces an \emph{objective material derivative} on second-order contravariant tensor fields $\btau$, which writes
  \begin{equation*}
    \dmat{\widetilde{\pp}}{\btau} := \widetilde{\pp}_{*}\left(D_{t}(\widetilde{\pp}^{*}\btau)\right),
  \end{equation*}
  where $D_{t}\btheta$ is defined implicitly by the rule
  \begin{equation}\label{eq:Dtheta-Gamma}
    D_{t}\btheta : \bepsilon = \partial_{t}\btheta : \bepsilon - \btheta : \Gamma_{\bgamma}(\bgamma_{t},\bepsilon),
  \end{equation}
  for all second-order covariant tensor fields $\bepsilon$ on $\body$ and all second-order contravariant tensor fields $\btheta$ on $\body$.
\end{thm}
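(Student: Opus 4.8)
The plan is to imitate, almost verbatim, the proof of Theorem~\ref{thm:objective-derivatives}, replacing the covariant derivative $D_{t}$ on $T\Met(\body)$ by the induced operator $\btheta \mapsto D_{t}\btheta$ on densities. The latter is available here precisely because $D$ is assumed to preserve distributions with density, and it is the unique contravariant tensor field satisfying \eqref{eq:Dtheta-Gamma} for every covariant $\bepsilon$ (uniqueness by non-degeneracy of the pairing $\btheta:\bepsilon$). A first, routine, step is to check that $d_{\widetilde{\pp}}/dt$ as defined on contravariant fields by $\dmat{\widetilde{\pp}}{\btau}=\widetilde{\pp}_{*}(D_{t}(\widetilde{\pp}^{*}\btau))$ really is a material derivative, i.e.\ $\RR$-linear and obeying the Leibniz rule with respect to numeric functions. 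This follows by testing \eqref{eq:Dtheta-Gamma} against an arbitrary $\bepsilon$: from $\partial_{t}(f\btheta)=(\partial_{t}f)\btheta+f\,\partial_{t}\btheta$ one reads off $D_{t}(f\btheta)=(\partial_{t}f)\btheta+f\,D_{t}\btheta$, hence $\dmat{\widetilde{\pp}}{}(f\btau)=(\partial_{t}f)\btau+f\,\dmat{\widetilde{\pp}}{}\btau$.

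The crux is the observation that $D_{t}$, regarded as an operator on time-dependent contravariant symmetric tensor fields over $\body$, depends on the path $\widetilde{\pp}$ only through the curve of pulled-back metrics $\bgamma(t)=\widetilde{\pp}(t)^{*}\bq$ and its derivative $\bgamma_{t}$, since this is how the path enters \eqref{eq:Dtheta-Gamma}, through the Christoffel operator $\Gamma_{\bgamma}(\bgamma_{t},\cdot)$. Because $g(t)$ is a $\bq$-isometry, for any path of Euclidean isometries $\widetilde{g}=(g(t))$ we have $(\widetilde{g}\star\widetilde{\pp})^{*}\bq=\widetilde{\pp}^{*}(\widetilde{g}^{*}\bq)=\widetilde{\pp}^{*}\bq=\bgamma$, so $\widetilde{g}\star\widetilde{\pp}$ induces exactly the same curve of metrics as $\widetilde{\pp}$; consequently the operator $D_{t}$ attached to $\widetilde{g}\star\widetilde{\pp}$ is literally the same as the one attached to $\widetilde{\pp}$.

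Given this, objectivity is a formal manipulation with the functoriality of pull-back and push-forward, exactly as in Theorem~\ref{thm:objective-derivatives}. Using $(\widetilde{g}\star\widetilde{\pp})_{*}=\widetilde{g}_{*}\widetilde{\pp}_{*}$, $(\widetilde{g}\star\widetilde{\pp})^{*}=\widetilde{\pp}^{*}\widetilde{g}^{*}$ and $\widetilde{g}^{*}\circ\widetilde{g}_{*}=\id$, one would compute
\begin{equation*}
  \dmat{\widetilde{g}\star\widetilde{\pp}}{}(\widetilde{g}_{*}\btau)=(\widetilde{g}\star\widetilde{\pp})_{*}\bigl(D_{t}\bigl((\widetilde{g}\star\widetilde{\pp})^{*}(\widetilde{g}_{*}\btau)\bigr)\bigr)=\widetilde{g}_{*}\widetilde{\pp}_{*}\bigl(D_{t}(\widetilde{\pp}^{*}\btau)\bigr)=\widetilde{g}_{*}\Bigl(\dmat{\widetilde{\pp}}{}\btau\Bigr),
\end{equation*}
which is the objectivity relation \eqref{eq:objectivity}, since $\widetilde{g}_{*}$ acts fibrewise as $g(t)_{*}$. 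Replacing $\widetilde{g}$ by a path of arbitrary diffeomorphisms $\widetilde{\varphi}$ would break the identity $(\widetilde{\varphi}\star\widetilde{\pp})^{*}\bq=\widetilde{\pp}^{*}\bq$, which is precisely why one obtains objectivity but not, in general, general covariance.

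I expect the only genuinely delicate point to be the naturality claim of the second paragraph: $D_{t}\btheta$ is defined implicitly through the functional $\bepsilon\mapsto\btheta:\Gamma_{\bgamma}(\bgamma_{t},\bepsilon)$ rather than through pointwise data, so one must argue with care that it depends on the path only via $\bgamma$ and $\bgamma_{t}$, and hence is unchanged when $\widetilde{\pp}$ is replaced by $\widetilde{g}\star\widetilde{\pp}$; the rest is bookkeeping. A more conceptual alternative sidesteps this entirely: starting from the pseudo-Leibniz rule \eqref{eq:pseudo-Leibniz-rule}, one pairs $\dmat{\widetilde{\pp}}{}\btau$ against an arbitrary covariant field $\bk$; the general covariance of $\overset{\triangledown}{\bt}$ on scalar fields (Example~\ref{exam:Lie-derivative-general-covariance}) together with the already-established objectivity of $\dmat{\widetilde{\pp}}{}\bk$ (Theorem~\ref{thm:objective-derivatives}) shows that $(\dmat{\widetilde{\pp}}{}\btau):\bk$ transforms objectively for every $\bk$, and non-degeneracy of the duality pairing then forces $\dmat{\widetilde{\pp}}{}\btau$ itself to be objective.
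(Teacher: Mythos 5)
Your proposal is correct, but your primary argument takes a genuinely different route from the paper's. The paper proves objectivity by duality: it pairs $\dmat{\widetilde{\pp}}{}\btau$ against an arbitrary covariant field $\bk$ via the pseudo-Leibniz rule~\eqref{eq:pseudo-Leibniz-rule}, invokes the general covariance of the scalar material derivative of $\btau:\bk$ together with the already-established objectivity of $\dmat{\widetilde{\pp}}{}\bk$ (theorem~\ref{thm:objective-derivatives}), and concludes by non-degeneracy of the pairing --- this is exactly the ``more conceptual alternative'' you sketch in your last paragraph. Your main argument is instead a direct transcription of the proof of theorem~\ref{thm:objective-derivatives} to the contravariant setting: since $D_{t}\btheta$ is characterized by~\eqref{eq:Dtheta-Gamma} purely in terms of objects living on $\body$ (namely $\btheta$, $\Gamma_{\bgamma}$, $\bgamma$ and $\bgamma_{t}$), and since $(\widetilde{g}\star\widetilde{\pp})^{*}\bq=\widetilde{\pp}^{*}\bq$ for a path of isometries, the operator $D_{t}$ attached to $\widetilde{g}\star\widetilde{\pp}$ coincides with the one attached to $\widetilde{\pp}$, and objectivity follows from $(\widetilde{g}\star\widetilde{\pp})^{*}(\widetilde{g}_{*}\btau)=\widetilde{\pp}^{*}\btau$ and $(\widetilde{g}\star\widetilde{\pp})_{*}=\widetilde{g}_{*}\widetilde{\pp}_{*}$. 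The ``delicate point'' you flag is in fact immediate: the implicit definition~\eqref{eq:Dtheta-Gamma} makes no reference to the embedding except through the curve $\bgamma(t)$, so naturality under $\widetilde{\pp}\mapsto\widetilde{g}\star\widetilde{\pp}$ is built in (uniqueness of $D_{t}\btheta$ coming from non-degeneracy of the pairing, as you note). Your direct route is arguably shorter and makes the mechanism more transparent; the paper's duality route has the merit of reducing the contravariant case entirely to the covariant one without ever examining how $D_{t}\btheta$ is constructed, which is convenient when one only knows that $D$ preserves distributions with density in the integrated sense rather than pointwise.
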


\begin{proof}[Proof of theorem~\ref{thm:objectivity-covectors}]
  Let $\widetilde{\pp}= (\pp(t))$ be a path of embeddings and $\widetilde{g}=(g(t))$, a path of Euclidean isometries. We have to show that
  \begin{equation*}
    \dmat{\widetilde{g} \star \widetilde{\pp}}{(\widetilde{g}_{*}\btau)} = \widetilde{g}_{*}\left(\dmat{\widetilde{\pp}}{\btau}\right),
  \end{equation*}
  for all symmetric second-order contravariant vector fields $\btau$, defined along $\widetilde{\pp}$. By virtue of~\eqref{eq:pseudo-Leibniz-rule} and the general covariance of the material derivative
  \begin{equation*}
    \widetilde{\pp}_{*}\left(\partial_{t}(\widetilde{\pp}^{*}(\btau :\bk))\right) = \partial_{t}(\btau :\bk) + \Lie_{\uu}(\btau :\bk),
  \end{equation*}
  we have
  \begin{align*}
    \dmat{\widetilde{g} \star \widetilde{\pp}}{}(\widetilde{g}_{*}\btau) : \widetilde{g}_{*}\bk & = \widetilde{g}_{*}(\partial_{t}(\btau :\bk) + \Lie_{\uu}(\btau :\bk)) -  \widetilde{g}_{*}\btau : \dmat{\widetilde{g} \star \widetilde{\pp}}{}(\widetilde{g}_{*}\bk)
    \\
                                                                                                & = \widetilde{g}_{*}(\partial_{t}(\btau :\bk) + \Lie_{\uu}(\btau :\bk)) - \widetilde{g}_{*}\btau : \widetilde{g}_{*} \left(\dmat{\widetilde{\pp}}{\bk}\right)
    \\
                                                                                                & = \widetilde{g}_{*} \left( \partial_{t}(\btau :\bk) + \Lie_{\uu}(\btau :\bk) - \btau :  \left(\dmat{\widetilde{\pp}}{\bk}\right) \right)
    \\
                                                                                                & = \widetilde{g}_{*}\left(\dmat{\widetilde{\pp}}{\btau} :\bk \right) = \widetilde{g}_{*}\left(\dmat{\widetilde{\pp}}{\btau}\right) : \widetilde{g}_{*}\bk,
  \end{align*}
  for any $\btau$ and $\bk$, and thus
  \begin{equation*}
    \dmat{\widetilde{g} \star \widetilde{\pp}}{}(g_{*}\btau)  = \widetilde{g}_{*} \left(\dmat{\widetilde{\pp}}{\btau}\right).
  \end{equation*}
\end{proof}

\emph{Distributions with density} in $T^{\star}_{\bgamma}\Met(\body)$ could have been formulated using the Riemannian volume $\vol_{\bgamma}$, rather than the mass measure $\mu$. In other words, writing
\begin{equation*}
  \mathcal{P}(\bepsilon) = \int_{\body} (\msigma:\bepsilon) \vol_{\bgamma},
\end{equation*}
instead of
\begin{equation*}
  \mathcal{P}(\bepsilon) = \int_{\body} (\btheta:\bepsilon) \mu,
\end{equation*}
with $\mu =\rho_{\bgamma} \,\vol_{\bgamma}$, $\rho_{\bgamma} = \pp^{*}\rho$, $\msigma:=p^{*}\bsigma$ (the Noll stress tensor) and $\btheta:=p^{*}\btau$ (the Rougée stress tensor). Note that each covariant derivative $D$ on $T\Met(\body)$, which preserves distributions with density relative to the mass measure $\mu$, preserves also distributions with density relative to the volume measure $\vol_{\bgamma}$, and \textit{vice versa}. Moreover, we have expression~\eqref{eq:Dtheta-Gamma} for $D_{t}\btheta$, while we have

\begin{equation}\label{eq:volume-derivative-density}
  \overline{D}_{t}\msigma := D_{t}\msigma + \frac{1}{2} \tr(\bgamma^{-1}\bgamma_{t})\, \msigma,
\end{equation}
for all second-order contravariant tensor fields $\msigma$ defined on $\body$. If one prefers to keep the definition of a distribution with density relatively to the mass measure $\mu$, then expression~\eqref{eq:volume-derivative-density} corresponds to the following covariant derivative on $T\Met(\body)$
\begin{equation*}
  \overline{D}_{t}\bepsilon := D_{t}\bepsilon - \frac{1}{2} \tr(\bgamma^{-1}\bgamma_{t})\bepsilon,
\end{equation*}
and
\begin{equation*}
  \overline{D}_{t}\msigma : \bepsilon+\msigma : \bar{D}_{t}\bepsilon=\partial_{t}\left(\msigma : \bepsilon\right).
\end{equation*}
From these observations, we deduce that for each covariant derivative $D$ on $T\Met(\body)$, which preserves distributions with density, we obtain two \emph{objective material derivatives} on symmetric contravariant second-order tensor fields. The first one writes
\begin{equation*}
  \dmat{\widetilde{\pp}}{\btau} := \pp_{*}\left(D_{t}(\pp^{*}\btau)\right),
\end{equation*}
and the second one writes
\begin{equation*}
  \frac{\bar{d}_{\widetilde{\pp}}\bsigma}{dt} := \pp_{*}\left(\bar{D}_{t}(\pp^{*}\bsigma)\right) = \dmat{\widetilde{\pp}}{\bsigma} + (\dive \uu)\bsigma,
\end{equation*}
where $\uu$ is the (right) Eulerian velocity, because
\begin{equation*}
  \pp_{*}\left(\frac{1}{2} \tr(\bgamma^{-1}\bgamma_{t})\msigma\right) = \tr\left(\bq^{-1}\bd\right)\bsigma = \left(\dive \uu\right)\bsigma.
\end{equation*}

\begin{rem}\label{rem:Truesdell-trick}
  As observed by Truesdell~\cite{Tru1955} for the special case $D_{t} = \partial_{t}$, this second objective material derivative can be recast as
  \begin{equation*}
    \frac{\bar{d}_{\widetilde{\pp}}\bsigma}{dt} = \rho \dmat{\widetilde{\pp}}{}\left(\frac{\bsigma}{\rho}\right),
  \end{equation*}
  since
  \begin{equation*}
    \rho\, \pp_{*}\left(\partial_{t}\pp^{*} \left(\frac{1}{\rho}\right)\right) = \dive \uu,
  \end{equation*}
  due to mass conservation $\rho_{t} + \dive(\rho \uu) = 0$.
\end{rem}

\section{A converse theorem for local objective derivatives}
\label{sec:converse-theorem}

So far, we have proved, that each covariant derivative on $T\Met(\body)$ induces an objective derivative on symmetric second-order tensor fields. More precisely, given any covariant derivative $D$ on $T\Met(\body)$, it induces a material time derivative
\begin{equation*}
  \dmat{\widetilde{\pp}}{\bk_{\widetilde{\pp}}} = \widetilde{\pp}_{*}\left(D_{t}( \widetilde{\pp}^{*}\bk_{\widetilde{\pp}})\right),
\end{equation*}
on symmetric covariant second-order tensor fields $\bk_{\widetilde{\pp}}$ defined along a path of embeddings $\widetilde{\pp}$. This material time derivative is moreover objective (theorem~\ref{thm:objective-derivatives}), which means that
\begin{equation*}
  \left(\dmat{\widetilde{g} \star \widetilde{\pp}}{}\bk_{\widetilde{g} \star \widetilde{\pp}}\right)(t) = g(t)_{*}\left(\dmat{\widetilde{\pp}}{}\bk_{\widetilde{\pp}}\right)(t),
\end{equation*}
for each path of Euclidean isometries $\widetilde{g}=(g(t))$. This covariant derivative on $T\Met(\body)$ induces moreover a covariant derivative on $T^{\star}\Met(\body)$, using the Leibniz rule, and if it preserves distributions with densities (which is in practice always the case), it induces an objective derivative on symmetric contravariant second-order tensor fields
\begin{equation*}
  \dmat{\widetilde{\pp}}{\btau_{\widetilde{\pp}}} = \widetilde{\pp}_{*}\left(D_{t}(\widetilde{\pp}^{*}{\btau_{\widetilde{\pp}}})\right).
\end{equation*}

But what about the converse? In other words, given an objective material time derivative on symmetric second-order covariant tensor fields, is it always induced by a covariant derivative on $T\Met(\body)$? At this level of generality, it is highly improbable that the converse is true. One argument in favor of this claim is that the Nash mapping
\begin{equation*}
  \mathcal{N} : \Emb(\body,\espace) \to \Met(\body), \qquad p \mapsto \bgamma = \pp^{*}\bq
\end{equation*}
is far from being surjective, all metrics $\bgamma$ in its range having vanishing curvature. The question is thus, are we able to describe all known objective rates by a covariant derivative on $T\Met(\body)$? To answer this question, we have first to formulate the problem accurately. This will lead us to prove a partial -- but fully mechanistic -- converse of theorem~\ref{thm:objective-derivatives}.

As we have already stated, each material time derivative -- therefore each objective derivative -- is in fact a covariant derivative $D$ on the vector bundle $\EE$ described in remark~\ref{rem:embeddings-bundle}. Consider now the special case, where the tensor type $\TT = {S^{2}\RR^{3}}^{\star}$ is chosen to be the vector space of symmetric covariant second-order tensors on $\RR^{3}$. This vector bundle $\EE$, with base space $\Emb(\body,\espace)$, is thus the union of vector spaces, noted $\EE_{\pp} = \Cinf(\Omega_{\pp}, {S^{2}\RR^{3}}^{\star})$, of symmetric covariant second-order tensor fields defined on $\Omega_{\pp} = \pp(\body)$. On this vector bundle, there is a preferred covariant derivative (see remark~\ref{rem:canonical-covariant-derivative} and remark~\ref{rem:embeddings-bundle}), the canonical covariant derivative associated with the second trivialization
\begin{equation}\label{eq:second-triv}
  \Psi_{2}: \: \EE \to \Emb(\body,\espace) \times \Gamma({S^{2}\RR^3}^\star),
  \qquad
  \Psi_{2}(\bt_{\pp}) = (\pp, \pp^{*}\bt_{\pp}),
\end{equation}
which writes
\begin{equation*}
  \overset{\triangledown}{\bk} := \pp_{*} \partial_{t} (\pp^{*} \bk) = \partial_{t}\bk + \Lie_{\uu}\bk,
\end{equation*}
and corresponds to the Lie rate (or Oldroyd rate, see section \ref{subsec:Oldroyd}) described in example \ref{exam:Lie-derivative-general-covariance}.

Why is $\overset{\triangledown}{\bk}$ preferred? Just because it is general covariant. However, any other material time derivative, for instance the particle derivative $\dot{\bk}$ of example~\ref{exam:particle-derivative}, could have been chosen to fix an ``origin'' of the affine space of covariant derivatives\footnote{Indeed, this choice seems to have been implicitly adopted by many authors \cite{Hil1978,XBM1998,XBM1998a}.} on $\EE$. Therefore, with our choice of this ``origin'', each covariant derivative on $\EE$ writes
\begin{equation}\label{eq:Olroyd-choice}
  D_{t}\bk = \overset{\triangledown}{\bk} + \Gamma_{\pp}(\pp_{t},\bk),
\end{equation}
where $\bk = \bk_{\tilde{\pp}}$ is a section of the vector bundle $\EE$, defined along the path of embeddings $\tilde{\pp}$, and
\begin{equation*}
  \Gamma_{\pp}: T_{\pp}\Emb(\body,\RR^{3}) \times \EE_{\pp} \to \EE_{\pp}, \qquad (\VV_{\pp},\bk_{\pp}) \mapsto \Gamma_{\pp}(\VV_{\pp},\bk_{\pp})
\end{equation*}
is a bilinear operator.

By the way, the group of diffeomorphisms $\Diff(\espace)$ acts on $\EE$ by the formula
\begin{equation*}
  \varphi \star \bk_{\pp} := \varphi_{*}\bk_{\pp},
\end{equation*}
which is a vector bundle isomorphism, and sends linearly the fiber $\EE_{\pp}$ onto the fiber $\EE_{\varphi \circ \pp}$. The remarkable fact is that, through the second trivialization $\Psi_{2}$, this action reduces to the trivial action on each fiber. Indeed, we have
\begin{equation*}
  \varphi \star (\pp,\bepsilon) := \Psi_{2}(\varphi_{*}\bk_{\pp}) = (\varphi \circ \pp, (\varphi \circ \pp)^{*}(\varphi_{*}\bk_{\pp})) = (\varphi \circ \pp, \bepsilon), \qquad \bepsilon = \pp^{*}\bk_{\pp}.
\end{equation*}

Therefore, thanks to the fact that Oldroyd's rate $\overset{\triangledown}{\bk}$ is general covariant, a covariant derivative $D$ on $\EE$, written as~\eqref{eq:Olroyd-choice}, is general covariant (respectively objective), if and only if
\begin{equation}\label{eq:general-covariance-condition}
  \Gamma_{\varphi(t)\circ\pp(t)}((\varphi(t)\circ\pp(t))_{t}, \bk_{\varphi(t)\circ\pp(t)}) = \varphi(t)_{*}\Gamma_{\pp(t)}(\pp_{t}(t),\bk_{\pp(t)}),
\end{equation}
for any path $\widetilde{\varphi} = (\varphi(t))$ of diffeomorphisms (respectively isometries), any path of embeddings $\widetilde{\pp} = (\pp(t))$, and any path of tensor fields $\bk_{\widetilde{\pp}}$ defined along the path $\widetilde{\pp}$. Using the second trivialization~\eqref{eq:second-triv}, and setting
\begin{equation*}
  \widetilde{\Gamma}_{\pp}(\VV,\bepsilon) := \pp^{*}\Gamma_{\pp}(\VV,\pp_{*}\bepsilon),
\end{equation*}
we can recast condition~\eqref{eq:general-covariance-condition} on $\widetilde{\Gamma}$ as
\begin{equation}\label{eq:Christoffel-covariance}
  \widetilde{\Gamma}_{\varphi(t)\circ\pp(t)}((\varphi(t)\circ\pp(t))_{t},\bepsilon) = \widetilde{\Gamma}_{\pp(t)}(\pp_{t}(t),\bepsilon),
\end{equation}
for any path $\widetilde{\varphi} = (\varphi(t))$ of diffeomorphisms (respectively isometries), any path of embeddings $\widetilde{\pp} = (\pp(t))$, and every tensor fields $\bepsilon \in \Gamma({S^{2}\RR^{3}}^{\star}(\body))$.

Unfortunately, at this level of generality, we have no chance to solve~\eqref{eq:Christoffel-covariance}.
However, if we restrict the class of these operators $\widetilde{\Gamma}_{p}(\VV,\bepsilon)$ to the ones which depend only on $1$-jets of $\pp$ and $\VV$, and on $0$-jets of $\bepsilon$, then we are able to prove a converse of theorem~\ref{thm:objective-derivatives}. We will call such operators \emph{local}, since this terminology is consistent with what is usually called \emph{a local formulation of Continuum Mechanics}. Therefore, we restrict the problem to operators $\widetilde{\Gamma}_{p}(\VV,\bepsilon)$ which are such that
\begin{equation}\label{eq:locality}
  \widetilde{\Gamma}_{\pp}(\VV,\bepsilon)(\XX) = \Upsilon_{(\pp(\XX),\bF(\XX))}\left((\VV(\XX),\bF_{t}(\XX)),\bepsilon(\XX)\right), \qquad \forall \XX \in \body,
\end{equation}
where
\begin{equation*}
  \Upsilon:\: (\RR^{3} \times M_{3}(\RR)) \times (\RR^{3} \times M_{3}(\RR)) \times {S^{2}\RR^{3}}^{\star} \to {S^{2}\RR^{3}}^{\star}
\end{equation*}
is a smooth mapping, which is bilinear in the couple of variables $(\VV(\XX),\bF_{t}(\XX))$ and $\bepsilon(\XX)$. Here, we have implicitly interpreted $\pp$ and $\VV$ as vector valued functions with values in $\RR^{3}$ and used a chart around $\XX \in \body$, so that
\begin{equation*}
  \bF(\XX) := \left(\frac{\partial \pp^{i}}{\partial X^{J}}(\XX)\right), \qquad \bF_{t}(\XX) := \left(\frac{\partial V^{i}}{\partial X^{J}}(\XX)\right),
\end{equation*}
are interpreted as square matrices of size 3. Besides, we denote by $(\cdot)^{s}$ and $(\cdot)^a$, respectively the symmetric and the skew-symmetric parts of a mixed tensor in the Euclidean space $(\RR^{3},\bq)$.

\begin{lem}\label{lem:reduction-lemma}
  Let $D$ be a covariant derivative on $\EE$ as defined by \eqref{eq:Olroyd-choice}, and suppose that $D$ is local, meaning that it satisfies~\eqref{eq:locality}.
  \begin{enumerate}
    \item If $D$ is general covariant, then, $\widetilde{\Gamma}\equiv 0$.
    \item If $D$ is objective, then we have
          \begin{equation}\label{eq:def-upsilon}
            \widetilde{\Gamma}_{\pp}(\VV,\bepsilon)(\XX) = \Upsilon_{\bF(\XX)}\left((\bF_{t}\bF^{-1})^{s}(\XX),\bepsilon(\XX)\right), \qquad \forall \XX \in \body,
          \end{equation}
          where
          \begin{equation}\label{eq:final-objectivity-covariance}
            \Upsilon_{Q\bF(\XX)}\left(Q(\bF_{t}\bF^{-1})^{s}(\XX)Q^{-1},\bepsilon(\XX)\right) = \Upsilon_{\bF(\XX)}\left((\bF_{t}\bF^{-1})^{s}(\XX),\bepsilon(\XX)\right), \qquad \forall Q \in \SO(3).
          \end{equation}
  \end{enumerate}
\end{lem}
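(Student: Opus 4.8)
The plan is to probe the covariance identity~\eqref{eq:Christoffel-covariance} with a carefully chosen family of paths $\widetilde{\varphi}=(\varphi(t))$ of diffeomorphisms of $\espace$ (resp.\ Euclidean isometries), keeping $\widetilde{\pp}$ and $\bepsilon$ fixed, and to read off the constraints this forces on the local symbol $\Upsilon$ of~\eqref{eq:locality}. Since $\widetilde{\Gamma}$ is local, evaluating~\eqref{eq:Christoffel-covariance} at an interior point $\XX\in\body$ and at $t=0$ involves only the $1$-jets at $\XX$ of $\pp$ and of $\VV=\pp_{t}$, together with $\bepsilon(\XX)$; moreover these $1$-jets may be prescribed arbitrarily (with $\bF(\XX)\in\mathrm{GL}(3)$, automatic since $\pp$ is an embedding of a $3$-manifold), because only the germ of $\widetilde{\pp}$ at $\XX$ is relevant, and $\Upsilon$ is then determined everywhere by smoothness. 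So the first thing I would do is record how the $1$-jet transforms under left composition: if $\varphi(t)$ has spatial Jacobian $\bG(t,\xx)$ at $\xx$, then $\varphi(t)\circ\pp(t)$ has, at $\XX$, deformation gradient $\bG(t,\pp(t,\XX))\,\bF(t,\XX)$, Lagrangian velocity $(\partial_{t}\varphi(t))(\pp(t,\XX))+\bG(t,\pp(t,\XX))\,\VV(t,\XX)$, and velocity gradient equal to the $t$-derivative of the deformation gradient.

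First I would use translations. Taking $\varphi(\xx)=\xx+\ba$ constant in $t$ leaves every gradient unchanged and merely shifts the base point, so~\eqref{eq:Christoffel-covariance} forces $\Upsilon_{(\pp(\XX),\bF)}$ to be independent of $\pp(\XX)$. Then, taking $\varphi(t)(\xx)=\xx+t\ba$, whose drive velocity is the spatially constant vector $\ba$, only shifts the Lagrangian velocity and again leaves the gradients untouched, forcing $\Upsilon$ to be independent of $\VV(\XX)$ as well. After these two reductions $\Upsilon=\Upsilon_{\bF}(\bF_{t},\bepsilon)$ is a smooth function of $\bF\in\mathrm{GL}(3)$, linear in $\bF_{t}\in M_{3}(\RR)$ and in $\bepsilon$.

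Next I would use time-dependent maps $\varphi(t)(\xx)=A(t)\xx$ with $A(0)=\Id$. At $t=0$ the deformation gradient is still $\bF$ while the velocity gradient becomes $\dot{A}(0)\bF+\bF_{t}$; since $\Upsilon$ no longer sees the position or $\VV$, identity~\eqref{eq:Christoffel-covariance} reads $\Upsilon_{\bF}(\dot{A}(0)\bF+\bF_{t},\bepsilon)=\Upsilon_{\bF}(\bF_{t},\bepsilon)$, hence, by linearity in the velocity-gradient slot, $\Upsilon_{\bF}(\bOmega\bF,\bepsilon)=0$ for every $\bOmega$ realized as $\dot{A}(0)$. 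In the general covariant case $\bOmega$ ranges over all of $M_{3}(\RR)$; since $\bF$ is invertible, $\bOmega\bF$ sweeps out $M_{3}(\RR)$, and therefore $\Upsilon\equiv 0$, which is assertion~(1). In the objective case I would instead take $\varphi(t)(\xx)=Q(t)\xx$ with $Q(t)\in\SO(3)$, so $\bOmega=\dot{Q}(0)$ is an arbitrary skew-symmetric matrix and the same computation gives $\Upsilon_{\bF}(\bOmega\bF,\bepsilon)=0$ for all $\bOmega\in\mathfrak{so}(3)$. Writing $\bF_{t}=(\bF_{t}\bF^{-1})^{s}\bF+(\bF_{t}\bF^{-1})^{a}\bF$ and invoking bilinearity, this shows $\Upsilon_{\bF}(\bF_{t},\bepsilon)$ depends on $\bF_{t}$ only through $(\bF_{t}\bF^{-1})^{s}$; re-indexing $\Upsilon$ by that symmetric argument gives~\eqref{eq:def-upsilon}. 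Finally, to obtain~\eqref{eq:final-objectivity-covariance} I would apply~\eqref{eq:Christoffel-covariance} to a constant rotation $\varphi(\xx)=Q\xx$, $Q\in\SO(3)$: this replaces $\bF$ by $Q\bF$ and $\bF_{t}$ by $Q\bF_{t}$, hence $\bF_{t}\bF^{-1}$ by $Q(\bF_{t}\bF^{-1})Q^{-1}$, and since orthogonal conjugation commutes with the $\bq$-symmetric/skew splitting of a mixed tensor it replaces $(\bF_{t}\bF^{-1})^{s}$ by $Q(\bF_{t}\bF^{-1})^{s}Q^{-1}$; substituting, \eqref{eq:Christoffel-covariance} becomes precisely~\eqref{eq:final-objectivity-covariance}.

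The step I expect to require the most care is the time-dependent-map step: one has to track the inhomogeneous term $\dot{A}(0)\bF$ (resp.\ $\dot{Q}(0)\bF$) that appears in the velocity gradient of $\varphi(t)\circ\pp(t)$, since it is exactly this term — combined with the bilinearity hypothesis and the invertibility of $\bF$ — that collapses the dependence of $\widetilde{\Gamma}$ on the full velocity gradient down to (in the objective case) its $\bq$-symmetric part, or (in the general covariant case) kills it entirely. The remaining ingredients — the jet-transformation formulas under left composition, the realization of prescribed $1$-jets by genuine paths of embeddings, and the commutation of $(\cdot)^{s}$, $(\cdot)^{a}$ with orthogonal conjugation — are routine bookkeeping.
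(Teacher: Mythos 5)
Your proposal is correct and follows essentially the same route as the paper: both probe the covariance condition~\eqref{eq:Christoffel-covariance} with specific one-parameter families of affine motions (static and time-dependent translations to kill the dependence on $\pp(\XX)$ and $\VV(\XX)$, then $t$-dependent linear maps resp.\ rotations whose inhomogeneous contribution $\dot A(0)\bF$ to the velocity gradient, combined with bilinearity and the invertibility of $\bF$, eliminates the full velocity gradient in the general covariant case and its skew part in the objective case, with a final constant rotation yielding~\eqref{eq:final-objectivity-covariance}). The only differences are cosmetic: the paper changes variables to $\bF_{t}\bF^{-1}$ before specializing and orders the reductions slightly differently, while you work with $\bF_{t}$ directly and split it as $(\bF_{t}\bF^{-1})^{s}\bF+(\bF_{t}\bF^{-1})^{a}\bF$.
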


\begin{proof}
  The action of a (not necessarily rigid) affine motion $\varphi(t,\xx) = P(t)\xx + \bc(t)$ on $\pp$, $\VV=\pp_{t}$ and $\bepsilon$ translates (pointwise) on the arguments of $\Upsilon$, as follows
  \begin{gather}\label{eq:affine-action}
    \overline{\pp(\XX)} = P\pp(\XX) + \bc, \qquad \overline{\bF(\XX)} = P\bF(\XX), \qquad \overline{\VV(\XX)} = P\VV(\XX) + P_{t}\pp(\XX) + \bc_{t},
    \\
    \overline{\bF_{t}(\XX)} = P\bF_{t}(\XX) + P_{t}\bF(\XX), \qquad \overline{\bepsilon(\XX)} = \bepsilon(\XX),
  \end{gather}
  and the covariance condition~\eqref{eq:Christoffel-covariance}, as
  \begin{multline}\label{eq:Upsilon-covariance}
    \Upsilon_{(P\pp(\XX) + \bc, P\bF(\XX))}\left((P\VV(\XX)+ P_{t}\pp(\XX) + \bc_{t}, P\bF_{t}(\XX) + P_{t}\bF(\XX)), \bepsilon(\XX),\right)
    \\
    = \Upsilon_{(\pp(\XX),\bF(\XX))}\left((\VV(\XX),\bF_{t}(\XX)),\bepsilon(\XX)\right).
  \end{multline}
  We will first make a change of variables and replace the argument $\bF_{t}(\XX)$ of $\Upsilon$ by $\bF_{t}(\XX)\bF(\XX)^{-1}$, with
  \begin{equation*}
    \overline{\bF_{t}(\XX)\bF(\XX)^{-1}} = P(\bF_{t}(\XX)\bF(\XX)^{-1})P^{-1} + \bM,
  \end{equation*}
  where $\bM = P_{t}P^{-1}$. The covariance condition~\eqref{eq:Upsilon-covariance} recasts then as
  \begin{multline}\label{eq:final-Upsilon-covariance}
    \Upsilon_{(P\pp(\XX) + \bc, P\bF(\XX))}\left((P\VV(\XX) + P_{t}\pp(\XX) + \bc_{t}, P(\bF_{t}(\XX)\bF(\XX)^{-1})P^{-1} + \bM),\bepsilon(\XX)\right) =
    \\
    \Upsilon_{(\pp(\XX),\bF(\XX))}\left((\VV(\XX),\bF_{t}(\XX)\bF(\XX)^{-1}),\bepsilon(\XX)\right).
  \end{multline}

  Assume first that $D$ is general covariant. Then, taking $P(t) = I$ and $\bc(t)= t\bc_{0}$ and evaluating~\eqref{eq:final-Upsilon-covariance} at $t=0$, we deduce that $\Upsilon$ does not depend on $\VV(\XX)$. Set now $P(t) = \exp(t\bM)$ and $\bc(t) = 0$, where $\bM$ is a fix square matrix of size 3. We conclude, then, evaluating \eqref{eq:final-Upsilon-covariance} at $t=0$ that $\Upsilon$ does not depend on $(\bF_{t}\bF^{-1})(\XX)$. We have therefore
  \begin{equation*}
    \Upsilon_{(\pp(\XX),\bF(\XX))}\left((\VV(\XX),\bF_{t}(\XX)\bF(\XX)^{-1}),\bepsilon(\XX)\right) = \Upsilon_{(\pp(\XX),\bF(\XX))}\left((0,0),\bepsilon(\XX)\right) = 0,
  \end{equation*}
  and hence, $\widetilde{\Gamma}$ vanishes, which proves point (1).

  Assume now that $D$ is objective and hence that~\eqref{eq:final-Upsilon-covariance} holds only when $P = Q$ is a rotation. Then, the argument above, which shows that $\Upsilon$ does not depend on $\VV(\XX)$, still holds. Taking, now, $Q(t) = \exp(t\bLambda)$, where $\bLambda$ is a skew-symmetric matrix and $\bc(t) = 0$, we conclude, as above, that $\Upsilon$ does not depend on $(\bF_{t}\bF^{-1})^{a}(\XX)$. Finally, taking $Q(t) = I$ and $\bc(t)= \bc_{0}$ in~\eqref{eq:final-Upsilon-covariance}, we deduce that $\Upsilon$ does not depend on $\pp(\XX)$. We conclude that $\Upsilon$ depends only on $\bF(\XX)$, $(\bF_{t}\bF^{-1})^{s}(\XX)$ and $\bepsilon(\XX)$. Therefore, and with a slight abuse of notations, we will write
  \begin{equation*}
    \widetilde{\Gamma}_{\pp}(\VV,\bepsilon)(\XX) = \Upsilon_{\bF(\XX)}\left((\bF_{t}\bF^{-1})^{s}(\XX),\bepsilon(\XX)\right) := \Upsilon_{(0,\bF(\XX))}\left((0,(\bF_{t}(\XX)\bF(\XX)^{-1})^{s}),\bepsilon(\XX)\right),
  \end{equation*}
  where the invariance condition
  \begin{equation}\label{eq:InvQ}
    \Upsilon_{Q\bF(\XX)}\left(Q(\bF_{t}\bF^{-1})^{s}(\XX)Q^{-1},\bepsilon(\XX)\right) = \Upsilon_{\bF(\XX)}\left((\bF_{t}\bF^{-1})^{s}(\XX),\bepsilon(\XX)\right),
  \end{equation}
  holds, for all $Q \in \SO(3)$. This shows point (2) and ends the proof.
\end{proof}

Since its importance, the following obvious corollary of lemma~\ref{lem:reduction-lemma} will be stated as a theorem.

\begin{thm}\label{thm:unicity-general-covariance}
  The Oldroyd objective derivative (Lie derivative) $\overset{\triangledown}{\bk}$ is the unique local material derivative which is general covariant.
\end{thm}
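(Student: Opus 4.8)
The plan is to read the theorem off point~(1) of Lemma~\ref{lem:reduction-lemma}; the only work is to unwind the definitions so that the asserted uniqueness becomes exactly the conclusion $\widetilde{\Gamma}\equiv 0$ of that lemma.

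First I would check that $\overset{\triangledown}{\bk}$ really is a local material derivative which is general covariant. In the parametrization~\eqref{eq:Olroyd-choice} of covariant derivatives on the bundle $\EE$ (with tensor type $\TT = {S^{2}\RR^{3}}^{\star}$), the Oldroyd rate corresponds to the Christoffel operator $\Gamma_\pp\equiv 0$, hence to $\widetilde{\Gamma}_\pp\equiv 0$, and this trivially satisfies the locality requirement~\eqref{eq:locality} with $\Upsilon\equiv 0$. Its general covariance is exactly example~\ref{exam:Lie-derivative-general-covariance}. So the content of the theorem is the uniqueness statement.

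For uniqueness, let $D$ be any local material derivative on $\EE$ which is general covariant, and write it as in~\eqref{eq:Olroyd-choice}, $D_t\bk = \overset{\triangledown}{\bk} + \Gamma_\pp(\pp_t,\bk)$, with associated operator $\widetilde{\Gamma}_\pp(\VV,\bepsilon) := \pp^{*}\Gamma_\pp(\VV,\pp_{*}\bepsilon)$ of the local form~\eqref{eq:locality}. Point~(1) of Lemma~\ref{lem:reduction-lemma} then gives $\widetilde{\Gamma}\equiv 0$. For each embedding $\pp$, the maps $\pp^{*}$ and $\pp_{*}$ are mutually inverse linear isomorphisms between $\EE_\pp = \Cinf(\Omega_\pp,{S^{2}\RR^{3}}^{\star})$ and $\Gamma({S^{2}\RR^{3}}^{\star}(\body))$, so the vanishing of $\widetilde{\Gamma}_\pp$ is equivalent to the vanishing of $\Gamma_\pp$, for every $\pp$. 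Hence $D_t\bk = \overset{\triangledown}{\bk}$ for every section $\bk$ along every path of embeddings, i.e. $D$ coincides with the Oldroyd derivative.

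There is no genuine obstacle here beyond Lemma~\ref{lem:reduction-lemma} itself; the single step that deserves a line of justification is the passage from $\widetilde{\Gamma}\equiv 0$ to $\Gamma_\pp\equiv 0$, which is immediate from the fact that push-forward and pull-back along a fixed embedding are mutually inverse vector bundle isomorphisms of $\EE$. It is worth stressing that the hypothesis of locality is what makes the argument go through: Lemma~\ref{lem:reduction-lemma} was established only for Christoffel operators depending on the first jet of $\pp$, $\VV$ and the $0$-jet of $\bepsilon$, and without this restriction the covariance equation~\eqref{eq:Christoffel-covariance} is not known to force $\widetilde{\Gamma}\equiv 0$ — consistently with the caveats recorded in the earlier footnote on connections in infinite dimension.
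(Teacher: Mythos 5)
Your proposal is correct and follows exactly the paper's route: the paper itself presents this theorem as an ``obvious corollary'' of point~(1) of the reduction lemma, which is precisely the argument you give. Your extra remarks (checking that the Oldroyd rate is itself local and general covariant, and that $\widetilde{\Gamma}\equiv 0$ forces $\Gamma_{\pp}\equiv 0$ because pull-back and push-forward are mutually inverse) simply make explicit the details the paper leaves to the reader.
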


We will now state our expected converse result of theorem~\ref{thm:objective-derivatives} concerning local objective derivatives.

\begin{thm}\label{thm:converse-objectivity}
  Let
  \begin{equation*}
    \dmat{\widetilde{\pp}}{\bk}= \overset{\triangledown}{\bk} + \Gamma_{\pp}(\pp_{t},\bk),
  \end{equation*}
  be a local objective derivative on symmetric second-order covariant tensor fields $\bk$, as defined by~\eqref{eq:objectivity} and \eqref{eq:locality}. Then, there exists a covariant derivative $D$ on $T\Met(\body)$, such that
  \begin{equation*}
    \dmat{\widetilde{\pp}}{\bk} := \widetilde{\pp}_{*}\left(D_{t}(\widetilde{\pp}^{*}\bk)\right).
  \end{equation*}
  In other words, an objective derivative on symmetric second-order covariant tensor fields which depends only on the first jets of $\pp$ and $\pp_{t}=\partial_{t} \pp$ and on zero jets of $\bk$ is equivalent to a covariant derivative on $T\Met(\body)$. Moreover, such a derivative on $T\Met(\body)$ preserves distributions with densities and is thus also equivalent to an objective derivative on symmetric second-order contravariant tensor fields, by the (pseudo) Leibniz rule~\eqref{eq:pseudo-Leibniz-rule}.
\end{thm}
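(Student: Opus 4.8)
The strategy is to take the local objective derivative, invoke Lemma~\ref{lem:reduction-lemma}(2) to get the reduced form of its Christoffel operator on $\EE$, and then show that this reduced form descends to a well-defined Christoffel operator on $T\Met(\body)$. Concretely, by Lemma~\ref{lem:reduction-lemma}, objectivity forces
\begin{equation*}
  \widetilde{\Gamma}_{\pp}(\VV,\bepsilon)(\XX) = \Upsilon_{\bF(\XX)}\left((\bF_{t}\bF^{-1})^{s}(\XX),\bepsilon(\XX)\right),
\end{equation*}
with the $\SO(3)$-invariance~\eqref{eq:InvQ}. The key observation is that the three pieces of data appearing here are exactly the push-forwards from the body of the data on $\Met(\body)$: the metric $\bgamma(\XX)=\pp^{*}\bq\,(\XX)$ is encoded (up to an $\SO(3)$ ambiguity) by $\bF(\XX)$ via $\bgamma = \bq\bF^{t}\bF$; the symmetric part $(\bF_{t}\bF^{-1})^{s}$ is, after pull-back, $\tfrac12\bgamma^{-1}\bgamma_{t}$ (this is the content of Theorem~\ref{theo:metric-strain-rate} and remark~\ref{rem:D-body}, since $(\bF_t \bF^{-1})^s = \widehat{\bd}$ and $\pp^{*}\bd = \bD = \tfrac12\bgamma\bgamma^{-1}\bgamma_t\cdot\ldots$); and $\bepsilon(\XX)$ is just the tangent vector $\widetilde{\pp}^{*}\bk$ at $\bgamma$. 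So I would define
\begin{equation*}
  \Gamma_{\bgamma}(\bgamma_{t},\bepsilon)(\XX) := \left(\text{pull-back of } \Upsilon_{\bF(\XX)}\big((\bF_{t}\bF^{-1})^{s}(\XX),(\pp_{*}\bepsilon)(\XX)\big)\right),
\end{equation*}
and check this is independent of the embedding $\pp$ realizing $\bgamma$.

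**Key steps, in order.** First I would make precise the correspondence between the local data on $\body$ pushed to $\Omega_{\pp}$ and the local data $(\bgamma,\bgamma_{t},\bepsilon)$ on $\Met(\body)$: given $\bgamma\in\Met(\body)$ and $\XX\in\body$, choose any $\bF(\XX)$ with $\bgamma(\XX)=\bq\bF(\XX)^{t}\bF(\XX)$ (a linear embedding of $T_{\XX}\body$ into $\RR^3$); such an $\bF(\XX)$ exists and is unique up to left multiplication by $Q\in\SO(3)$ (polar-type decomposition in $(\RR^{3},\bq)$). Second, I would verify that under $\bF(\XX)\mapsto Q\bF(\XX)$ the argument $(\bF_{t}\bF^{-1})^{s}(\XX)$ transforms by $Q(\cdot)Q^{-1}$ while $\bepsilon(\XX)$ is unchanged when transported correctly, so that the invariance~\eqref{eq:InvQ} is exactly what is needed to make the prescription $\bgamma$-intrinsic. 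Third, I would package this as a bilinear Christoffel operator $\Gamma_{\bgamma}: T_{\bgamma}\Met(\body)\times T_{\bgamma}\Met(\body)\to T_{\bgamma}\Met(\body)$ depending only on $0$-jets of $\bepsilon$, set $D_{t}\bepsilon := \partial_{t}\bepsilon + \Gamma_{\bgamma}(\bgamma_{t},\bepsilon)$, and check it is a genuine covariant derivative (Leibniz in $t$ is immediate since $\Gamma$ is bilinear and $\partial_t$ satisfies Leibniz). Fourth, I would confirm $\widetilde{\pp}_{*}(D_{t}(\widetilde{\pp}^{*}\bk))$ reproduces the original $d_{\widetilde{\pp}}\bk/dt$ by unwinding definitions, using $\widetilde{\pp}_{*}\partial_{t}(\widetilde{\pp}^{*}\bk)=\overset{\triangledown}{\bk}$ and that $\pp^{*}\bq=\bgamma$, $\partial_t(\pp^*\bq)=\bgamma_t=2\pp^*\bd$. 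Finally, the ``moreover'' clause is free: a Christoffel operator depending only on $0$-jets of $\bepsilon$ automatically preserves distributions with density by remark~\ref{rem:local-covariant-derivatives}, via the pointwise adjoint $\Gamma^{\star}_{\bgamma}$, so Theorem~\ref{thm:objectivity-covectors} and the pseudo-Leibniz rule~\eqref{eq:pseudo-Leibniz-rule} apply verbatim.

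**Main obstacle.** The delicate point is the well-definedness of $\Gamma_{\bgamma}$ — i.e.\ that the construction does not depend on the chosen embedding $\pp$ nor on the chosen frame realizing $\bgamma(\XX)$ from $\bF(\XX)$. This is where the $\SO(3)$-invariance~\eqref{eq:final-objectivity-covariance} does all the work, but one must be careful that the ambiguity in $\bF(\XX)$ for a \emph{fixed} $\bgamma$ is precisely $\SO(3)$ acting on the left (the ``spatial'' rotation freedom), matching the condition in the lemma, and that changing $\pp$ among embeddings inducing the same $\bgamma$ changes $\bF$ pointwise exactly by such rotations together with a diffeomorphism of $\body$ in $\Isom(\body,\bgamma)$ — but since $\Gamma$ is defined pointwise in $\XX$ and built from tensorial data, the $\Diff(\body)$-part is harmless. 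A secondary subtlety is bookkeeping the identification of $(\bF_t\bF^{-1})^s$ on $\Omega_{\pp}$ with $\tfrac12\bgamma^{-1}\bgamma_t$ on $\body$: one must track that pulling back the \emph{mixed} tensor $\widehat{\bd}$ gives $\bgamma^{-1}\bD = \tfrac12\bgamma^{-1}\bgamma_t$, so that $\bgamma_t$ (not $\bd$ itself) is the natural argument, and that $\bepsilon=\widetilde{\pp}^*\bk$ is covariant while the Christoffel output must also be covariant — consistent because $\Upsilon$ takes values in ${S^2\RR^3}^{\star}$.
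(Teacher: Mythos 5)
Your proposal is correct, and its core is the same as the paper's: reduce the Christoffel operator via Lemma~\ref{lem:reduction-lemma}, use the $\SO(3)$-invariance~\eqref{eq:final-objectivity-covariance} to show that $\Upsilon$ descends to a Christoffel operator $\Gamma_{\bgamma}(\delta\bgamma,\bepsilon)$ on $T\Met(\body)$, verify that pulling back along $\widetilde{\pp}$ reproduces the given derivative, and get the contravariant statement for free from locality via Remark~\ref{rem:local-covariant-derivatives}. Where you genuinely diverge is the well-definedness/extension step. The paper fixes a reference configuration $\pp_{0}$, writes the polar decomposition $\bF = \bR\,\bF_{0}\bU_{0}$ with $\bU_{0}=\sqrt{\bgamma_{0}^{-1}\bgamma}$, applies the invariance with $Q=\bR^{-1}$ to select the canonical representative $\bF_{0}\bU_{0}$, and then needs Lemma~\ref{lem:extension-lemma} to extend the resulting formula from pairs $\bigl(\pp^{*}\bq,\partial_{t}(\pp^{*}\bq)\bigr)$ to arbitrary $(\bgamma,\delta\bgamma)$. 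You instead quotient by the ambiguity directly: for any $\bgamma(\XX)$ choose any linear $\bF(\XX)$ with $\bF^{\star}\bq\,\bF=\bgamma$, observe that the ambiguity is exactly left multiplication by $Q\in\SO(3)$, under which $\tfrac{1}{2}\bq^{-1}\bF^{-\star}\delta\bgamma\,\bF^{-1}$ transforms by conjugation, so~\eqref{eq:InvQ} gives independence of the choice. This makes the extension to arbitrary metrics automatic (pointwise linear algebra replaces Lemma~\ref{lem:extension-lemma}) and removes the reference configuration from the abstract argument, at the price of not producing the explicit formula of Remark~\ref{rem:D-explicit-formula}. Two small points you should still spell out: smoothness of $\XX\mapsto\Gamma_{\bgamma}(\delta\bgamma,\bepsilon)(\XX)$ requires evaluating with a locally smooth choice of $\bF(\XX)$ (always available, e.g.\ by Gram--Schmidt, or by the paper's $\bF_{0}\bU_{0}$); and your aside about different embeddings inducing the same $\bgamma$ is unnecessary, since the only ambiguity that matters in your pointwise construction is the linear one already covered by~\eqref{eq:InvQ}.
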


\begin{rem}\label{rem:D-explicit-formula}
  The construction of such a covariant derivative on $T\Met(\body)$, provided in the proof of theorem~\ref{thm:converse-objectivity}, is constructive and requires (in the most general case, see the examples of sections \ref{subsec:Green-Naghdi} and \ref{subsec:Xiao-Bruhns-Meyers}) the choice of a reference configuration $\pp_{0}$. It writes $D_{t} \bepsilon = \partial_{t} \bepsilon + \Gamma^{\pp_{0}}_{\bgamma}(\bgamma_{t}, \bepsilon)$, where
  \begin{equation*}
    \Gamma^{\pp_{0}}_{\bgamma}(\bgamma_{t}, \bepsilon)(\XX) = \Upsilon_{\bF_{0} \bU_{0}(\XX)} \left(\frac{1}{2}\bq^{-1}({\bF_{0}}^{-\star}{\bU_{0}}^{-\star}\bgamma_{t}{\bU_{0}}^{-1}{\bF_{0}}^{-1})(\XX), \bepsilon(\XX)\right),
  \end{equation*}
  $\Upsilon$ is defined by~\eqref{eq:def-upsilon}, $\bF_{0}=T\pp_{0}$, $\bgamma_{0}=\pp_{0}^{*} \, \bq$ and $\bU_{0}=\sqrt{\bgamma_{0}^{-1} \bgamma}$ in the Euclidean space $(T_{X}\body,\bgamma_{0})$. In some cases (see the examples of sections \ref{subsec:Oldroyd} to \ref{subsec:Marsden-Hughes}), however, when the dependance of $\Upsilon_{\bF}$ in $\bF$ is through $\bgamma=\bF^\star \bq \, \bF$, rather than $\bF$, then, the construction of $D$ is straightforward and does not require the particular choice of a reference configuration $\pp_{0}$. These two situations will be illustrated in \autoref{sec:objective-rates}.
\end{rem}

Before providing a proof of theorem~\ref{thm:converse-objectivity}, we will introduce the following lemma which, when only local formulas are involved, justifies why they can be extended from Riemannian metrics which write $\bgamma = \pp^{*}\bq$ to all Riemannian metrics defined on the body.

\begin{lem}\label{lem:extension-lemma}
  Let $\XX_{0} \in \body$. Then, for each metric $\bgamma \in \Met(\body)$ and each $\delta\bgamma \in \Gamma(S^{2}T^{\star}\body)$, there exists a path of embedding $\widetilde{\pp}= (\pp(t))$, such that
  \begin{equation*}
    \bgamma(\XX_{0}) = (\pp^{*}\bq)(0,\XX_{0}), \quad \text{and} \quad \delta\bgamma(\XX_{0}) = (\partial_{t} \pp^{*}\bq)(0,\XX_{0}).
  \end{equation*}
\end{lem}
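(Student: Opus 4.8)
The plan is to construct the required path of embeddings explicitly and locally, working in a chart around $\XX_0$. First I would fix a coordinate chart $(U, x^1, x^2, x^3)$ on $\body$ around $\XX_0$, which identifies a neighbourhood of $\XX_0$ with an open set of $\RR^3$, and identifies $T_{\XX_0}\body$ with $\RR^3$. The data to realise are then just: a positive-definite symmetric matrix $\bgamma(\XX_0) = (\gamma_{ij})$ and an arbitrary symmetric matrix $\delta\bgamma(\XX_0) = (\delta\gamma_{ij})$. The key observation is that the map $\mathcal{N}: \pp \mapsto \pp^*\bq$ evaluated at a point is surjective onto positive-definite symmetric matrices: given any such matrix $\bgamma(\XX_0)$, there is a linear isomorphism $\bA : \RR^3 \to \RR^3$ with $\bA^t \bA = \bgamma(\XX_0)$ (e.g. $\bA = \sqrt{\bgamma(\XX_0)}$), and the affine embedding $\pp_0(\xx) = \bA\xx$ (restricted to a small ball, then extended to all of $\body$ as a smooth embedding — possible since $\body$ is compact and we only need to prescribe a $1$-jet at one point) satisfies $(\pp_0^*\bq)(\XX_0) = \bA^t\bq\bA = \bgamma(\XX_0)$, taking $\bq = I$ in the chart's coordinates at the relevant point.

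Next I would perturb this embedding in the time direction. Consider $\pp(t,\xx) := \pp_0(\xx) + t\, \bB\, \xx$ for a matrix $\bB$ to be chosen, so that $\bF(t,\XX_0) = \bA + t\bB$ and hence
\begin{equation*}
  (\pp^*\bq)(t,\XX_0) = (\bA + t\bB)^t(\bA + t\bB) = \bA^t\bA + t(\bA^t\bB + \bB^t\bA) + t^2 \bB^t\bB.
\end{equation*}
Differentiating at $t=0$ gives $(\partial_t \pp^*\bq)(0,\XX_0) = \bA^t\bB + \bB^t\bA$. It remains to solve $\bA^t\bB + \bB^t\bA = \delta\bgamma(\XX_0)$ for $\bB$; since $\bA$ is invertible, the choice $\bB := \tfrac12 (\bA^t)^{-1}\delta\bgamma(\XX_0) = \tfrac12 \bA^{-t}\delta\bgamma(\XX_0)$ works, because then $\bA^t\bB = \tfrac12\delta\bgamma(\XX_0)$ and, $\delta\bgamma(\XX_0)$ being symmetric, $\bB^t\bA = \tfrac12 \delta\bgamma(\XX_0)^t = \tfrac12\delta\bgamma(\XX_0)$, so the sum is $\delta\bgamma(\XX_0)$ as required. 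For $|t|$ small, $\pp(t,\cdot)$ is still an embedding (embeddings form an open set and we can shrink the time interval), which is all that is needed.

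The only genuine subtlety — and the step I expect to require the most care — is the passage from a local construction on a chart to an honest global path of embeddings $\widetilde{\pp}$ of the compact manifold $\body$ into $\espace$: the affine formula above only makes sense in the chart, and one must patch it with a cutoff so that outside a small neighbourhood of $\XX_0$ the map is an unchanged fixed embedding, while inside it has the prescribed $1$-jet at $\XX_0$ at $t=0$. This is standard (multiply the perturbation $t\bB\xx$ by a bump function equal to $1$ near $\XX_0$ and supported in the chart, start from any fixed background embedding of $\body$ agreeing with $\pp_0$ near $\XX_0$, and invoke openness of $\Emb(\body,\espace)$ in $\Cinf(\body,\espace)$ to keep the result an embedding for small $t$); since only the values and first $t$-derivatives of $\pp^*\bq$ \emph{at the single point $\XX_0$} are constrained, the cutoff does not interfere. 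I would write this out carefully but briefly, emphasising that compactness of $\body$ and the Fréchet-manifold structure recalled in \autoref{sec:MMC} make the extension routine.
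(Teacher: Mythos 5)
Your proof is correct, and the core linear algebra is the same as in the paper: choose $\bF(\XX_{0})$ with $\bF^{\star}\bq\,\bF=\bgamma(\XX_{0})$ via a square root, then solve the linearized equation $\bF_{t}^{\star}\bq\,\bF+\bF^{\star}\bq\,\bF_{t}=\delta\bgamma(\XX_{0})$ for $\bF_{t}$ using the invertibility of $\bF$ and the symmetry of $\delta\bgamma$. Where you differ is in how the pointwise data are promoted to an actual path in $\Emb(\body,\espace)$. The paper fixes a reference embedding $\pp_{0}$ and takes $\pp(t)=\bU\exp(t\bA)\circ\pp_{0}$, i.e.\ it post-composes $\pp_{0}$ with a path of \emph{global linear automorphisms} of $\espace$; since an invertible linear map composed with an embedding is again an embedding for every $t$, no localisation, cutoff, or openness argument is needed at all. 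You instead write an affine map in a chart, perturb it additively by $t\,\bB\xx$, and must then patch with a bump function and invoke openness of $\Emb(\body,\espace)$ in $\Cinf(\body,\espace)$ to recover a genuine path of embeddings for small $|t|$. This works — only the $1$-jet at $\XX_{0}$ at $t=0$ is constrained, so the cutoff is harmless, and smallness of $t$ is irrelevant to the conclusion — but it is exactly the ``genuine subtlety'' you flag, and it evaporates if you notice that your own formula can be rewritten as $\pp(t)=(I+t\,\bB\bA^{-1})\circ\pp_{0}$ for a globally defined background embedding $\pp_{0}$ with the prescribed differential at $\XX_{0}$, which is essentially the paper's trick. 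In short: same idea, but the paper's left-composition by an automorphism of $\espace$ buys you a cleaner, cutoff-free global construction.
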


\begin{proof}
  We need to find a path of embedding $\widetilde{\pp}$ such that, for $t=0$ and $\XX=\XX_{0}$, we have
  \begin{equation}\label{eq:pb-gam-gamt}
    \bgamma(\XX_{0}) = \bF(\XX_{0})^{\star}\bq\, \bF(\XX_{0}), \quad \text{and} \quad \delta\bgamma(\XX_{0}) =  \bF_{t}(\XX_{0})^{\star}\bq \, \bF(\XX_{0}) + \bF(\XX_{0})^{\star}\bq \, \bF_{t}(\XX_{0}),
  \end{equation}
  where $\bF(\XX_{0}) = T_{\XX_{0}}p(0)$ and $\bF_{t}(\XX_{0}) = T_{\XX_{0}}p_{t}(0)$. To do so, fix a reference configuration $\pp_{0}$, and set
  \begin{equation*}
    \bC := \bF_{0}(\XX_{0})^{-\star} \bgamma(\XX_{0}) \bF_{0}(\XX_{0})^{-1}.
  \end{equation*}
  Then, $\bC$ is a positive definite, symmetric covariant tensor and we claim that a solution of the problem~\eqref{eq:pb-gam-gamt} is given by the path $\pp(t) = \bU \exp(t\bA) \pp_{0}$, where $\bU$ is the unique positive square root of the symmetric endomorphism $\bq^{-1}\bC$ and $\bA :=\bC^{-1}\bD$, with
  \begin{equation*}
    \bD := \frac{1}{2}\bF_{0}(\XX_{0})^{-\star} \delta\bgamma(\XX_{0}) \bF_{0}(\XX_{0})^{-1}.
  \end{equation*}
  Indeed, we have at $t=0$
  \begin{equation*}
    \bF(\XX_{0}) = \bU\bF_{0}(\XX_{0}), \qquad \bF_{t}(\XX_{0}) = \bU\bA\bF_{0}(\XX_{0}),
  \end{equation*}
  and one can check that
  \begin{equation*}
    \bF^{\star}(\XX_{0})\bq\, \bF(\XX_{0}) = \bgamma(\XX_{0}), \quad \text{and} \quad \bF_{t}(\XX_{0})^{\star}\bq\, \bF(\XX_{0}) + \bF(\XX_{0})^{\star}\bq\, \bF_{t}(\XX_{0}) = \delta\bgamma(\XX_{0}).
  \end{equation*}
\end{proof}

\begin{proof}[Proof of theorem~\ref{thm:converse-objectivity}]
  The following construction requires the choice of a reference configuration $\pp_{0}$, and we will set $\varphi := \pp \circ \pp_{0}^{-1}$. We will first show that the invariance condition~\eqref{eq:final-objectivity-covariance} on $\Upsilon$ in lemma~\ref{lem:reduction-lemma} allows us to build a well-defined bilinear operator $\Gamma^{\pp_{0}}_{\bgamma}(\delta\bgamma,\bepsilon)$ on $T_{\bgamma}\Met(\body)$ such that
  \begin{equation*}
    \Gamma^{\pp_{0}}_{\widetilde{\pp}^{*}\bq}(\partial_{t}(\widetilde{\pp}^{*}\bq),\bepsilon) = \widetilde{\Gamma}_{\pp}(\partial_{t}\pp, \bepsilon),
  \end{equation*}
  for any path of embeddings $\widetilde{\pp} = (\pp(t))$. To do so, we introduce first the polar decomposition of $\bF_\varphi(\XX)=(\bF \bF_{0}^{-1})(\XX)$, with $\bF_{0}=T\pp_0$, and hence we write
  \begin{equation*}
    \bF(\XX) = \bR(\XX)\bU(\XX)\bF_{0}(\XX),
  \end{equation*}
  where $\bR(\XX)$ is a rotation and $\bU(\XX)$ is the unique positive square root of $\bF_\varphi(\XX)^{t}\bF_\varphi(\XX)$. Next, we introduce $\bU_{0}(\XX) := \bF_{0}(\XX)^{-1}\bU(\XX) \bF_{0}(\XX)$, so that
  \begin{equation*}
    \bF(\XX) = \bR(\XX)\bF_{0}(\XX) \bU_{0}(\XX),
  \end{equation*}
  where $\bU_{0}(\XX)$ is the unique positive square root of the positive symmetric endomorphism (relative to the metric $\bgamma_{0}$)
  \begin{equation*}
    \bgamma_{0}^{-1}(\XX) \bgamma(\XX), \quad \text{where} \quad \bgamma := \pp^{*}\bq, \quad \text{and} \quad \bgamma_{0} := \pp_{0}^{*}\,\bq.
  \end{equation*}
  Now, thanks to theorem~\ref{theo:metric-strain-rate}, we have
  \begin{align*}
    (\bF_{t}\bF^{-1})^{s}(\XX) & = \bq^{-1}\bd(\pp(\XX))
    \\
                               & = \frac{1}{2}\bq^{-1}(\bF^{-\star}\bgamma_{t}\bF^{-1})(\XX)
    \\
                               & = \frac{1}{2}\bq^{-1}(\bR^{-\star}{\bF_{0}}^{-\star}{\bU_{0}}^{-\star}\bgamma_{t}{\bU_{0}}^{-1}{\bF_{0}}^{-1}\bR^{-1})(\XX)
    \\
                               & = \frac{1}{2}(\bR\bq^{-1}{\bF_{0}}^{-\star}{\bU_{0}}^{-\star}\bgamma_{t}{\bU_{0}}^{-1}{\bF_{0}}^{-1}\bR^{-1})(\XX)
  \end{align*}
  and by~\eqref{eq:final-objectivity-covariance}, we get thus (with $Q = \bR(\XX)^{-1}$)
  \begin{equation*}
    \Upsilon_{\bF(\XX) }\left((\bF_{t}\bF^{-1})^{s}(\XX),\bepsilon(\XX)\right) = \Upsilon_{\bF_{0} \bU_{0}(\XX)} \left(\frac{1}{2}\bq^{-1}({\bF_{0}}^{-\star}{\bU_{0}}^{-\star}\bgamma_{t}{\bU_{0}}^{-1}{\bF_{0}}^{-1})(\XX) , \bepsilon(\XX)\right).
  \end{equation*}
  Therefore, since $\bU_{0}(\XX)$ is a function of $\bgamma(\XX)$, this allows us to define a bilinear mapping $\Gamma^{\pp_{0}}_{\bgamma}(\bgamma_{t}, \bepsilon)$, using the formula
  \begin{equation*}
    \Gamma^{\pp_{0}}_{\bgamma}(\bgamma_{t}, \bepsilon)(\XX) := \Upsilon_{\bF_{0} \bU_{0}(\XX)} \left(\frac{1}{2}\bq^{-1}({\bF_{0}}^{-\star}{\bU_{0}}^{-\star}\bgamma_{t}{\bU_{0}}^{-1}{\bF_{0}}^{-1})(\XX), \bepsilon(\XX)\right).
  \end{equation*}
  This mapping $\Gamma^{\pp_{0}}_{\bgamma}(\delta\bgamma, \bepsilon)$ is however defined, \textit{a priori}, only for metrics $\bgamma$ on $\Met(\body)$, which write $\bgamma = \pp^{*}\bq$ and with $\delta\bgamma = \partial_{t}(\pp^{*}\bq)$. By lemma \ref{lem:extension-lemma}, for each metric $\bgamma$ and each variation $\delta\bgamma$, there exists a path of embedding $\widetilde{\pp}$, such that
  \begin{equation*}
    \bgamma(\XX) = (\pp^{*}\bq)(0,\XX), \quad \text{and} \quad \delta\bgamma(\XX) = (\partial_{t} \pp^{*}\bq)(0,\XX).
  \end{equation*}
  This allows us to conclude that this local bilinear mapping $\Gamma^{\pp_{0}}_{\bgamma}(\delta\bgamma, \bepsilon)$ extends and is well defined for all metrics $\bgamma$ and all variations $\delta\bgamma$, and defines a covariant derivative
  \begin{equation*}
    D_{t} \bepsilon := \partial_{t}\bepsilon + \Gamma_{\bgamma}^{\pp_{0}}(\bgamma_{t}, \bepsilon)
  \end{equation*}
  on $T\Met(\body)$. We have moreover by construction
  \begin{equation*}
    \Gamma^{\pp_{0}}_{\pp^{*}\bq}\left(\partial_{t}(\pp^{*}\bq),\bepsilon\right) = \widetilde{\Gamma}_{\pp}(\pp_{t},\bepsilon)(\XX).
  \end{equation*}
  To conclude, observe that the corresponding covariant derivative $D_{t}$ on $T\Met(\body)$ preserves the distributions with densities because it is local (see remark~\ref{rem:local-covariant-derivatives}).
\end{proof}

Finally, we have proved so far that every local objective derivative (on covariant second-order symmetric tensors) derives from a covariant derivative on the space of Riemannian metrics $\Met(\body)$
\begin{equation*}
  D_{t}\bepsilon = \partial_{t}\bepsilon + \Gamma_{\bgamma}(\partial_{t}\bgamma,\bepsilon),
\end{equation*}
where $\Gamma$ is a local bilinear operator which depends smoothly (but in general not linearly on $\bgamma$). It writes thus
\begin{equation*}
  \dmat{\widetilde{\pp}}{\bk_{\widetilde{\pp}}} = \overset{\triangledown}{\bk} - 2\pp_{*}\left(\Gamma_{\pp^{*}\bq}(\pp^{*}\bd,\pp^{*}\bk)\right).
\end{equation*}
There are two special cases which describe all existing objective derivatives of the literature. The first one is defined by those objective derivatives which do not depend explicitly on $\pp^{*}$, which means, more precisely, that there exists a local bilinear operator $B_{\bq}$ such that
\begin{equation*}
  \pp_{*}\Gamma_{\pp^{*}\bq}(\pp^{*}\bd,\pp^{*}\bk) = B_{\bq}(\bd,\bk).
\end{equation*}
Examples of such objective derivatives are provided in subsections \ref{subsec:Zaremba-Jaumann} to \ref{subsec:Marsden-Hughes}. The second one depends of the choice of a reference configuration $\pp_{0}$. We apply first a change of variables and substitute $\widehat{\bgamma} := \bgamma_{0}^{-1}\bgamma$ to $\bgamma$, $\widehat{\bgamma}_{t} :=\bgamma_{0}^{-1}\bgamma_{t}$ to $\bgamma_{t}$ and write
\begin{equation*}
  \widetilde{\Gamma}^{\pp_{0}}_{\widehat{\bgamma}}(\widehat{\bgamma}_{t},\bepsilon) := \Gamma_{\bgamma_{0}\widehat{\bgamma}}(\bgamma_{0}\widehat{\bgamma}_{t},\bepsilon).
\end{equation*}
Then, the second case corresponds to those operators $\widetilde{\Gamma}^{\pp_{0}}$ which do not depend explicitly on $\pp^{*}$, that is when there exists a local bilinear operator $B_{\widehat{\bb}}$ such that
\begin{equation*}
  \pp_{*}\widetilde{\Gamma}^{\pp_{0}}_{\pp^{*}\widehat{\bb}}(\pp^{*}\widehat{\bd},\pp^{*}\bk) = B_{\widehat{\bb}}(\widehat{\bd},\bk).
\end{equation*}
Examples of such objective derivatives are the Green--Naghdi objective rate (\autoref{subsec:Green-Naghdi}) and, more generally, the Xiao--Bruhns--Meyers family (\autoref{subsec:Xiao-Bruhns-Meyers}).

\section{Objective rates in the literature}
\label{sec:objective-rates}

In this section, we show that all objective derivatives found in the literature are induced by some covariant derivative on $T\Met(\body)$,
\begin{equation*}
  D_{t} \bepsilon = \partial_{t}\bepsilon + \Gamma_{\bgamma}(\bgamma_{t}, \bepsilon),
\end{equation*}
and we produce an explicit formula for each of them. Since all of these objective derivatives are local, they induce not only an objective derivative on second-order symmetric \emph{covariant} tensor fields but also one on second-order symmetric \emph{contravariant} tensor fields, using the rule
\begin{equation*}
  D_{t}\btheta = \partial_{t}\btheta - \Gamma^{\star}_{\bgamma}(\bgamma_{t},\btheta),
\end{equation*}
where $\Gamma^{\star}_{\bgamma}(\bgamma_{t},\btheta)$ is the adjoint of the linear mapping defined in remark~\ref{rem:local-covariant-derivatives}. Moreover, these two operators are linked by the Leibniz rule
\begin{equation*}
  D_{t} \btheta : \bepsilon + \btheta : D_{t} \bepsilon = \partial_{t} (\btheta : \bepsilon).
\end{equation*}
Translating this rule on space, rather than on the body, using the change of variables $\bk = \pp_{*}\bepsilon$ and $\btau = \pp_{*}\btheta$, we have accordingly the following (pseudo) Leibniz rule for tensor fields defined on space,
\begin{equation*}
  \dmat{\widetilde{\pp}}{\btau} : \bk + \btau : \dmat{\widetilde{\pp}}{\bk} = \partial_{t}(\btau :\bk) + \Lie_{\uu}(\btau :\bk),
\end{equation*}
where $\uu$ is the Eulerian velocity of the path $\widetilde{\pp}$. Finally, we will use the following notations

\begin{equation*}
  \widehat{\bd} = (\nabla \uu)^{s}=\frac{1}{2}\left( \nabla \uu + (\nabla \uu)^{t} \right)= \bq^{-1}\bd,  \qquad
  \widehat{\bw} = (\nabla \uu)^{a}=\frac{1}{2}\left( \nabla \uu - (\nabla \uu)^{t} \right),
\end{equation*}
and indicate that, in this section, $\btau$ is not related to either the Cauchy or the Kirchhoff stress tensor, but denotes just a contravariant symmetric second-order tensor field defined on $\Omega_{\pp}$.

\subsection{Oldroyd objective rate}
\label{subsec:Oldroyd}

It was introduced in~\cite{Old1950}, and sometimes also referred to as the Lie derivative. It writes
\begin{equation*}
  \overset{\triangledown}{\btau} = \partial_{t}\btau + \Lie_{\uu}\btau = \dot{\btau} - (\nabla \uu) \btau - \btau (\nabla \uu)^{\star},
\end{equation*}
and corresponds (by lemma \ref{lem:fundamental-time-derivative-formula}) to the covariant derivative on $T^{\star}\Met(\body)$ induced by the canonical covariant derivative $D_{t}\bepsilon = \partial_{t}\bepsilon$ on $T\Met(\body)$, which preserves distributions with density, and where we have $D_{t}\btheta = \partial_{t}\btheta$.

\subsection{Truesdell objective rate}
\label{subsec:Truesdell}

It was introduced in~\cite{Tru1955} and corresponds to the variant of Oldroyd's derivative discussed in remark~\ref{rem:Truesdell-trick}. It writes
\begin{equation*}
  \overset{\circ}{\btau}  = \dot{\btau} - (\nabla \uu) \btau - \btau (\nabla \uu)^{\star} + (\dive \uu) \btau = \overset{\triangledown}{\btau} + (\tr\widehat \bd)\, \btau.
\end{equation*}
It corresponds to the following covariant derivative on $T\Met(\body)$
\begin{equation*}
  D_{t}\bepsilon = \partial_{t}\bepsilon - \frac{1}{2} \tr (\bgamma^{-1}\bgamma_{t})\bepsilon ,
\end{equation*}
which is not symmetric.

\subsection{Zaremba--Jaumann objective rate}
\label{subsec:Zaremba-Jaumann}

It was introduced in~\cite{Zar1903,Jau1911} (see also~\cite{Lad1980,Lad1999}) and writes
\begin{equation*}
  \overset{\vartriangle}{\btau} = \dot{\btau} - \widehat{\bw} \btau - \btau \widehat{\bw}^{\star} = \overset{\triangledown}{\btau} + \widehat{\bd} \btau + \btau \widehat{\bd}^{\star}.
\end{equation*}
It was Paul Rougée~\cite{Rou1991,Rou1991a,Rou2006} who realized, for the first time, that this objective rate corresponds to the covariant derivative associated with the metric $G^\mu$~\eqref{eq:Rougee-metric} on $\Met(\body)$.

\begin{thm}[Rougée, 1991]\label{thm:Zaremba-Jaumann}
  The Zaremba--Jaumann derivative corresponds to the covariant derivative on $T\Met(\body)$ given by
  \begin{equation*}
    D_{t} \bepsilon := \partial_{t} \bepsilon - \frac{1}{2} \left(\bgamma_{t} \bgamma^{-1} \bepsilon + \bepsilon \bgamma^{-1}\bgamma_{t}\right).
  \end{equation*}
\end{thm}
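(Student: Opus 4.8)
The plan is to verify that the covariant derivative
\begin{equation*}
  D_{t}\bepsilon := \partial_{t}\bepsilon - \frac{1}{2}\left(\bgamma_{t}\bgamma^{-1}\bepsilon + \bepsilon\bgamma^{-1}\bgamma_{t}\right)
\end{equation*}
on $T\Met(\body)$ — which by the earlier theorem in \autoref{subsec:covariant-derivative-on-TMet} is the Levi-Civita connection of the Rougée metric — induces, via the push-forward formula~\eqref{eq:objective-derivatives-cov}, exactly the Zaremba--Jaumann rate $\overset{\vartriangle}{\btau}$ on space. So I would start from the defining relation
\begin{equation*}
  \dmat{\widetilde{\pp}}{\bk} = \widetilde{\pp}_{*}\left(D_{t}(\widetilde{\pp}^{*}\bk)\right) = \overset{\triangledown}{\bk} + \widetilde{\pp}_{*}\left(\Gamma_{\widetilde{\pp}^{*}\bq}(\partial_{t}(\widetilde{\pp}^{*}\bq),\widetilde{\pp}^{*}\bk)\right),
\end{equation*}
where here $\Gamma_{\bgamma}(\bgamma_{t},\bepsilon) = -\tfrac12(\bgamma_{t}\bgamma^{-1}\bepsilon + \bepsilon\bgamma^{-1}\bgamma_{t})$, and show the correction term equals $\widehat{\bd}\btau + \btau\widehat{\bd}^{\star}$ (or the analogous covariant-tensor expression $\widehat{\bd}^{\star}\bk + \bk\widehat{\bd}$, matching the sign convention in \autoref{subsec:Zaremba-Jaumann}).

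The key computational step is to push forward the Christoffel term. By \autoref{theo:metric-strain-rate}, $\partial_{t}(\widetilde{\pp}^{*}\bq) = \bgamma_{t} = 2\,\pp^{*}\bd$, and $\bgamma^{-1} = \pp^{*}(\bq^{-1})$ since pull-back commutes with raising indices through the pulled-back metric. Hence, using that $\pp^{*}$ is an algebra homomorphism for the tensor contractions appearing here,
\begin{equation*}
  \Gamma_{\bgamma}(\bgamma_{t},\widetilde{\pp}^{*}\bk) = -\frac{1}{2}\left( 2(\pp^{*}\bd)(\pp^{*}\bq^{-1})(\pp^{*}\bk) + (\pp^{*}\bk)(\pp^{*}\bq^{-1})\,2(\pp^{*}\bd)\right) = -\pp^{*}\!\left(\bd\bq^{-1}\bk + \bk\bq^{-1}\bd\right).
\end{equation*}
Pushing forward by $\widetilde{\pp}_{*}$ (the inverse operation) and recalling $\widehat{\bd} = \bq^{-1}\bd$, the correction term becomes $-(\widehat{\bd}^{\star}\bk + \bk\widehat{\bd})$ on covariant tensors. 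Therefore
\begin{equation*}
  \dmat{\widetilde{\pp}}{\bk} = \overset{\triangledown}{\bk} - \widehat{\bd}^{\star}\bk - \bk\widehat{\bd},
\end{equation*}
and the dual statement on contravariant $\btau$, obtained through the Leibniz rule~\eqref{eq:pseudo-Leibniz-rule} (equivalently by the adjoint formula $D_{t}\btheta = \partial_{t}\btheta - \Gamma^{\star}_{\bgamma}(\bgamma_{t},\btheta)$ of remark~\ref{rem:local-covariant-derivatives}), reads $\dmat{\widetilde{\pp}}{\btau} = \overset{\triangledown}{\btau} + \widehat{\bd}\btau + \btau\widehat{\bd}^{\star}$, which is precisely $\overset{\vartriangle}{\btau}$. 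One should also note that this $\Gamma_{\bgamma}$ is manifestly local (it depends on $\bepsilon$ only through its $0$-jet), so the machinery of \autoref{thm:objectivity-covectors} applies and the induced derivative on contravariant tensors is well defined.

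The only genuine subtlety — rather than an obstacle — is bookkeeping with the flat/sharp conventions: one must be careful that $\widehat{\bd}^{\star}$ (the transpose relative to $\bq$) is what appears when acting on covariant tensors, that $\widehat{\bd}$ is symmetric so $\widehat{\bd}^{\star} = \widehat{\bd}$ as an endomorphism but the placement (left versus right multiplication) matters for tensorial type, and that the identity $\overset{\triangledown}{\bk} = \partial_{t}\bk + \Lie_{\uu}\bk$ for covariant tensors is consistent with \eqref{eq:canonical-material-derivative} via lemma~\ref{lem:fundamental-time-derivative-formula}. Also worth recording is that, since the Rougée covariant derivative is symmetric and metric-compatible, the resulting objective rate is of ``co-rotational type'' in the classical terminology; but as emphasized in the introduction, this dichotomy dissolves once one works on $\Met(\body)$. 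No deep step is required: the theorem is essentially a translation, through $\widetilde{\pp}_{*}$ and $\widetilde{\pp}^{*}$, of the already-established formula~\eqref{eq:Rougee-covariant-derivative} for the Levi-Civita connection of $G^{\mu}$.
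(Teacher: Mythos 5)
Your proposal is correct and follows essentially the same route as the paper: both rest on theorem~\ref{theo:metric-strain-rate} ($\pp_{*}\bgamma_{t}=2\bd$), the compatibility of push-forward with contraction, and the Leibniz-rule duality between the covariant and contravariant versions. The only (immaterial) difference is the order of the two commuting steps — you push forward the covariant formula first and then dualize on space, whereas the paper first dualizes on the body to get $D_{t}\btheta = \partial_{t}\btheta + \frac{1}{2}\left(\btheta\bgamma_{t}\bgamma^{-1} + \bgamma^{-1}\bgamma_{t}\btheta\right)$ and then pushes that forward.
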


\begin{proof}
  From the Leibniz rule, we get immediately
  \begin{equation*}
    D_{t}\btheta := \partial_{t}\btheta + \frac{1}{2} \left( \btheta \bgamma_{t}\bgamma^{-1} + \bgamma^{-1}\bgamma_{t}\btheta \right),
  \end{equation*}
  and thus
  \begin{equation*}
    \pp_{*}\left(D_{t}(\pp^{*}\btau)\right) = \overset{\triangledown}{\btau} + \widehat{\bd} \btau + \btau \widehat{\bd}^{\star},
  \end{equation*}
  because $\pp_{*}\bgamma_{t} = 2\bd$ (theorem~\ref{theo:metric-strain-rate}), $\pp_{*}\bgamma^{-1} = \bq^{-1}$, $\widehat{\bd} = \bq^{-1}\bd$ and $\widehat{\bd}^{\star} = \bd\bq^{-1}$.
\end{proof}

\subsection{Hill objective rates}
\label{subsec:Hill}

Hill~\cite{Hil1978} has introduced the following family of objective derivatives
\begin{equation}\label{eq:Hill-family}
  \begin{aligned}
    \dmat{\widetilde{\pp}}{\btau} & = \dot{\btau} - \left(\widehat{\bw}+m_1 \widehat \bd+m_2\tr( \widehat \bd) \, \Id \right) \btau - \btau \left(\widehat{\bw} + m_1 \widehat \bd+m_2 \tr( \widehat \bd)\, \Id \right)^{\star}
    \\
                                  & = \overset{\triangledown}{\btau} - \left((m_1-1) \widehat \bd + m_2\tr( \widehat \bd) \, \Id \right) \btau - \btau \left((m_1-1)\widehat \bd+m_2 \tr( \widehat \bd)\, \Id \right)^{\star},
  \end{aligned}
\end{equation}
where the two-parameters $m_1$, $m_2$ are real numbers. It contains the Zaremba--Jaumann derivative (for $m_1=m_2=0$), the Oldroyd derivative (for $m_1=1$, $m_2=0$), and the Truesdell derivative (for $m_1=1$ and $m_2=-1/2$). Hill's objective rate corresponds to the following covariant derivative on $T\Met(\body)$,
\begin{equation}\label{eq:Hill-family-D}
  D_{t}\bepsilon = \partial_{t}\bepsilon
  + \frac{1}{2}(m_1-1) \left(\bgamma_{t} \bgamma^{-1} \bepsilon + \bepsilon \bgamma^{-1}\bgamma_{t}\right)
  + m_2 \tr (\bgamma^{-1}\bgamma_{t})\bepsilon.
\end{equation}

\subsection{Fiala objective rate}
\label{subsec:Fiala}

In the same way Rougée~\cite{Rou1991,Rou1991a} derived the Zaremba-Jaumann objective rate from the covariant derivative associated with the metric~\eqref{eq:Rougee-metric}, Fiala proposed in~\cite{Fia2004} a new objective derivative for symmetric second-order covariant tensor fields, which writes
\begin{equation}\label{eq:Fiala-derivative}
  \dmat{\widetilde{\pp}}{\bk} := \dot \bk + \bk \widehat{\bw}+  \widehat{\bw}^{\star} \bk+ \frac{1}{2} \big(
  (\tr \widehat{\bd})\,\bk + \tr(\bq^{-1}\bk)\,\bd - \tr(\widehat{\bd} \bq^{-1} \bk)\, \bq \big).
\end{equation}
It derives from the covariant derivative~\eqref{eq:Ebin-covariant-derivative} associated with Ebin's metric~\eqref{eq:Ebin-metric}. Since this covariant derivative is local, it induces (using pseudo Leibniz rule~\eqref{eq:pseudo-Leibniz-rule}) an objective derivative for symmetric second-order contravariant tensor fields $\btau$, which writes
\begin{equation}\label{eq:Fiala-contravariant}
  \begin{aligned}
    \dmat{\widetilde{\pp}}{\btau} & = \dot{\btau} - \widehat{\bw}\btau - \btau \widehat{\bw}^{\star} + \frac{1}{2}\left(\tr(\btau \bq) \widehat{\bd} \bq^{-1} - \tr(\widehat{\bd})\btau - \tr(\btau \bd)\bq^{-1} \right)
    \\
                                  & = \overset{\triangledown}{\btau} + \widehat{\bd}\btau + \btau \widehat{\bd}^{\star}
    + \frac{1}{2}\left(\tr(\btau \bq) \widehat{\bd} \bq^{-1} - \tr(\widehat{\bd})\btau - \tr(\btau \bd)\bq^{-1} \right)
  \end{aligned}
\end{equation}

\begin{rem}
  By remark~\ref{rem:Truesdell-trick}, and since $\tr(\widehat \bd)=\dive \uu$, we define another objective rate by
  \begin{equation}\label{eq:Fiala-contravariant-Truesdell}
    \frac{\bar{d}_{\widetilde{\pp}}\btau}{dt} := \rho \, \dmat{\widetilde{\pp}}{}\left(\frac{\btau}{\rho}\right) = \dot \btau
    - \widehat{\bw}\btau - \btau \widehat{\bw}^{\star} + \frac{1}{2}\left(\tr(\bq \btau) \widehat{\bd}\bq^{-1} + \tr(\widehat{\bd})\btau -\tr( \btau \bd)\bq^{-1} \right),
  \end{equation}
  to which corresponds the covariant derivative~\eqref{eq:Ebin-covariant-derivative} with the additional term $-\frac{1}{2} \tr (\bgamma^{-1}\bgamma_{t})\bepsilon$.
\end{rem}

\subsection{Marsden--Hughes objective rates}
\label{subsec:Marsden-Hughes}

In~\cite[Chapter 1, Box 6.1]{MH1994}, Marsden and Hughes claimed that all objective rates of second-order tensor fields are in fact Lie derivatives. More precisely, they defined the following basic objective rates\footnote{Note that what are denoted by $\mathcal L_{\vv}\bsigma^{2}$ and $\mathcal L_{\vv}\bsigma^{3}$ in~\cite[Chapter 1, Box 6.1]{MH1994} are not symmetric second-order tensors, and a mean of them is required in order to build an objective derivative on symmetric contravariant second-order tensors. This is our second objective derivative $d^{2}$.} and asserted that each objective rate of a contravariant symmetric second-order tensor is a linear combination of such derivatives and of the variant explained in remark~\ref{rem:Truesdell-trick},
\begin{align*}
  \frac{d^{1}_{\widetilde{\pp}}\btau}{dt} & := \partial_{t}\btau + \Lie_{\uu}\btau = \dot{\btau} - (\nabla \uu) \btau - \btau (\nabla \uu)^{\star},
  \\
  \frac{d^{2}_{\widetilde{\pp}}\btau}{dt} & := \partial_{t}\btau + \frac{1}{2}\left\{\Lie_{\uu}(\btau\bq)\bq^{-1} + \bq^{-1}\Lie_{\uu}(\bq\btau)\right\} = \dot{\btau} - \widehat{\bw} \btau - \btau \widehat{\bw}^{\star},
  \\
  \frac{d^{3}_{\widetilde{\pp}}\btau}{dt} & := \partial_{t}\btau + \bq^{-1}\Lie_{\uu}(\bq\btau\bq)\bq^{-1} = \dot{\btau} + (\nabla \uu)^{t}\btau + \btau((\nabla \uu)^{t})^{\star},
  \\
  \frac{d^{4}_{\widetilde{\pp}}\btau}{dt} & := \rho \frac{d^{1}_{\widetilde{\pp}}}{dt}\left(\frac{\btau}{\rho}\right) = \dot{\btau} - (\nabla \uu) \btau - \btau (\nabla \uu)^{\star} + (\dive \uu) \btau,
\end{align*}
where $\rho$ is the mass density. These calculations are done using the formulas of proposition~\ref{prop:Lie-derivative-versus-covariant-derivative} and the fact that a covariant or a contravariant second-order tensor $\bt$ is symmetric if and only if $\bt^{\star} = \bt$.

\begin{rem}
  Note that the first objective derivative $d^{1}$ is just the Oldroyd derivative (\autoref{subsec:Oldroyd}), the second one $d^{2}$ is the Zaremba--Jaumann derivative (\autoref{subsec:Zaremba-Jaumann}) and the fourth one $d^{4}$ is the Truesdell derivative (\autoref{subsec:Truesdell}).
\end{rem}

These four objective derivatives correspond respectively to the following covariant derivatives on $T\Met(\body)$:
\begin{align*}
  D_{t}^{1}\bepsilon & = \partial_{t}\bepsilon,
  \\
  D_{t}^{2}\bepsilon & = \partial_{t}\bepsilon - \frac{1}{2} \left(\bgamma_{t} \bgamma^{-1} \bepsilon + \bepsilon \bgamma^{-1}\bgamma_{t}\right),
  \\
  D_{t}^{3}\bepsilon & = \partial_{t}\bepsilon - \left(\bgamma_{t} \bgamma^{-1} \bepsilon + \bepsilon \bgamma^{-1}\bgamma_{t}\right),
  \\
  D_{t}^{4}\bepsilon & = \partial_{t}\bepsilon - \frac{1}{2} \tr (\bgamma^{-1}\bgamma_{t})\bepsilon.
\end{align*}
These four expressions are not linearly independent. A covariant derivative $D_{t}\bepsilon = \partial_{t}\bepsilon + \Gamma_{\bgamma}(\bgamma_{t},\bepsilon)$ on $T\Met(\body)$ is a linear combination of $D_{t}^{1}\bepsilon$, $D_{t}^{2}\bepsilon$, $D_{t}^{3}\bepsilon$, $D_{t}^{4}\bepsilon$, if and only if there exists real constants $\alpha$ and $\beta$ such that
\begin{equation*}
  \Gamma_{\bgamma}(\bgamma_{t},\bepsilon) = \alpha \left(\bgamma_{t} \bgamma^{-1} \bepsilon + \bepsilon \bgamma^{-1}\bgamma_{t}\right) + \beta \tr (\bgamma^{-1}\bgamma_{t})\bepsilon.
\end{equation*}

\begin{rem}
  One will note that Marsden--Hughes objective rates family coincides with Hill's family~\eqref{eq:Hill-family}--\eqref{eq:Hill-family-D}.
\end{rem}

Furthermore, it is clear that Fiala's objective derivative (\autoref{subsec:Fiala}), which corresponds to the covariant derivative~\eqref{eq:Ebin-covariant-derivative}, cannot be written this way. Marsden and Hughes' claim is therefore false. Many more objective rates can be built which are not Lie derivatives, for instance, all those which depend explicitly on the choice of a reference configuration $\pp_{0}$.

\subsection{Green--Naghdi objective rate}
\label{subsec:Green-Naghdi}

It was introduced in~\cite{GN1965}, and defined using a reference configuration $\pp_{0}: \body \to \Omega_{0}$. Introducing the deformation $\varphi := \pp \circ \pp_{0}^{-1}$, we write $\bF_{\varphi} = T\varphi = \bR \bU$, where $\bR$ is a rotation and $\bU$ is the unique positive square root of the mixed right Cauchy--Green tensor $\bq^{-1} \bC$, where $\bC = \bF_{\varphi}^{\star}\bq\, \bF_{\varphi}$. Then the Green-Naghdi objective rate is defined as
\begin{equation}\label{eq:Green-Naghdi}
  \overset{\Box}{\btau} := \dot{\btau} - \btau \bomega^{\star} - \bomega \btau, \qquad \bomega := \bR_{t}\bR^{-1}.
\end{equation}
This objective derivative is local and derives thus from a covariant derivative on $T\Met(\body)$, according to theorem~\ref{thm:converse-objectivity}. The derivation of this covariant derivative follows the lines of the proof of theorem~\ref{thm:converse-objectivity} and illustrates its effectiveness.

\begin{thm}
  The Green-Naghdi derivative~\eqref{eq:Green-Naghdi} corresponds to the covariant derivative
  \begin{equation*}
    D_{t} \bepsilon := \partial_{t} \bepsilon - \bepsilon \left(\bU_{0}^{-1} {\bL_{\bU_{0}}}^{-1}(\bgamma_{0}^{-1}\bgamma_{t})\right) - \left(\bU_{0}^{-1} {\bL_{\bU_{0}}}^{-1}(\bgamma_{0}^{-1}\bgamma_{t})\right)^{\star}\bepsilon
  \end{equation*}
  on $T\Met(\body)$, where $\bU_{0}(\XX)$ is the unique positive square root of the positive symmetric endomorphism $\bgamma_{0}^{-1}\bgamma$ of the Euclidean space $(T_{\XX}\body, \bgamma_{0}(\XX))$ and
  \begin{equation*}
    \bL_{\bU_{0}}: \: \End_{s}(T_{\XX}\body) \to \End_{s}(T_{\XX}\body), \qquad \bS \mapsto \bU_{0}\bS + \bS\bU_{0}.
  \end{equation*}
\end{thm}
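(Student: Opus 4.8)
The plan is to run, for this particular rate, the constructive argument of the proof of theorem~\ref{thm:converse-objectivity}; essentially all the work lies in identifying the right ingredients. First I would rewrite~\eqref{eq:Green-Naghdi} against the ``Oldroyd origin''~\eqref{eq:Olroyd-choice}. Writing $\bF_{\varphi}=\bR\bU$ for the polar decomposition and $\bomega=\bR_{t}\bR^{-1}$, differentiation gives the classical kinematic identity $\nabla\uu=\bomega+\bR(\partial_{t}\bU)\bU^{-1}\bR^{-1}$; hence, setting $\bP:=\nabla\uu-\bomega=\bR(\partial_{t}\bU)\bU^{-1}\bR^{-1}$, the rate~\eqref{eq:Green-Naghdi} reads $\overset{\Box}{\btau}=\overset{\triangledown}{\btau}+\bP\btau+\btau\bP^{\star}$, and dually (by the pseudo-Leibniz rule~\eqref{eq:pseudo-Leibniz-rule}) the rate it induces on symmetric covariant tensors is $\overset{\triangledown}{\bk}-\bk\bP-\bP^{\star}\bk$. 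So, in the notation of~\eqref{eq:Olroyd-choice}, the Christoffel-type operator of the Green--Naghdi rate is $\Gamma_{\pp}(\pp_{t},\bk)=-\bk\bP-\bP^{\star}\bk$; it is objective (since the Oldroyd and Green--Naghdi rates are) and local, the reference embedding $\pp_{0}$ entering as fixed data through the polar decomposition.

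Next I would pull $\Gamma_{\pp}$ back to the body by $\pp_{0}$, following the lines of the proof of theorem~\ref{thm:converse-objectivity}. The crucial point is that this pull-back absorbs the rotation: since $\bF_{\varphi}=(T\pp)(T\pp_{0})^{-1}$ and $\bR=\bF_{\varphi}\bU^{-1}$, setting $\bU_{0}:=(T\pp_{0})^{-1}\bU(T\pp_{0})$ --- so that $\bU_{0}^{2}=\pp_{0}^{*}(\bq^{-1}\bC)=\bgamma_{0}^{-1}\bgamma$ by remark~\ref{rem:CG-pullbacks}, i.e.\ $\bU_{0}=\sqrt{\bgamma_{0}^{-1}\bgamma}$ --- one has $(T\pp)^{-1}\bR(T\pp_{0})=(T\pp_{0})^{-1}\bU^{-1}(T\pp_{0})=\bU_{0}^{-1}$, and therefore
\begin{equation*}
  (T\pp)^{-1}\bP(T\pp)=(T\pp_{0})^{-1}\bU^{-1}(\partial_{t}\bU)(T\pp_{0})=\bU_{0}^{-1}\partial_{t}\bU_{0},
\end{equation*}
the time derivative passing through $(T\pp_{0})$ and $(T\pp_{0})^{-1}$ because $\pp_{0}$ is fixed. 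Consequently, writing $\bk=\pp_{*}\bepsilon$ and transporting $\Gamma_{\pp}$ to $\body$,
\begin{equation*}
  \Gamma^{\pp_{0}}_{\bgamma}(\bgamma_{t},\bepsilon)=-\bepsilon\,\bA-\bA^{\star}\bepsilon,\qquad\bA:=\bU_{0}^{-1}\partial_{t}\bU_{0},
\end{equation*}
where $(\cdot)^{\star}$ is the transpose on $\body$ obtained from the $\bq$-transpose on $\Omega_{\pp}$.

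It only remains to express $\partial_{t}\bU_{0}$ through $\bgamma_{t}$. Since $\bgamma_{0}$ is fixed, differentiating $\bU_{0}^{2}=\bgamma_{0}^{-1}\bgamma$ gives the Sylvester equation $\bL_{\bU_{0}}(\partial_{t}\bU_{0})=\bU_{0}(\partial_{t}\bU_{0})+(\partial_{t}\bU_{0})\bU_{0}=\partial_{t}(\bU_{0}^{2})=\bgamma_{0}^{-1}\bgamma_{t}$; as $\bU_{0}$ is positive definite, $\bL_{\bU_{0}}$ is invertible, whence $\partial_{t}\bU_{0}=\bL_{\bU_{0}}^{-1}(\bgamma_{0}^{-1}\bgamma_{t})$ and $\bA=\bU_{0}^{-1}\bL_{\bU_{0}}^{-1}(\bgamma_{0}^{-1}\bgamma_{t})$, which is exactly the operator in the statement. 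Since $\bgamma\mapsto\bU_{0}=\sqrt{\bgamma_{0}^{-1}\bgamma}$ is smooth on all of $\Met(\body)$ and $\Gamma^{\pp_{0}}_{\bgamma}$ is pointwise and bilinear in $(\bgamma_{t},\bepsilon)$, the formula $D_{t}\bepsilon=\partial_{t}\bepsilon+\Gamma^{\pp_{0}}_{\bgamma}(\bgamma_{t},\bepsilon)$ is a bona fide covariant derivative on $T\Met(\body)$ --- the passage from metrics of the form $\pp^{*}\bq$ to arbitrary metrics being justified, exactly as in theorem~\ref{thm:converse-objectivity}, by the extension lemma~\ref{lem:extension-lemma}. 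Being local, it preserves distributions with density, so by~\eqref{eq:pseudo-Leibniz-rule} it also induces an objective rate on symmetric contravariant tensors; and the direct check $\pp_{*}(D_{t}(\pp^{*}\bk))=\overset{\triangledown}{\bk}-\bk\bP-\bP^{\star}\bk=\dot\bk+\bk\bomega+\bomega^{\star}\bk$ (using $\pp_{*}\bA=\bP$ and $\nabla\uu-\bP=\bomega$), together with~\eqref{eq:pseudo-Leibniz-rule}, shows that this contravariant rate is precisely~\eqref{eq:Green-Naghdi}.

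The computations are elementary --- the heart of the matter is the identity $\partial_{t}(\bU_{0}^{2})=\bgamma_{0}^{-1}\bgamma_{t}$ and the invertibility of $\bL_{\bU_{0}}$ --- so the main difficulty is bookkeeping: one must keep the three manifolds $\body$, $\Omega_{0}=\pp_{0}(\body)$ and $\Omega_{\pp}=\pp(\body)$ and their tangent maps carefully apart, in particular verifying that $\bR=\bF_{\varphi}\bU^{-1}$ pulls back to $\bU_{0}^{-1}$ (this is what makes the spin $\bomega$ cancel, and is the reason a reference configuration is genuinely needed here, in contrast with the rates of sections~\ref{subsec:Oldroyd}--\ref{subsec:Marsden-Hughes}), and one must keep track of the metric with respect to which the transpose $(\cdot)^{\star}$ is taken.
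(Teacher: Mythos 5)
Your proposal is correct and follows essentially the same route as the paper's own proof: pass to the covariant rate via the pseudo-Leibniz rule, use the polar decomposition to write $\nabla\uu-\bomega=\varphi_{*}(\bU^{-1}\bU_{t})$ so that the spin cancels under pull-back, identify $\pp^{*}(\nabla\uu-\bomega)=\bU_{0}^{-1}\partial_{t}\bU_{0}$ with $\bU_{0}=\sqrt{\bgamma_{0}^{-1}\bgamma}$, and solve the Sylvester equation $\bL_{\bU_{0}}(\partial_{t}\bU_{0})=\bgamma_{0}^{-1}\bgamma_{t}$ using the invertibility of $\bL_{\bU_{0}}$ for positive definite $\bU_{0}$. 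The only (harmless) difference is that you make explicit the appeal to lemma~\ref{lem:extension-lemma} and the preservation of distributions with density, which the paper leaves implicit in this particular proof since they are already contained in theorem~\ref{thm:converse-objectivity}.
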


\begin{proof}
  First, we use the pseudo Leibniz rule~\eqref{eq:pseudo-Leibniz-rule}, to derive the corresponding objective derivative on second-order covariant vector fields
  \begin{equation*}
    \overset{\Box}{\bk} := \dot{\bk} + \bomega^{\star}\bk + \bk \bomega = \overset{\triangledown}{\bk}  + (\bomega -\nabla \uu)^{\star}\bk + \bk(\bomega -\nabla \uu).
  \end{equation*}
  Next, using the notations of \autoref{sec:converse-theorem}, we deduce that
  \begin{equation*}
    \widetilde{\Gamma}_{\pp}(\bV,\bepsilon) = (\pp^{*}(\bomega -\nabla \uu))^{\star}\bepsilon + \bepsilon\pp^{*}(\bomega -\nabla \uu), \qquad \uu = \bV \circ \pp,
  \end{equation*}
  and we have to express $\pp^{*}(\bomega -\nabla \uu)(\XX)$ as a function of $\bgamma(\XX)$ and $\bgamma_{t}(\XX)$ for all $\XX\in \body$, where $\bgamma = \pp^{*}\bq$ is the pull-back of the Euclidean metric $\bq$. To do so, observe that
  \begin{equation*}
    \nabla \uu = (\bF_{\varphi})_{t}\bF_{\varphi}^{-1} = (\bR_{t}\bU + \bR\bU_{t})(\bR\bU)^{-1} = \bR_{t}\bR^{-1} + \bF_{\varphi}(\bU^{-1}\bU_{t})\bF_{\varphi}^{-1} = \bomega + \varphi_{*}(\bU^{-1}\bU_{t}),
  \end{equation*}
  and thus that
  \begin{equation}\label{eq:pourGN}
    \bomega - \nabla \uu = - \varphi_{*}(\bU^{-1}\bU_{t}).
  \end{equation}
  Now, introducing the linear tangent map $\bF_{0} = Tp_{0}$ and $\bU_{0}=p_{0}^{*}\bU$, we have ${\bU_{0}}_{t} = p_{0}^{*} \bU_{t}$, and
  \begin{equation*}
    \pp^{*}(\bomega - \nabla \uu) = \pp_{0}^{*}\varphi^{*}(\bomega - \nabla \uu) = - \pp_{0}^{*}(\bU^{-1}\bU_{t}) = -\bU_{0}^{-1}{\bU_{0}}_{t}.
  \end{equation*}
  We have therefore
  \begin{equation*}
    \widetilde{\Gamma}_{\pp}(\bV,\bepsilon) = - \bepsilon\left(\bU_{0}^{-1} {\bU_{0}}_{t}\right) - \left(\bU_{0}^{-1} {\bU_{0}}_{t}\right)^{\star}\bepsilon,
  \end{equation*}
  where $\bU_{0}^{2}= \bgamma_{0}^{-1}\bgamma$ and $\bU_{0}\,{\bU}_{0t} + {\bU}_{0t}\,\bU_{0} = \bgamma_{0}^{-1}\bgamma_{t}$. Finally, we use the fact that the linear mapping
  \begin{equation}
    \bL_{P} : \mathrm{End}_{s}(E) \to \mathrm{End}_{s}(E), \qquad S \mapsto PS + SP,
  \end{equation}
  defined on the space $\mathrm{End}_{s}(E)$ of symmetric endomorphisms of an Euclidean space $E$ is invertible if $P$ is positive definite. We conclude that $\widetilde{\Gamma}_{\pp}$ induces the following well-defined Christoffel operator on $T\Met(\body)$
  \begin{equation*}
    \Gamma_{\bgamma}^{\pp_{0}}(\bgamma_{t}, \bepsilon) := - \bepsilon \left(\bU_{0}^{-1} {\bL_{\bU_{0}}}^{-1}(\bgamma_{0}^{-1}\bgamma_{t})\right) - \left(\bU_{0}^{-1} {\bL_{\bU_{0}}}^{-1}(\bgamma_{0}^{-1}\bgamma_{t})\right)^{\star}\bepsilon.
  \end{equation*}
\end{proof}

\begin{rem}
  The Green--Naghdi is another example of an objective derivative which is not a Lie derivative as defined in~\autoref{subsec:Marsden-Hughes}, since it depends explicitly on a reference configuration $\pp_{0}$ (through $\bgamma_{0}=\pp^{*} \bq$).
\end{rem}

\subsection{Xiao--Bruhns--Meyers objective rates}
\label{subsec:Xiao-Bruhns-Meyers}

A general family of co-rotational objective derivatives, extending Hill's family,  has been obtained by Xiao and coworkers in~\cite{XBM1998},
\begin{equation}\label{eq:corot}
  \dmat{\widetilde{\pp}}{\btau} = \dot{\btau} - \widehat{\bOmega} \btau - \btau \widehat{\bOmega}^{\star}, \qquad \widehat \bOmega = \widehat{\bw} +  \widehat{\bUpsilon} \big(\widehat{\bb}, \widehat{\bd}\big),
\end{equation}
which recast as
\begin{equation*}
  \dmat{\widetilde{\pp}}{\btau} = \overset{\triangledown}{\btau} - \left( \widehat{\bUpsilon} \big(\widehat{\bb}, \widehat{\bd}\big)-\widehat{\bd} \right)\btau - \btau \left( \widehat{\bUpsilon} \big(\widehat{\bb}, \widehat{\bd}\big)-\widehat{\bd} \right)^{\star},
\end{equation*}
where
\begin{equation}\label{eq:UpsilonXiao}
  \widehat{\bUpsilon} \big(\widehat{\bb}, \widehat{\bd}\big) := \nu_{1}(\widehat{\bb}) \left(\widehat{\bb} \widehat{\bd}\right)^{a} + \nu_{2}(\widehat{\bb}) \left(\widehat{\bb}^2 \widehat{\bd}\right)^{a} + \nu_{3}(\widehat{\bb}) \left(\widehat{\bb}\widehat{\bd}\widehat{\bb}^2\right)^{a},
\end{equation}
$\nu_{k}(\widehat{\bb})$ are functions of the fundamental isotropic invariants of $\widehat{\bb}$, and $\widehat{\bb} = \bb \bq=(\varphi_{*} \bq^{-1})\bq$ is the left Cauchy--Green mixed tensor~\eqref{eq:left-Cauchy-Green-tensor}.

\begin{rem}
  This family contains the Jaumann derivative, the Oldroyd--Lie derivative and the Green-Naghdi derivative. It does not, however, contains the objective derivatives~\eqref{eq:Fiala-contravariant}-\eqref{eq:Fiala-contravariant-Truesdell} of Fiala type.
\end{rem}

This family of objective derivatives is local and induces thus, by the (pseudo) Leibniz rule, the following objective derivatives on second-order covariant tensor fields $\bk$
\begin{equation}\label{eq:corot-k}
  \dmat{\widetilde{\pp}}{\bk} = \dot{\bk} + \bk\, \widehat{\bOmega} + \widehat{\bOmega}^{\star} \bk
  = \overset{\triangledown}{\bk} + \bk\, \left( \widehat{\bUpsilon} \big(\widehat{\bb}, \widehat{\bd}\big)-\widehat{\bd} \right) +  \left( \widehat{\bUpsilon} \big(\widehat{\bb}, \widehat{\bd}\big)-\widehat{\bd} \right)^{\star} \bk.
\end{equation}

\begin{thm}
  The choice of a reference configuration $\pp_{0}$ allows us to recast the Xiao-Bruhns-Meyers objective derivative~\eqref{eq:corot-k} as a covariant derivative on $T\Met(\body)$
  \begin{equation*}
    D_{t} \bepsilon := \partial_{t} \bepsilon + \Gamma^{\pp_{0}}_{\bgamma}(\bgamma_{t},\bepsilon),
  \end{equation*}
  where
  \begin{equation*}
    \Gamma^{\pp_{0}}_{\bgamma}(\bgamma_{t},\bepsilon) := - \frac{1}{2} \left(\bgamma_{t} \bgamma^{-1} \bepsilon + \bepsilon \bgamma^{-1}\bgamma_{t}\right) + \bepsilon L_{\bgamma_{0}^{-1}\bgamma}(\bgamma_{0}^{-1} \bgamma_{t}) + \left(L_{\bgamma_{0}^{-1}\bgamma}(\bgamma_{0}^{-1} \bgamma_{t})\right)^{\star}\bepsilon,
  \end{equation*}
  and
  \begin{equation*}
    L_{\bgamma_{0}^{-1}\bgamma}(\bgamma_{0}^{-1} \bgamma_{t}) := \pp^{*}\widehat{\bUpsilon} \big(\pp_{*}(\bgamma_{0}^{-1}\bgamma) , \frac{1}{2} \pp_{*}(\bgamma^{-1} \bgamma_{t})\big).
  \end{equation*}
\end{thm}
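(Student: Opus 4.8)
The plan is to follow the pattern of the Green--Naghdi computation of \autoref{subsec:Green-Naghdi}, which is the prototype for objective rates depending on a reference configuration. Write $M := \widehat{\bUpsilon}(\widehat{\bb},\widehat{\bd}) - \widehat{\bd}$, so that the covariant form~\eqref{eq:corot-k} of the Xiao--Bruhns--Meyers rate reads $\dmat{\widetilde{\pp}}{\bk} = \overset{\triangledown}{\bk} + \bk M + M^{\star}\bk$. Since $\overset{\triangledown}{\bk} = \pp_{*}\partial_{t}(\pp^{*}\bk)$ is the push-forward of the canonical covariant derivative on $T\Met(\body)$ (the defining property of the Oldroyd rate recalled in \autoref{subsec:Oldroyd}), it suffices to produce a bilinear operator $\Gamma^{\pp_{0}}_{\bgamma}(\bgamma_{t},\bepsilon)$ on $T_{\bgamma}\Met(\body)$ with $\widetilde{\Gamma}_{\pp}(\pp_{t},\bepsilon) = \pp^{*}(\bk M + M^{\star}\bk)$ for $\bepsilon = \pp^{*}\bk$. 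Pulling back through $\pp$ turns the $\bq$-transpose into the $\bgamma$-transpose, so $\widetilde{\Gamma}_{\pp}(\pp_{t},\bepsilon) = \bepsilon\,(\pp^{*}M) + (\pp^{*}M)^{\star}\bepsilon$, and the whole problem reduces to computing $\pp^{*}M$ as a function of $\bgamma$, $\bgamma_{t}$ and $\bgamma_{0} = \pp_{0}^{*}\bq$.

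First I would compute the two pieces of $\pp^{*}M$. By \autoref{theo:metric-strain-rate} one has $\pp_{*}\bgamma_{t} = 2\bd$, hence $\pp^{*}\widehat{\bd} = \pp^{*}(\bq^{-1}\bd) = \tfrac{1}{2}\bgamma^{-1}\bgamma_{t}$; and from~\eqref{eq:left-Cauchy-Green-tensor} together with remark~\ref{rem:CG-pullbacks} one gets $\pp^{*}\widehat{\bb} = \pp^{*}(\bb\bq) = \bgamma_{0}^{-1}\bgamma$. The key point is then that $\pp^{*}$ commutes with the construction~\eqref{eq:UpsilonXiao} of $\widehat{\bUpsilon}$: it is an algebra morphism on mixed tensors intertwining the $\bq$-transpose with the $\bgamma$-transpose and preserving traces, hence the fundamental isotropic invariants of $\widehat{\bb}$ and the coefficients $\nu_{k}(\widehat{\bb})$; therefore $\pp^{*}\widehat{\bUpsilon}(\widehat{\bb},\widehat{\bd})$ is obtained from~\eqref{eq:UpsilonXiao} by replacing $\widehat{\bb}$ with $\bgamma_{0}^{-1}\bgamma$, $\widehat{\bd}$ with $\tfrac{1}{2}\bgamma^{-1}\bgamma_{t}$ and $\bq$-antisymmetrization with $\bgamma$-antisymmetrization. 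This is exactly the object $L_{\bgamma_{0}^{-1}\bgamma}(\bgamma_{0}^{-1}\bgamma_{t}) := \pp^{*}\widehat{\bUpsilon}\big(\pp_{*}(\bgamma_{0}^{-1}\bgamma),\tfrac{1}{2}\pp_{*}(\bgamma^{-1}\bgamma_{t})\big)$ of the statement. I would then check that $L$ is well defined, i.e.\ depends only on $\bgamma_{0}^{-1}\bgamma$ and $\bgamma_{0}^{-1}\bgamma_{t}$ (equivalently on $\bgamma$, $\bgamma_{t}$ and the choice $\pp_{0}$) and not on the embedding $\pp$ realizing $\bgamma = \pp^{*}\bq$: after pulling back, every ingredient is expressed through $\bgamma_{0}^{-1}\bgamma$, $\bgamma^{-1}\bgamma_{t}$ and $\bgamma$, and the residual rotational ambiguity of $\varphi = \pp\circ\pp_{0}^{-1}$ disappears precisely because $\widehat{\bUpsilon}$ is isotropic in $\widehat{\bb}$ --- the situation singled out as ``the second case'' at the end of \autoref{sec:converse-theorem}, after which Lemma~\ref{lem:extension-lemma} extends this local operator to all of $T\Met(\body)$.

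Assembling, $\pp^{*}M = L_{\bgamma_{0}^{-1}\bgamma}(\bgamma_{0}^{-1}\bgamma_{t}) - \tfrac{1}{2}\bgamma^{-1}\bgamma_{t}$, and since $\bgamma^{-1}\bgamma_{t}$ is self-adjoint for $\bgamma$, its contribution $\bepsilon\,(-\tfrac{1}{2}\bgamma^{-1}\bgamma_{t}) + (-\tfrac{1}{2}\bgamma^{-1}\bgamma_{t})^{\star}\bepsilon$ collapses to $-\tfrac{1}{2}(\bgamma_{t}\bgamma^{-1}\bepsilon + \bepsilon\bgamma^{-1}\bgamma_{t})$, which is exactly the Zaremba--Jaumann Christoffel operator of theorem~\ref{thm:Zaremba-Jaumann}; what is left is $\bepsilon\,L_{\bgamma_{0}^{-1}\bgamma}(\bgamma_{0}^{-1}\bgamma_{t}) + \big(L_{\bgamma_{0}^{-1}\bgamma}(\bgamma_{0}^{-1}\bgamma_{t})\big)^{\star}\bepsilon$. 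Hence $\Gamma^{\pp_{0}}_{\bgamma}(\bgamma_{t},\bepsilon)$ has the announced form, $D_{t}\bepsilon := \partial_{t}\bepsilon + \Gamma^{\pp_{0}}_{\bgamma}(\bgamma_{t},\bepsilon)$ is a covariant derivative on $T\Met(\body)$, and by construction $\pp_{*}(D_{t}(\pp^{*}\bk)) = \dmat{\widetilde{\pp}}{\bk}$ is the rate~\eqref{eq:corot-k}. Finally $\Gamma^{\pp_{0}}$ uses only the $0$-jet of $\bepsilon$, so by remark~\ref{rem:local-covariant-derivatives} $D$ preserves distributions with density and, through the pseudo-Leibniz rule~\eqref{eq:pseudo-Leibniz-rule}, gives back the contravariant rate~\eqref{eq:corot}. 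I expect the main obstacle to be that third step: verifying cleanly that $\pp^{*}$ is compatible with the isotropic construction~\eqref{eq:UpsilonXiao} and that the resulting $L$ is independent of $\pp$, together with the routine but error-prone bookkeeping of the $\bq$- versus $\bgamma$-adjoints $(\cdot)^{\star}$ in passing between the spatial and the body descriptions.
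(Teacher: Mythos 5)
Your proposal is correct and follows essentially the same route as the paper's proof: pull back $\widehat{\bd}$ and $\widehat{\bb}$ to $\tfrac{1}{2}\bgamma^{-1}\bgamma_{t}$ and $\bgamma_{0}^{-1}\bgamma$, observe that the isotropic invariants (hence the $\nu_{k}$) and the algebraic construction~\eqref{eq:UpsilonXiao} are compatible with $\pp^{*}$ so that $L_{\bgamma_{0}^{-1}\bgamma}$ is a well-defined local operator, and identify the resulting Christoffel operator as the Zaremba--Jaumann term plus the $L$-terms. The only difference is cosmetic: you make explicit the decomposition $M = \widehat{\bUpsilon}(\widehat{\bb},\widehat{\bd})-\widehat{\bd}$ and the independence of the choice of $\pp$ realizing $\bgamma=\pp^{*}\bq$, points the paper leaves implicit.
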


\begin{proof}
  We use the fact (see remark~\ref{rem:CG-pullbacks}) that $\pp^{*}\bd = \frac{1}{2} \bgamma_{t}$ and $ \pp^{*}\bb = \bgamma_{0}^{-1}$,
  from which we get
  \begin{equation*}
    p^{*}(\widehat{\bd}) = p^{*}(\bq^{-1}\bd) = \frac{1}{2}\bgamma^{-1}\bgamma_{t}, \qquad p^{*} (\widehat{\bb} ) = p^{*}(\bb\bq) = \bgamma_{0}^{-1}\bgamma.
  \end{equation*}
  Besides, $\nu_{k}(\widehat{\bb})$ is a function $f_{k}$ of $\tr \widehat{\bb}$, $\tr(\widehat{\bb}^{2})$, $\tr(\widehat{\bb}^{3})$ so that
  \begin{equation*}
    p^{*} \left(\nu_{k}(\widehat{\bb})\right) = f_{k}(\tr(p^{*}\widehat{\bb}),\tr (p^{*}\widehat{\bb}^{2}),\tr p^{*} (\widehat{\bb}^{3}))\circ \pp
  \end{equation*}
  Therefore, by \eqref{eq:UpsilonXiao}, there exists a local linear operator $L_{\bgamma_{0}^{-1}\bgamma}$, depending smoothly on $\bgamma_{0}^{-1}\bgamma$ such that
  \begin{equation*}
    L_{\bgamma_{0}^{-1}\bgamma}(\bgamma_{0}^{-1} \bgamma_{t}) := \pp^{*} \widehat{\bUpsilon} \big(\pp_{*}(\bgamma_{0}^{-1}\bgamma), \frac{1}{2}\pp_{*}(\bgamma^{-1}\bgamma_{t})\big),
  \end{equation*}
  and using~\eqref{eq:corot-k}, we get
  \begin{equation*}
    p^{*}\left(\dmat{\widetilde{\pp}}{\bk} \right) =D_{t} \bepsilon = \partial_{t}\bepsilon + \Gamma^{\pp_{0}}_{\bgamma}(\bepsilon, \bgamma_{t}),
  \end{equation*}
  where we have set
  \begin{equation*}
    \Gamma^{\pp_{0}}_{\bgamma}(\bgamma_{t},\bepsilon) := - \frac{1}{2} \left(\bgamma_{t} \bgamma^{-1} \bepsilon + \bepsilon\, \bgamma^{-1}\bgamma_{t}\right) + \bepsilon L_{\bgamma_{0}^{-1}\bgamma}(\bgamma_{0}^{-1} \bgamma_{t}) + \left(L_{\bgamma_{0}^{-1}\bgamma}(\bgamma_{0}^{-1} \bgamma_{t})\right)^{\star}\bepsilon.
  \end{equation*}
\end{proof}

\section{Conclusion}
\label{sec:conclusion}

We have enhanced the geometrical framework of Continuum Mechanics, insisting on the role of $\Met(\body)$, the manifold of Riemannian metrics on $\body$. Working on the body $\body$, an abstract manifold with boundary instead of a configuration embedded in Euclidean space, has forced us to take some care when developing the finite strain theory. We have been led to properly define objectivity/material frame indifference of tensors in a rigorous geometric manner as an equivariant section of a certain (infinite dimensional) vector bundle. In this geometric framework, the objectivity of elasticity laws (in the sense of Rougée) becomes a simple theorem and the objective derivatives/rates are interpreted as covariant derivatives on $\Met(\body)$.

As stated in the introduction, the question -- clearly formulated by Marsden and Hughes in~\cite{MH1994} -- of the nature of the objective derivatives is not new. It has been initially addressed by classifying them into two categories: ``Lie type'' and ``co-rotational objective rates'' (see \cite{XBM1998a} for a review). In the second case, a natural Leibniz rule is exhibited, which was proved useful for inelasticity thermodynamic formulation \cite{Lad1980,Lad1999}. Xiao, Bruhns and Meyers \cite{XBM1998} have then derived a rather general expression for co-rotational objective rates. Rougée \cite{Rou1991,Rou1991a,Rou2006} has observed that the Jaumann rate was in fact a covariant derivative on $\Met(\body)$, the manifold of Riemannian metrics on the body, and Fiala provided thereafter a new objective rate that expresses the same way in~\cite{Fia2004}. This observation finally proves to be general. Indeed, by theorem~\ref{thm:objective-derivatives}, each covariant derivative on $\Met(\body)$ induces an objective derivative on covariant symmetric second-order tensor fields, and also on contravariant symmetric second-order tensor fields if it preserves distributions with densities. Conversely, by theorem \ref{thm:converse-objectivity}, each objective derivative that depends only on first jets of the embedding $\pp$ and of the Lagrangian velocity $\VV$ (\emph{i.e.} that are considered as \emph{local} from Continuum Mechanics point of view) recasts as a covariant derivative on $\Met(\body)$. Meanwhile, we have shown that the Oldroyd--Lie rate is the only local objective derivative which is general covariant.

Finally, we have illustrated our work by calculating the expression of the corresponding covariant derivative on $\Met(\body)$ for all objective derivatives found in the literature, in particular Hill's and Xiao--Bruhns--Meyers' families. Some of them require the choice of a reference configuration and others not. We have furthermore observed that Marsden and Hughes' claim that all the objective rates are linear combinations of Lie derivatives is false.

\appendix

\section{Second-order tensors and their interpretations}
\label{sec:second-order-tensors}

Given a real (finite dimensional) vector space $E$ and denoting its dual by $E^{\star}$, we can define four types of second-order tensors on $E$, which we interpret as bilinear mappings.
\begin{enumerate}
  \item $\bb: E \times E \to \RR$, $(\xx,\yy) \mapsto \bb(\xx,\yy)$,
  \item $\bb: E \times E^{\star} \to \RR$, $(\xx,\beta) \mapsto \bb(\xx,\beta)$,
  \item $\bb: E^{\star} \times E \to \RR$, $(\alpha,\yy) \mapsto \bb(\alpha,\yy)$,
  \item $\bb: E^{\star} \times E^{\star} \to \RR$, $(\alpha,\beta) \mapsto \bb(\alpha,\beta)$.
\end{enumerate}
To each of these tensors, we associate, using the convention, of that we call \emph{the second argument}, a linear mapping
\begin{enumerate}
  \item $\tilde{\bb}: E \to E^{\star}$, $\yy \mapsto \bb(\cdot,\yy)$,
  \item $\tilde{\bb}: E^{\star} \to E^{\star}$, $\beta \mapsto \bb(\cdot,\beta)$,
  \item $\tilde{\bb}: E \to (E^{\star})^{\star} = E$, $\yy \mapsto \bb(\cdot,\yy)$,
  \item $\tilde{\bb}: E^{\star} \to (E^{\star})^{\star} = E$, $\beta \mapsto \bb(\cdot,\beta)$,
\end{enumerate}
and if there is no ambiguity, we will not distinguish between $\bb$ and $\tilde{\bb}$ and only use the notation $\bb$. Given a basis $(\ee_{i})$ of $E$, and denoting its dual basis by $(\ee^{i})$, their respective components write $(\tensor{b}{_{ij}})$, $(\tensor{b}{_{i}}{^{j}})$, $(\tensor{b}{^{i}}{_{j}})$ and $(\tensor{b}{^{ij}})$.

Given two vector spaces $E$ and $F$, and a linear mapping $L : E \to F$, its \emph{adjoint} (or \emph{dual linear mapping}) is defined as
\begin{equation*}
  L^{\star}: F^{\star} \to E^{\star}, \qquad \alpha \mapsto \alpha \circ L.
\end{equation*}
Note that $(L^{-1})^{\star} = (L^{\star})^{-1}$ and $(L_{1} \, L_{2})^{\star} = L_{2}^{\star}\, L_{1}^{\star}$.

\begin{rem}
  A second-order covariant, or contravariant, tensor $\bb$ is symmetric if and only if $\bb^{\star} = \bb$.
\end{rem}

When the spaces $E$ and $F$ are respectively equipped with scalar products, noted $\bq_{E}$ and $\bq_{F}$ respectively, the \emph{transpose} $L^{t}: F \to E$ of a linear mapping $L: E \to F$ is defined implicitly by the relation
\begin{equation*}
  \langle L\xx, \yy \rangle_{F} = \langle \xx, L^{t}\yy \rangle_{E}.
\end{equation*}
The following diagram makes clear the relation between $L^{\star}$ and $L^{t}$
\begin{equation*}
  \xymatrix{
  E^{\star}   & F^{\star} \ar[l]_{L^{\star}} \\
  E \ar[u]^{\bq_{E}} \ar[r]^{L}  & F \ar@/^1pc/[l]^{L^{t}} \ar[u]_{\bq_{F}} }
\end{equation*}
and leads to
\begin{equation*}
  \bq_{E}\,L^{t} = L^{\star}\,\bq_{F}.
\end{equation*}

\section{Pull-back and push-forward}
\label{sec:pullback-pushforward}

The fundamental concept of differential geometry that allows to pass from material variables to spatial variables (and vice versa) are the operations of \emph{pull-back} and \emph{push-forward}. For functions, these operations are defined by
\begin{equation*}
  \pp^{*}f = f \circ \pp \quad \text{(pull-back)}, \qquad \pp_{*}F = F \circ \pp^{-1} \quad \text{(push-forward)},
\end{equation*}
where $f \in \Cinf(\Omega,\RR)$ and $F \in \Cinf(\body,\RR)$. For \emph{vector fields}, the following diagram
\begin{equation*}
  \xymatrix{
  T\body \ar[r]^{T\pp} \ar[d]^{\pi}   & T\espace \ar[d]^{\pi} \\
  \body \ar@/^1pc/[u]^{\UU} \ar[r]^{\pp}  & \espace \ar@/_1pc/[u]_{\uu} }
\end{equation*}
leads immediately to the natural definitions
\begin{equation*}
  \pp^{*}\uu = T\pp^{-1} \circ \uu \circ \pp \quad \text{(pull-back)}, \qquad \pp_{*}\UU = T\pp \circ \UU \circ \pp^{-1} \quad \text{(push-forward)}.
\end{equation*}
For \emph{covector fields}, the following diagram
\begin{equation*}
  \xymatrix{
  T^{\star}\body \ar[d]^{\pi}   & T^{\star}\espace \ar[d]^{\pi} \ar[l]_{T\pp^{\star}} \\
  \body \ar@/^1pc/[u]^{\alpha} \ar[r]^{\pp}  & \espace \ar@/_1pc/[u]_{\beta} }
\end{equation*}
leads to the following definitions
\begin{equation*}
  \pp^{*}\beta = T\pp^{\star} \circ \beta \circ \pp \quad \text{(pull-back)}, \qquad \pp_{*}\alpha = (T\pp^{\star})^{-1} \circ \alpha \circ \pp^{-1} \quad \text{(push-forward)}.
\end{equation*}

The pull-back and push-forward operations are inverse to each other, meaning that $\pp^{*} = (\pp_{*})^{-1} = (\pp^{-1})_{*}$. They are easily extended to higher-order contravariant, covariant or mixed tensor fields.

In local coordinate systems, $(X^{I})$ on $\body$ and $(x^{i})$ on $\espace$, where we have set $\pp(\XX) = \xx$, the linear tangent map $T\pp : T\body \to T\espace$ is represented by the square matrix $\bF$ defined by
\begin{equation*}
  \tensor{F}{^{i}}{_{J}} = \frac{\partial x^{i}}{\partial X^{J}},
\end{equation*}
and its dual tangent linear map $T\pp^{\star} : T^{\star}_{\xx}\espace \to T^{\star}_{\XX}\body$, by
\begin{equation*}
  \tensor{(\bF^{\star})}{_{I}}{^{j}} = \frac{\partial x^{j}}{\partial X^{I}}.
\end{equation*}
We will write $\bF^{-\star} := (\bF^{\star})^{-1}=(\bF^{-1})^{\star}$.

\begin{prop}\label{prop:pb-pf-order-1-and-2}
  For tensor fields of order one or two, we have the following expressions.
  \begin{enumerate}
    \item For \emph{contravariant vector fields} $\WW=(W^{I})$, $\ww=(w^{i})$ , we have
          \begin{equation*}
            \pp_{*} \WW = \bF (\WW \circ \pp^{-1}) ,
            \qquad
            \pp^{*} \ww = \bF^{-1} (\ww \circ \pp).
          \end{equation*}
    \item For \emph{covariant vector fields} $\alpha=(\alpha_{I})$, $\beta=(\beta_{i})$, we have
          \begin{equation*}
            \pp_{*} \alpha = \bF^{-\star}(\alpha \circ \pp^{-1}),
            \qquad
            \pp^{*} \beta = \bF^{\star}(\beta \circ \pp).
          \end{equation*}
    \item For \emph{second-order covariant tensor fields} $\bepsilon= (\varepsilon_{IJ})$,  $\bk = (k_{ij})$, we have
          \begin{equation*}
            \pp_{*} \bepsilon = \bF^{-\star} (\bepsilon \circ \pp^{-1}) \bF^{-1},
            \qquad
            \pp^{*} \bk = \bF^{\star} (\bk \circ \pp)  \bF .
          \end{equation*}
    \item For \emph{second-order contravariant tensor fields} $\btheta = (\btheta^{IJ})$ and $\btau = (\btau^{ij})$, we have
          \begin{equation*}
            \pp_{*} \btheta = \bF (\btheta \circ \pp^{-1})\bF^{\star} ,
            \qquad
            \pp^{*} \btau = \bF^{-1} (\btau  \circ \pp) \bF^{-\star}.
          \end{equation*}
    \item For \emph{second-order mixed tensor fields} $\hat{\bT} = ({\hat{T}^{I}}_{\; J})$ and $\hat{\bt} = ({\hat{t}^{i}}_{\, j})$, we have
          \begin{equation*}
            \pp_{*} \hat{\bT} = \bF (\hat{\bT} \circ \pp^{-1})\bF^{-1} ,
            \qquad
            \pp^{*} \hat{\bt} = \bF^{-1} (\hat{\bt} \circ \pp) \bF  .
          \end{equation*}
    \item
          For \emph{second-order mixed tensor fields} $\check{\bT} = ({\check{T}_{I}}^{\,J})$ and $\check{\bt} = ({\check{t}_{i}}^{\; j})$, we have
          \begin{equation*}
            \pp_{*} \check{\bT} = \bF^{-\star} (\check{\bT} \circ \pp^{-1}) \bF^{\star} ,
            \qquad
            \pp^{*} \check{\bt} = \bF^{\star} (\check{\bt} \circ \pp)\bF^{-\star}  .
          \end{equation*}
  \end{enumerate}
\end{prop}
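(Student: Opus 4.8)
The plan is to reduce the whole proposition to the three elementary cases already recorded at the beginning of \autoref{sec:pullback-pushforward}---functions, contravariant vector fields, covariant vector fields---and then to propagate the formulas to order-two tensors using the fact (noted there) that $\pp^{*}$ and $\pp_{*}$ are mutually inverse and commute with tensor products and contractions. Since every second-order tensor field is a finite sum of tensor products of order-one fields (two vector fields, two covector fields, or one of each), the five order-two identities are forced once the two order-one identities are established, and it only remains to check that the matrix expressions on the right-hand sides transform correctly under such products.

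First I would do the order-one cases in coordinates. For a contravariant vector field $\ww$ on $\espace$, the definition $\pp^{*}\ww = T\pp^{-1}\circ\ww\circ\pp$ together with the coordinate form $\tensor{F}{^{i}}{_{J}} = \partial x^{i}/\partial X^{J}$ of $T\pp$ gives componentwise $(\pp^{*}\ww)^{I} = \tensor{(\bF^{-1})}{^{I}}{_{j}}(w^{j}\circ\pp)$, i.e. $\pp^{*}\ww = \bF^{-1}(\ww\circ\pp)$, hence $\pp_{*}\WW = \bF(\WW\circ\pp^{-1})$. For a covector field $\beta$ on $\espace$, the definition $\pp^{*}\beta = T\pp^{\star}\circ\beta\circ\pp$ together with $\tensor{(\bF^{\star})}{_{I}}{^{j}} = \partial x^{j}/\partial X^{I}$ gives $\pp^{*}\beta = \bF^{\star}(\beta\circ\pp)$, hence $\pp_{*}\alpha = \bF^{-\star}(\alpha\circ\pp^{-1})$, using $\bF^{-\star} = (\bF^{\star})^{-1} = (\bF^{-1})^{\star}$.

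For the order-two cases I would use the ``second argument'' convention of \autoref{sec:second-order-tensors} to view each tensor as a linear map between the appropriate pair of the spaces $T\body$, $T^{\star}\body$; pull-back then acts by precomposition on the source slot and postcomposition on the target slot with the relevant one of $T\pp$, $T\pp^{-1}$, $T\pp^{\star}$, $(T\pp^{\star})^{-1}$, according to the variance of each slot. This yields $\pp^{*}\bk = \bF^{\star}(\bk\circ\pp)\bF$ for covariant $2$-tensors, $\pp^{*}\btau = \bF^{-1}(\btau\circ\pp)\bF^{-\star}$ for contravariant ones, and the two conjugations by $\bF$, resp. by $\bF^{-\star}$, for the mixed types; the push-forward formulas follow by inverting. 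The one point that requires genuine care---and the place I expect the only real friction---is keeping the dual map $T\pp^{\star}$ distinct from the metric transpose $T\pp^{t}$ and getting the mixed-tensor index conventions in items (5)--(6) right, so that $\bF^{\star}$ and $\bF^{-\star}$ land in the correct positions. I would pin these down by verifying a scalar contraction, e.g. $\pp^{*}(\btau:\bk) = (\pp^{*}\btau):(\pp^{*}\bk) = \tr\!\big(\bF^{-1}(\btau\circ\pp)\bF^{-\star}\bF^{\star}(\bk\circ\pp)\bF\big) = \tr\!\big((\btau:\bk)\circ\pp\big)$, whose validity depends precisely on the transposes being placed as claimed.
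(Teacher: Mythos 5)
Your proposal is correct and is exactly the routine verification the paper leaves implicit: the proposition is stated without proof as an immediate consequence of the definitions of $\pp^{*}$ and $\pp_{*}$ on vectors and covectors given just above it, together with the ``second argument'' linear-map convention of \autoref{sec:second-order-tensors}. Your coordinate computations for the order-one cases and the pre-/post-composition argument for the order-two cases (including the contraction check fixing the placement of $\bF^{\star}$ versus $\bF^{-\star}$) supply precisely the details the paper omits.
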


\begin{rem}
  The \emph{push-forward} and \emph{pull-back} operations commute with the contraction between covariant and contravariant tensors. So in particular, we get
  \begin{equation*}
    (\pp_{*}\alpha) \cdot (\pp_{*}\WW) = \pp_{*}(\alpha \cdot \WW), \qquad (\pp_{*} \btheta) : (\pp_{*} \bepsilon) = \pp_{*} (\btheta: \bepsilon), \qquad \tr(\pp_{*} \hat{\bT}) = \pp_{*}(\tr \hat{\bT}).
  \end{equation*}
\end{rem}

\section{Lie derivative}
\label{sec:Lie-derivative}

Given a manifold $M$, the \emph{Lie derivative} of a (time-independent) tensor field $\bt \in \mathbb{T}(M)$ corresponds to the infinitesimal version of the pull-back operation. More precisely, let $\uu$ be a vector field on $M$, $\phi(t)$ its flow and $\bt$ be a tensor field on $M$. The \emph{Lie derivative} $\Lie_{\uu} \bt$ of the tensor field $\bt$ with respect to $\uu$ is defined by
\begin{equation*}
  \Lie_{\uu} \bt := \left.\frac{\partial}{\partial t}\right|_{t=0} \phi(t)^{*} \bt .
\end{equation*}
The Lie derivative has the following properties.
\begin{enumerate}
  \item When $\bt = \vv$ is a vector field, $\Lie_{\uu} \vv = [\uu,\vv]$, where $[\uu,\vv]$ is the \emph{Lie bracket} of the two vector fields $\uu$ and $\vv$.
  \item Given a diffeomorphism $\varphi$ of $M$, we have
        \begin{equation}\label{eq:pullback-and-Lie-derivative}
          \varphi^{*} \Lie_{\uu} \bt = \Lie_{\varphi^{*}\uu} \varphi^{*}\bt.
        \end{equation}
  \item At any time $t$ where $\phi(t)$ is defined, we have (note that $\phi(t)^{*} \uu=\uu$)
        \begin{equation}\label{eq:Lie-derivative}
          \frac{\partial}{\partial t}\left(\phi(t)^{*} \bt\right) = \phi(t)^{*} \Lie_{\uu} \bt = \Lie_{\uu} \phi(t)^{*} \bt.
        \end{equation}
  \item Given two vector fields $\uu,\vv$ on $M$, we have
        \begin{equation*}
          \Lie_{[\uu,\vv]} \bt = \Lie_{\uu} \Lie_{\vv} \bt - \Lie_{\vv} \Lie_{\uu} \bt .
        \end{equation*}
\end{enumerate}

Consider now a time dependent vector field $\uu(t)$ on $M$. Its flow $\phi(t,s)$ is defined as the solution a time $t$ of the initial value problem
\begin{equation}\label{eq:Cauchy-problem}
  \dot{c}(t) = \uu(t,c(t)), \qquad c(s)=\xx.
\end{equation}
Then, $\phi(t,s)$ is a local diffeomorphism with inverse $\phi(s,t)$ and we have
\begin{equation*}
  \phi(t,s) = \phi(t,\tau) \circ \phi(\tau,s),
\end{equation*}
as soon as the three mappings are defined. The Lie derivative can be extended to time dependent vector fields as follows.
\begin{equation*}
  \Lie_{\uu(t)} \bt := \left.\frac{\partial}{\partial \tau}\right|_{\tau=t} \phi(\tau,t)^{*} \bt .
\end{equation*}

\begin{rem}\label{rem:Lie-path}
  Let $\widetilde{\varphi} = (\varphi(t))$ be a path of diffeomorphism. Its \emph{Eulerian velocity} is defined as the time dependent vector field $\uu(t) := \partial_{t}\varphi \circ \varphi(t)^{-1}$. Given a (time-independent) tensor field $\bt$, we have
  \begin{equation*}
    \frac{\partial}{\partial t} \left(\varphi(t)^{*} \bt \right)=\varphi(t)^{*} (\Lie_{\uu(t)} \bt)
    \qquad \textrm{and} \qquad
    \frac{\partial}{\partial t} \left(\varphi(t)_{*} \bt \right)=-\Lie_{\uu(t)} (\varphi(t)_{*} \bt).
  \end{equation*}
\end{rem}

The preceding results extend to paths of embeddings between two manifolds $\body$ and $M$ and will be summarized by the following lemma.

\begin{lem}\label{lem:fundamental-time-derivative-formula}
  Let $\widetilde{\pp} = (\pp(t))$ be a path of embeddings, $\uu(t) := (\partial_{t}\pp)\circ \pp(t)^{-1}$ be its (right) Eulerian velocity and $\bt(t)$ be a tensor field defined along $\pp(t)$ (\emph{i.e.} on $\Omega_{p(t)} = \pp(t)(\body)$ and possibly time-dependent). Then
  \begin{equation*}
    \partial_{t}(\pp(t)^{*}\bt(t)) = \pp(t)^{*}\left( \partial_{t}\bt + \Lie_{\uu(t)}\bt(t) \right).
  \end{equation*}
\end{lem}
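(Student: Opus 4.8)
The plan is to reduce the identity to the infinitesimal description of the Lie derivative already recorded in lemma~\ref{lem:Lie-path}, by factoring the path of embeddings $\widetilde{\pp}$, locally in time, through a flow of its Eulerian velocity. First I would fix an arbitrary time $t_{0}$ and regard $\uu(t)$ as a (smooth, time-dependent) vector field defined on a neighbourhood of $\Omega_{\pp(t_{0})} = \pp(t_{0})(\body)$ in $\espace$. Since $\body$ is compact, $\Omega_{\pp(t_{0})}$ is compact, and the non-autonomous flow of $\uu$ is defined on such a neighbourhood for $t$ in a small interval around $t_{0}$; I denote by $\psi(t)$ the resulting two-parameter family of local diffeomorphisms of $\espace$, normalised by $\psi(t_{0}) = \id$ and $\partial_{t}\psi(t) = \uu(t) \circ \psi(t)$.

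The key step is then the observation that, by the very definition of the right Eulerian velocity, $\partial_{t}\pp(t) = \uu(t)\circ\pp(t)$, so that the two curves $t \mapsto \pp(t)$ and $t \mapsto \psi(t)\circ\pp(t_{0})$ solve the same (non-autonomous) ordinary differential equation with the same value $\pp(t_{0})$ at $t = t_{0}$. Uniqueness of solutions yields $\pp(t) = \psi(t)\circ\pp(t_{0})$ for $t$ near $t_{0}$, hence $\pp(t)^{*}\bt = \pp(t_{0})^{*}\bigl(\psi(t)^{*}\bt\bigr)$. Differentiating at $t = t_{0}$, the fixed linear operator $\pp(t_{0})^{*}$ factors out, and the remaining derivative $\partial_{t}|_{t=t_{0}}\bigl(\psi(t)^{*}\bt(t)\bigr)$ splits, by the Leibniz rule in $t$, into two pieces: the derivative of $\psi(t)^{*}$ applied to the frozen tensor $\bt(t_{0})$, which is $\Lie_{\uu(t_{0})}\bt(t_{0})$ by lemma~\ref{lem:Lie-path} read at $t_{0}$ (the velocity of $\psi$ at $t_{0}$ being exactly $\uu(t_{0})$), plus $\psi(t_{0})^{*}(\partial_{t}\bt)(t_{0}) = (\partial_{t}\bt)(t_{0})$ coming from $\psi(t_{0}) = \id$. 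Pulling back by $\pp(t_{0})^{*}$ and letting $t_{0}$ vary gives the announced formula.

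The step I expect to require the most care — indeed the only place where the argument uses something beyond bookkeeping — is the construction of the non-autonomous flow $\psi(t)$: one has to check that $\uu(t) = (\partial_{t}\pp)\circ\pp^{-1}$ is smooth jointly in $(t,\xx)$ (it is, $\pp$ being a smooth path of embeddings), that its flow exists for $|t - t_{0}|$ small uniformly on a neighbourhood of the compact set $\Omega_{\pp(t_{0})}$, and that it is a genuine local diffeomorphism there so that the pull-backs $\psi(t)^{*}$ make sense and satisfy $\psi(t)\circ\pp(t_{0}) = \pp(t)$. All of this is standard once compactness of $\body$ is invoked. Everything else — the factorisation through $\psi$, the extraction of $\pp(t_{0})^{*}$, the product rule in $t$ — is routine, and the time-dependence of $\bt$ enters only through the harmless extra term $\partial_{t}\bt$.
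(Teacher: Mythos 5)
Your argument is correct, but it is genuinely different from the one in the paper. The paper proceeds by direct computation, type by type: first for functions via the chain rule, then for vector fields in a local chart using proposition~\ref{prop:pb-pf-order-1-and-2} and the identity $(\partial_{t}\bF)\bF^{-1} = (\nabla\uu)\circ\pp$, then for covector fields by pairing against a time-independent vector field on $\body$ and invoking the already-established cases, and finally asserting that the same duality argument extends to arbitrary tensors. You instead reduce the embedding case to the diffeomorphism case of lemma~\ref{lem:Lie-path} by factoring $\pp(t) = \psi(t)\circ\pp(t_{0})$ and splitting the $t$-derivative of $\psi(t)^{*}\bt(t)$ into its flow part and its explicit-time part. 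Your route has the advantage of treating all tensor types at once and of making transparent why the answer is $\partial_{t}\bt + \Lie_{\uu}\bt$; the paper's route has the advantage of producing the coordinate identities (notably $(\partial_{t}\bF)\bF^{-1} = (\nabla\uu)\circ\pp$) that are reused elsewhere. One remark on your "step requiring the most care": the non-autonomous flow construction and the ODE-uniqueness argument are unnecessary, and slightly delicate as stated, because $\uu(t)$ is defined only on the moving domain $\Omega_{\pp(t)}$ (a codimension-zero compact submanifold with boundary of $\espace$), so speaking of "the same ODE" on a fixed neighbourhood requires an extension of $\uu$. You can bypass this entirely by \emph{defining} $\psi(t) := \pp(t)\circ\pp(t_{0})^{-1}$, which is manifestly a diffeomorphism from $\Omega_{\pp(t_{0})}$ onto $\Omega_{\pp(t)}$ with $\psi(t_{0}) = \id$ and Eulerian velocity $\partial_{t}\psi\circ\psi^{-1} = \uu$; the rest of your argument then goes through verbatim and the only analytic input is the evaluation of $\partial_{t}\big|_{t=t_{0}}\psi(t)^{*}\bt$ at the identity, which is the Lie derivative.
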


\begin{proof}
  Let $\phi(t,s)$ be the flow of $\uu(t)$. Then we get
  \begin{equation*}
    \pp(s) = \phi(s,t) \circ \pp(t).
  \end{equation*}
  We have thus
  \begin{align*}
    \partial_{t}(\pp(t)^{*}\bt(t)) & = \left.\frac{\partial}{\partial s}\right|_{s=t} (\phi(s,t) \circ \pp(t))^{*} \bt(s)
    \\
                                   & =  \pp(t)^{*} \left.\frac{\partial}{\partial s}\right|_{s=t} \phi(s,t)^{*} \bt(s)
    \\
                                   & =  \pp(t)^{*}\left(\partial_{t}\bt(t) + \Lie_{\uu(t)}\bt(t)\right).
  \end{align*}
\end{proof}

\section{Vector bundles and covariant derivatives}
\label{sec:covariant-derivatives}

The interested reader may consult~\cite{GHL2004,Lan1999,Mic2008,MH1994,SE2020} for complementary points of view on differential geometry. Let $\EE$ be a vector bundle over a manifold $M$. We denote by $\Gamma(\EE)$ the space of smooth sections of $\EE$ and by $\Omega^{k}(M,\EE)$ the space of $k$-forms with values in $\EE$, in other words, the space of sections of the vector bundle $\Lambda^{k}T^{\star}M \otimes \EE$ (in particular, $\Omega^{0}(M,\EE) = \Gamma(\EE)$).

\begin{defn}[Covariant derivative]
  A \emph{covariant derivative} on vector bundle $\EE$ over $M$ is a linear operator
  \begin{equation*}
    \nabla : \Gamma(\EE) \to \Omega^{1}(M,\EE), \qquad \bs \mapsto \nabla \bs ,
  \end{equation*}
  which satisfies the \emph{Leibniz identity}.
  \begin{equation*}
    \nabla (f\bs) = df \otimes \bs + f \, \nabla \bs,
  \end{equation*}
  for any function $f \in \Cinf(M)$ and any section $\bs \in \Gamma(\EE)$.
\end{defn}

\begin{rem}
  The set of all covariant derivatives defined on a given vector bundle $\EE$ has an affine structure. Indeed, given two covariant derivatives $\nabla^{1}$ and $\nabla^{2}$, the difference $\nabla^{2}-\nabla^{1}$ is a section of the vector bundle
  \begin{equation*}
    (T^{\star}M \otimes \EE^{\star}) \otimes \EE.
  \end{equation*}
  Hence, this set is an affine space with associated vector space $\Gamma((T^{\star}M \otimes \EE^{\star}) \otimes \EE)$.
\end{rem}

\begin{rem}\label{rem:canonical-covariant-derivative}
  If a vector bundle $\EE$ with base manifold $M$ and fiber type $E$ (a vector space) is trivializable, meaning that there exists a vector bundle isomorphism
  \begin{equation*}
    \Psi : \EE \to M \times E, \qquad \vv_{x} \mapsto (x,\vv),
  \end{equation*}
  then, each section $\bs$ of $\EE$ corresponds bijectively to a \emph{vector valued function} $S$ defined by
  \begin{equation*}
    S: M \to E, \qquad x \mapsto p_{2} \circ \Psi(\bs(x)),
  \end{equation*}
  where $p_{2}: M \times E$ is the projection onto the second factor. Therefore, there is a canonical covariant derivative associated with this trivialization which is given by
  \begin{equation*}
    (\nabla_{X} \bs)(x) := \Psi^{-1}(x, d_{x}S.X).
  \end{equation*}
\end{rem}

\begin{defn}[Curvature]\label{eq:defcourbure}
  Given a \emph{covariant derivative} $\nabla$ on a vector bundle $\EE$, its \emph{curvature} is the mapping
  \begin{equation*}
    R : \Gamma(\EE) \to \Omega^{2}(M,\EE)
  \end{equation*}
  defined by
  \begin{equation*}
    R(X,Y)\bs := \nabla_{X}\nabla_{Y}\bs - \nabla_{Y}\nabla_{X}\bs - \nabla_{[X,Y]}\bs.
  \end{equation*}
\end{defn}

When $\EE = TM$ is the tangent bundle of a manifold $M$, we define the \emph{torsion} of this covariant derivative, by the formula
\begin{equation*}
  T(X,Y) := \nabla_{X} Y - \nabla_{Y} X - [X, Y], \qquad  X, Y \in \Vect(M),
\end{equation*}
which is a mixed tensor field of type $(1,2)$. The curvature tensor of $\nabla$ is a mixed tensor field of type $(1,3)$, which writes
\begin{equation*}
  R(X,Y)Z = \nabla_{X}\nabla_{Y}Z - \nabla_{Y}\nabla_{X}Z - \nabla_{[X,Y]}Z, \qquad  X, Y, Z \in \Vect(M).
\end{equation*}

\begin{defn}
  A covariant derivative on the tangent bundle $TM$ of a manifold $M$  is \emph{symmetric} if its torsion is zero, that is if
  \begin{equation*}
    \nabla_{\vv} \ww - \nabla_{\ww} \vv = [\vv, \ww], \qquad  \forall \vv, \ww \in \Vect(M),
  \end{equation*}
  where $[\vv, \ww]:=\Lie_{\vv} \ww$ is the Lie bracket of the vector fields $\vv$ and $\ww$.
\end{defn}

\begin{rem}
  One can show the existence on any differential manifold $M$ of a covariant derivative. However, there are an infinite number of such derivatives and none of them play a particular role. On the other hand, if a manifold $M$ has a Riemannian metric $g$, then there is a unique symmetric covariant derivative $\nabla$ such that $\nabla g = 0$ (see for example~\cite[Theorem~2.51]{GHL2004}), this is the \emph{Riemannian covariant derivative}.
\end{rem}

Any covariant derivative on $TM$ induces by the Leibniz rule a covariant derivative on all tensor bundles of $M$. The link between the Lie derivative and a \emph{symmetric} covariant derivative is then recalled in the following theorem.

\begin{prop}\label{prop:Lie-derivative-versus-covariant-derivative}
  Let $M$ be a differential manifold with a \emph{symmetric} covariant derivative $\nabla$. Then we have the following relations:
  \begin{enumerate}
    \item Lie derivative of a function $f$:
          \begin{equation*}
            \Lie_{\uu} f = \nabla_{\uu} f = df.\uu;
          \end{equation*}
    \item Lie derivative of a vector field $\ww = (w^{i})$:
          \begin{equation*}
            \Lie_{\uu} \ww = [\uu, \ww] = \nabla_{\uu} \ww - \nabla_{\ww} \uu;
          \end{equation*}
    \item Lie derivative of a covector field ($1$-form) $\alpha = (\alpha_{i})$:
          \begin{equation*}
            \Lie_{\uu} \alpha = \nabla_{\uu} \alpha + (\nabla \uu)^{\star} \alpha;
          \end{equation*}
    \item Lie derivative of a second-order covariant tensor field, $\bk = (k_{ij})$:
          \begin{equation*}
            \Lie_{\uu} \bk = \nabla_{\uu} \bk + (\nabla \uu)^{\star} \bk + \bk  (\nabla \uu);
          \end{equation*}
    \item Lie derivative of a second-order contravariant tensor field, $\btau = (\tau^{ij})$:
          \begin{equation*}
            \Lie_{\uu} \btau = \nabla_{\uu} \btau - (\nabla \uu) \btau - \btau (\nabla \uu)^{\star};
          \end{equation*}
    \item Lie derivative of a second-order mixed tensor field, $\hat{\bt} = (\tensor{\hat{t}}{^{i}}{_{j}})$:
          \begin{equation*}
            \Lie_{\uu} \hat{\bt} = \nabla_{\uu} \hat{\bt} - (\nabla \uu) \hat{\bt} + \hat{\bt} (\nabla \uu);
          \end{equation*}
    \item Lie derivative of a second-order mixed tensor field, $\check{\bt} = (\tensor{\check{t}}{_{i}}{^{j}})$:
          \begin{equation*}
            \Lie_{\uu} \check{\bt} = \nabla_{\uu} \check{\bt} + (\nabla \uu)^{\star} \check{\bt} - \check{\bt} (\nabla \uu)^{\star}.
          \end{equation*}
  \end{enumerate}
\end{prop}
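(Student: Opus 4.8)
The plan is to establish the seven identities in order of increasing tensor rank, exploiting that both $\Lie_{\uu}$ and $\nabla_{\uu}$ are \emph{derivations} of the tensor algebra which commute with contractions, and that $\nabla$ is by hypothesis torsion-free. First, for a function $f$ the identity $\Lie_{\uu}f=\nabla_{\uu}f=df.\uu$ is immediate, since all three expressions are just the directional derivative of $f$ along $\uu$. Next, for a vector field $\ww$ one combines the remark $\Lie_{\uu}\ww=[\uu,\ww]$ from \autoref{sec:Lie-derivative} with the defining property of a symmetric covariant derivative, $\nabla_{\uu}\ww-\nabla_{\ww}\uu=[\uu,\ww]$, which yields the second formula directly.

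For a covector field $\alpha$ I would pair it with an arbitrary vector field $\ww$, so that $\alpha(\ww)\in\Cinf(M)$, and apply the Leibniz rule for $\Lie_{\uu}$ and for $\nabla_{\uu}$ together with the already-proved function and vector-field cases:
\begin{equation*}
  (\Lie_{\uu}\alpha)(\ww) = \nabla_{\uu}\big(\alpha(\ww)\big) - \alpha(\Lie_{\uu}\ww) = (\nabla_{\uu}\alpha)(\ww) + \alpha(\nabla_{\uu}\ww) - \alpha(\nabla_{\uu}\ww - \nabla_{\ww}\uu) = (\nabla_{\uu}\alpha)(\ww) + \alpha(\nabla_{\ww}\uu).
\end{equation*}
It then remains to recognise the last term: under the adjoint convention of \autoref{sec:second-order-tensors}, $\nabla\uu$ is the mixed tensor $\ww\mapsto\nabla_{\ww}\uu$, whose adjoint $(\nabla\uu)^{\star}$ satisfies $\big((\nabla\uu)^{\star}\alpha\big)(\ww)=\alpha(\nabla_{\ww}\uu)$, giving $\Lie_{\uu}\alpha=\nabla_{\uu}\alpha+(\nabla\uu)^{\star}\alpha$ as claimed.

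Finally, for the four second-order cases I would run the same argument after fully contracting: a covariant tensor $\bk$ is evaluated on a pair of vector fields, a contravariant tensor $\btau$ on a pair of covector fields, and the two mixed types on a (covector, vector) pair in the appropriate slot order. In each case the two Leibniz rules produce, besides the $\nabla_{\uu}$ term, two correction terms; substituting the vector-field identity $\Lie_{\uu}\vv=\nabla_{\uu}\vv-\nabla_{\vv}\uu$ or the covector identity just obtained turns each correction term into a composition of the argument with $\nabla\uu$ or $(\nabla\uu)^{\star}$ acting in one slot, which under the ``second argument'' matrix convention of \autoref{sec:second-order-tensors} is precisely the term $(\nabla\uu)^{\star}\bk$, $\bk(\nabla\uu)$, $-(\nabla\uu)\btau$, $-\btau(\nabla\uu)^{\star}$, etc., appearing in the statement.

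The main obstacle is purely bookkeeping rather than conceptual: one must place $(\nabla\uu)$ versus $(\nabla\uu)^{\star}$ in the correct slot, with the correct sign — the sign being governed by the minus sign in $\Lie_{\uu}\ww=\nabla_{\uu}\ww-\nabla_{\ww}\uu$ and by whether the contracted slot is covariant or contravariant — and keep the adjoint/transpose conventions of \autoref{sec:second-order-tensors} consistent throughout; a component computation in a local chart can be used as an independent check on each formula.
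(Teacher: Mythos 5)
The paper itself gives no proof of this proposition: it is stated in Appendix~D as a recalled standard fact (``is then recalled in the following theorem''), so there is no argument of the authors' to compare yours against. Your proof is correct and complete in its essentials: the base cases (function, vector field) follow from the definition of $\Lie_{\uu}$ and the torsion-free hypothesis; the covector case by pairing with an arbitrary $\ww$ is carried out correctly, and the identification $\big((\nabla\uu)^{\star}\alpha\big)(\ww)=\alpha(\nabla_{\ww}\uu)$ is exactly the adjoint convention of \autoref{sec:second-order-tensors}. The two Leibniz rules you invoke are both licensed by the paper's own appendices (pull-back commutes with contraction, hence so does $\Lie_{\uu}$; and the covariant derivative is extended to all tensor bundles precisely by the Leibniz rule), and the remaining second-order cases are, as you say, routine slot-and-sign bookkeeping of the same kind — e.g.\ for $\hat{\bt}$ one gets $(\Lie_{\uu}\hat{\bt})(\alpha,\ww)=(\nabla_{\uu}\hat{\bt})(\alpha,\ww)-\hat{\bt}\big((\nabla\uu)^{\star}\alpha,\ww\big)+\hat{\bt}(\alpha,\nabla_{\ww}\uu)$, which is item (6). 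No gap.
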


In order to intrinsically define the geodesic equation of a Riemannian manifold, but also the covariant derivative of the Lagrangian velocity, it is necessary to extend the notion of covariant derivative to vector fields (and more generally tensor fields) which are only defined along a mapping $f: K \to M$, between two manifolds $K$ and $M$. The rigorous formulation of such a definition requires first to introduce the notion of \emph{pull-back of a vector bundle}~\cite{Hus1994}.

\begin{defn}\label{def:pullback-bundle}
  Let $\pi : \EE \to M$ be a vector bundle and $f: K \to M$ be a smooth mapping. Then the set
  \begin{equation*}
    f^{*}\EE := \bigsqcup_{k \in K} E_{f(k)} \subset \EE
  \end{equation*}
  is a vector bundle above $K$, referred to as the \emph{pull-back} by $f$ of the vector bundle $\EE$. A section of this bundle is therefore a mapping $s : K \to \EE$, such as $\pi (s(k)) = f(k)$.
\end{defn}

\begin{exam}
  A \emph{vector field defined along a curve} $c : I \to M$ is a curve $X : I \to TM$, such that $X(t) \in T_{c(t)}M$, for any $t \in I$.
\end{exam}

\begin{exam}
  The Lagrangian velocity $\VV(t) : \body \to T\espace$, at time $t$, is a section of the pullback bundle $\pp(t)^{*}T\espace$.
\end{exam}

\begin{prop}\label{prop:pullback-connection}
  Let $\pi : \EE \to M$ be a vector bundle, equipped with a covariant derivative $\nabla$ and $f: K \to M$,
  a smooth mapping. Then, there exists a unique covariant derivative, denoted $f^{*}\nabla$, on the vector bundle $f^{*}\EE$, called the \emph{pull-back of $\nabla$}, and such that
  \begin{equation*}
    (f^{*}\nabla)_{X} (f \circ s) = f^{*}\left( \nabla_{Tf.X} s\right),
  \end{equation*}
  for any section $s$ of $\EE$ and any vector field $X$ on $K$.
\end{prop}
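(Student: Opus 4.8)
The plan is to prove uniqueness first — which already forces an explicit local formula — and then to establish existence by constructing the connection in local trivializations and patching them with a partition of unity.

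\textbf{Uniqueness.} First I would fix a trivializing open set $U \subset M$ carrying a local frame $(\ee_{i})$ of $\EE$, observe that the sections $f^{*}\ee_{i} := \ee_{i} \circ f$ form a local frame of $f^{*}\EE$ over $V := f^{-1}(U)$, and expand an arbitrary section of $f^{*}\EE$ as $\bs = \sum_{i} g^{i}\, f^{*}\ee_{i}$ on $V$, with $g^{i} \in \Cinf(V)$. Any covariant derivative $D$ on $f^{*}\EE$ satisfying the Leibniz rule together with the stated compatibility identity must then obey
\begin{equation*}
  D_{X}\bs = \sum_{i} (X g^{i})\, f^{*}\ee_{i} + \sum_{i} g^{i}\, f^{*}\!\left(\nabla_{Tf.X}\ee_{i}\right), \qquad X \in \Vect(K),
\end{equation*}
which pins down $D_{X}\bs$ on $V$, hence on all of $K$. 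So at most one such $D$ exists, and its value does not depend on the trivialization used to write this formula.

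\textbf{Local existence.} Over $U$ I would write $\nabla \ee_{i} = \sum_{j} \omega_{i}^{j}\otimes \ee_{j}$ with connection one-forms $\omega_{i}^{j} \in \Omega^{1}(U)$ and \emph{define}, for $\bs = \sum_{i} g^{i}\, f^{*}\ee_{i}$ on $V$,
\begin{equation*}
  D^{U}_{X}\bs := \sum_{i} (X g^{i})\, f^{*}\ee_{i} + \sum_{i,j} g^{i}\,(f^{*}\omega_{i}^{j})(X)\, f^{*}\ee_{j}.
\end{equation*}
This is manifestly $\Cinf(V)$-linear in $X$, and the first term produces the Leibniz rule in $\bs$, so $D^{U}$ is a covariant derivative on $f^{*}\EE|_{V}$. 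Substituting $\bs = \bt \circ f$ for a section $\bt = \sum_{i} t^{i}\ee_{i}$ of $\EE$, i.e. taking $g^{i} = t^{i}\circ f$, and using $X(t^{i}\circ f) = (Tf.X)(t^{i})\circ f$, one checks by direct substitution that $D^{U}_{X}(\bt \circ f) = f^{*}\!\left(\nabla_{Tf.X}\bt\right)$ on $V$.

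\textbf{Global patching and the main obstacle.} Next I would cover $M$ by trivializing opens $(U_{\alpha})$, form the connections $D^{\alpha} := D^{U_{\alpha}}$ over $V_{\alpha} := f^{-1}(U_{\alpha})$, choose a partition of unity $(\rho_{\alpha})$ on $K$ subordinate to $(V_{\alpha})$, and set $f^{*}\nabla := \sum_{\alpha} \rho_{\alpha} D^{\alpha}$. A convex combination of covariant derivatives on a fixed vector bundle is again a covariant derivative, since the non-$\Cinf$-linear (Leibniz) defects are weighted by coefficients summing to $1$; and since each $D^{\alpha}$ satisfies the compatibility identity on its domain, so does $f^{*}\nabla$:
\begin{equation*}
  (f^{*}\nabla)_{X}(\bt \circ f) = \sum_{\alpha} \rho_{\alpha}\, f^{*}\!\left(\nabla_{Tf.X}\bt\right) = f^{*}\!\left(\nabla_{Tf.X}\bt\right).
\end{equation*}
Uniqueness then shows that $f^{*}\nabla$ is independent of the cover and of the partition of unity. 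The only genuinely computational point would be the behaviour of the local formula for $D^{U}$ under a change of local frame on overlaps $U_{\alpha}\cap U_{\beta}$ — the classical transformation law of connection one-forms — but the partition-of-unity construction, combined with the uniqueness statement, bypasses the need to verify this by hand, which is why I would organize the proof in exactly this order.
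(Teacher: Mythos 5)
The paper does not actually prove this proposition: it appears in \autoref{sec:covariant-derivatives} as a standard background fact, with the reader referred to the textbooks cited at the head of that appendix. Your argument is the standard one and it is correct: uniqueness via the pulled-back local frame $f^{*}\ee_{i}=\ee_{i}\circ f$ together with the Leibniz rule, local existence via the pulled-back connection one-forms $f^{*}\omega_{i}^{j}$, and globalization. Two small points are worth making explicit. First, the characterizing identity is quantified over sections $s$ of $\EE$, whereas your uniqueness step applies it to the local frame sections $\ee_{i}$ defined only over $U$; this is harmless, but you should say that a covariant derivative is a local operator (by the Leibniz rule and a bump-function argument), so the identity automatically extends to local sections. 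Second, once that is granted, uniqueness already holds over each $V_{\alpha}\cap V_{\beta}$, so the local connections $D^{\alpha}$ agree on overlaps and glue directly; the partition of unity is a valid but strictly unnecessary detour. Finally, note that $\nabla_{Tf.X}s$ must be read pointwise, as the $\EE$-valued one-form $\nabla s$ evaluated on $T_{k}f.X_{k}$, since $Tf.X$ need not be a vector field on $M$; your local computation uses exactly this interpretation, so nothing breaks, but it deserves a sentence.
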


\begin{exam}
  Consider the case where $K = \body$, $M=\espace$, $\EE = T\espace$ is the tangent vector bundle to $\espace$ and
  \begin{equation*}
    f = p: \body \to \espace,
  \end{equation*}
  is an embedding of $\body$ in $\espace$. Consider a local coordinate system $(X^{I})$ on $\body$ and a local coordinate system $(x^{i})$ on $\espace$, where Christoffel's symbols are written $\Gamma_{ij}^{k}$. Let $\VV$ be a section of $p^{*}T\espace$, \emph{i.e.} a mapping
  \begin{equation*}
    \VV : \body \to T\espace, \quad \text{such that} \quad \pi (X(X)) = p(X),
  \end{equation*}
  then, we have
  \begin{equation*}
    [(p^{*}\nabla)_{\partial_{X^{I}}} V]^{k} = \partial_{X^{I}} V^{k} + \Gamma_{ij}^{k} \frac{\partial p^{i}}{\partial_{X^{I}}} V^{j}.
  \end{equation*}
\end{exam}

\begin{exam}
  Let $(M,\bgamma)$ be a Riemannian manifold and $\nabla$ the associated covariant derivative. Let $c : I \to M$ be curve. Then, the pull-back of $\nabla$ by $c$, usually noted $D_{t}$ is defined on the vector space of vector fields defined along $c$. It is characterized by the following properties:
  \begin{enumerate}
    \item for any function $f: I \to \RR$,
          \begin{equation*}
            D_{t}(fX)(t) = f^{\prime}(t)\, X(t) + f(t)\, D_{t}(X)(t);
          \end{equation*}
    \item If $X(t) = \tilde{X}(c(t))$ where $\tilde{X}$ is a vector field on $M$, then
          \begin{equation*}
            (D_{t}X)(t) = (\nabla_{c^{\prime}(t)}\tilde{X})((t)).
          \end{equation*}
  \end{enumerate}
  In a local coordinate system $(x^{i})$ of $M$, we have:
  \begin{equation*}
    (D_{t}X)^{k} = \partial_{t}X^{k} + \Gamma_{ij}^{k} (\partial_{t}x^{i})X^{j}.
  \end{equation*}
\end{exam}

\begin{rem}
  On an infinite dimensional manifold, equipped with a covariant derivative, the curvature is defined along a parameterized surface $c(s,t)$, and writes
  \begin{equation*}
    R(\partial_s, \partial_{t})X = D_{s}D_{t}X - D_{t}D_{s}X .
  \end{equation*}
\end{rem}


\noindent \textbf{Acknowledgements.} It is a pleasure to thank Emmanuelle Rouhaud for stimulating discussions concerning the concept of objective derivative and related to her own work~\cite{RPK2013,PRA2015}. We would also like to thank Boris Desmorat for the day-long scientific discussions that helped initiate the present work.


\end{document}